\definecolor{greenbean}{RGB}{199,237,204}
\newtheorem{thm}{Theorem}[section]
\newtheorem{lemma}[thm]{Lemma}
\newtheorem{prop}[thm]{Proposition}
\newtheorem{cor}[thm]{Corollary}
\newtheorem{Def}[thm]{Definition}
\newtheorem{rmk}[thm]{Remark}
\newcommand{\notion}[1]{\textit{#1}}
\newcommand{\Bigp}[1]{\Big(#1\Big)}
\title{Moduli Spaces of Arrangements of 10 Projective Lines with Quadruple Points \footnotetext{\hspace{-1.8em}2000 Mathematics Subject Classification. 14N20, 32S22, 52C35.\\ Key words and phrases: Line arrangements, moduli spaces, irreducibility
}}
\author{Meirav Amram, Mina Teicher, and Fei Ye}
\date{}
\begin{document}

\maketitle

\abstract{We classify moduli spaces of arrangements of 10 lines with quadruple points.  We show that moduli spaces of arrangements of 10 lines with quadruple points may consist of more than 2 disconnected components, namely 3 or 4 distinct points.  We also present defining equations to those arrangements whose moduli spaces are still reducible after taking quotients of complex conjugations.  }

\section{Introduction}

A {\it line arrangement} $\mathcal{A}$ %=\{H_1, H_2, \dots, H_n\}$
in $\mathbb{CP}^2$ is a finite collection of projective lines.  The complement of the union of lines in $\mathcal{A}$ is denoted as $M(\mathcal{A})$ . We call the set $L(\mathcal{A})=\{\bigcap\limits_{i\in S}L_i | S\subseteq\{1, 2, \dots, n\}\}$ partially ordered by reverse inclusion the \notion{intersection lattice} of $\mathcal{A}$. %Let $\mathcal{A}_1$ and $\mathcal{A}_2$ be  two arrangements of $n$ hyperplanes.
Two line arrangements $\mathcal{A}_1$ and $\mathcal{A}_2$ are lattice isomorphic, denoted as $\mathcal{A}_1\sim \mathcal{A}_2$,  if their intersection lattices $L(\mathcal{A}_1)$ and $L(\mathcal{A}_2)$ are isomorphic, i.e., there is a permutation $\phi$ of $\{1, 2, \dots, n \}$ such that
\[\dim \Big( \bigcap\limits_{i\in S \atop L_i\in \mathcal{A}_1}L_i \Big) =\dim \Bigp{ \bigcap\limits_{j\in \phi(S)\atop H_j\in \mathcal{A}_2}H_j }\]
 for any nonempty subset $S\subseteq\{1,2,\dots, n\}$. In particular, if the permutation $\phi$ is the identity, we denote $L(\mathcal{A})=L(\mathcal{B})$.

An essential topic in hyperplane arrangements theory is to study the interaction between topology of complements and combinatorics of intersection lattices. One may ask how close topology and combinatorics of a given arrangement are related.

For line arrangements, Jiang and Yau \cite{Jiang1998} showed that  homeomorphic equivalence always implies lattice isomorphism.  On the other hand,
in 1989, Randell \cite{Randell1989} proved that if two arrangements are lattice isotopy, i.e. they are connected by a one-parameter family with constant intersection lattice, then their complements are diffeomorphic.  Based on Randell's theorem,  in \cite{Jiang1994} and \cite{Wang2005}, the authors found large classes of line arrangements, called \notion{nice arrangements} and \notion{simple arrangements} respectively, whose intersection lattices determine topology of the complements. The notion of nice line arrangements has been generalized to arrangements of hyperplanes in higher dimensional projective spaces (see \cite{Wang2007a, Wang2008, Yau2007, Yau2009}). Also based on Randell's theorem, Nazir-Yoshinaga \cite{Nazir2010} found new classes of line arrangements whose intersection lattices determine  the topology of the complements.  Unlike nice and simple arrangements whose intersection lattices have special properties, Nazir and Yashinaga's new classes require that all intersection points with multiplicity at least $3$ are in special positions.

However, even for line arrangements, the converse is not true in general.  We call a pair of line arrangements a \notion{Zariski pair}, if they are lattice isomorphic, but the fundamental groups of their complements are different. Note that this definition is stronger than the original definition introduced by Artal Bartolo in \cite{Artal-Bartolo1994}: a pair of lattice isomorphic line arrangements with different embedding type. The first Zariski pair of line arrangements was constructed by Rybnikov in 1998. But the work wasn't published until 2011  \cite{Rybnikov2011}. Each arrangement in Rybinikov's example consists of 13 lines and 15 triple points.   On the other hand,  Garber, Teicher and Vishne \cite{Garber2003} proved that there is no Zariski pair of arrangements of up to 8 real lines which covered the result of Fan \cite{Fan1997} on arrangements of 6 lines. This result was recently generalized to arrangements of 8 complex lines by Nazir and Yoshinaga
 \cite{Nazir2010}. In the same paper,  Nazir and Yoshinaga also claim without proof that there is no Zariski pair of arrangements of 9 complex lines. A complete proof of their claim was presented in \cite{Ye2011}.   For arrangements of 10 lines, it is still not known whether the fundamental groups of the complements are combinatorially  invariant. The existence of ``potential Zariski pairs" has been known (see the arrangements $\mathcal{H}^\pm$ in section 5 of \cite{ArtalBartolo2005} and Example 5.5 of \cite{Nazir2010}).

 \iffalse
 Naturally enough,  one may ask now whether there are Zariski pairs of arrangements of 10 lines. Or, more generally,  what is the minimal number of lines which we have to consider in order to get line arrangements which are Zariski pairs?
 Our paper is devoted to study this question.
\fi

 Let  $\mathcal{A}$ be  a complex line arrangement.  We define the \notion{moduli space} of line arrangements with the fixed lattice $L(\mathcal{A})$ (or simply, the moduli space of $\mathcal{A}$) as \[\mathcal{M}_\mathcal{A}=\{\mathcal{B}\in ((\mathbb{CP}^2)^*)^n | L(\mathcal{B})= L(\mathcal{A})\}/ PGL(3, \mathbb{C}).\]  We denote by $\mathcal{M}^c_{\mathcal{A}}$ the quotient of $\mathcal{M}_{\mathcal{A}}$ under complex conjugation. We note that our moduli space $\mathcal{M}_{\mathcal{A}}$ is called an \notion{ordered} moduli spaces in \cite{ArtalBartolo2005}.
 By Randell's Lattice-Isotopy Theorem in \cite{Randell1989} and Cohen and Suciu's Theorem 3.9 in  \cite{Cohen1997}, we know that arrangements in the same connected component of the moduli space, or in two complex conjugate components can not form Zasiki pairs. Therefore, to investigate the existence of Zariski pairs of arrangements of 10 lines, it is very important to know the geometry of moduli spaces of arrangements.
In fact, results in our paper show that there are many arrangements of 10 lines whose moduli spaces are reducible.  %In other words, there are many potential Zariski pairs of arrangements of 10 lines.

 The paper is structured as follows.  Section 2 provides preliminaries and ideas on classifying moduli spaces of arrangements of 10 lines. Section 3 shows that moduli spaces of arrangements with multiple points of high multiplicity are most likely irreducible. Section~ 4 and Section 5 deal with arrangements of 10 lines with a quadruple point. %These sections are the main part and also the most complicated part of this paper.
 All possible arrangements of 10 lines with quadruple point whose moduli spaces are reducible, for instance, consisting of 2 points, 3 points, 4 points, or 2 one-dimensional components, can be found there.

\paragraph{Acknowledgements:} This work was partially supported by the Oswald Veblen Fund and by the Minerva Foundation of Germany. The authors thank M. Falk and the referee for their comments and suggestions that helped improve the clarity of the paper.

\section{Preliminaries}

Let  $\mathcal{A}=\{L_1, L_2,\cdots, L_n\}$ be a line arrangement in $\mathbb{CP}^2$. We say a singularity of $L_1\cup
L_2\cup \cdots \cup L_n$ is a \notion{multiple point} of $\mathcal{A}$, if it has multiplicity at least 3.

\begin{Def}
A line arrangement $\mathcal{A}$ is said to be {\em $C_{\leq 3}$} if all multiple points of $\mathcal{A}$ are on at most
three lines, say $L_1$, $L_2$ and $L_3$. A  line arrangement is called {\em simple $C_{\leq 3}$} if it is $C_{\leq
3}$ and one of the following conditions holds:
\begin{enumerate}
\item  $L_1\cap L_2\cap L_3\neq \emptyset$, or \item $L_1\cap L_2\cap L_3=\emptyset$ and that one of $L_1$, $L_2$ and $L_3$ contains at most one
multiple point apart from $L_1\cup L_2\cup L_3$.
\end{enumerate}
\end{Def}

We recall the following results  of \cite{Nazir2010}.
\begin{thm}[Theorem 3.5, \cite{Nazir2010}]\label{thm:NY-Main}Let $\mathcal{A}$ be a simple $C_{\leq 3}$ arrangement.
 Then the moduli space $\mathcal{M}_\mathcal{A}$ is irreducible.
\end{thm}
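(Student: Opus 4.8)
The plan is to exhibit the space of realizations of $L(\mathcal{A})$ as a rational variety and to deduce that $\mathcal{M}_{\mathcal{A}}$, being its image under the $PGL(3,\mathbb{C})$-quotient, is irreducible. Write $\mathcal{A}=\{L_1,\dots,L_n\}$ with $L_1,L_2,L_3$ the three lines that carry every multiple point. First I would normalize: since $PGL(3,\mathbb{C})$ acts transitively on ordered triples of concurrent lines and on ordered triples in general position, I may move $L_1,L_2,L_3$ into a fixed standard position and study only the slice $S$ of realizations in which $L_1,L_2,L_3$ are so fixed. Every realization is $PGL(3,\mathbb{C})$-equivalent to one in $S$, so the natural map $S\to\mathcal{M}_{\mathcal{A}}$ is surjective, and it suffices to prove $S$ irreducible. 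The structural fact I would use repeatedly is that, since $\mathcal{A}$ is $C_{\le 3}$, every multiple point lies on $L_1\cup L_2\cup L_3$; hence each remaining line $L_i$ ($i\ge 4$) meets the rest of $\mathcal{A}$ in a multiple point only at its intersections with $L_1,L_2,L_3$, so it passes through at most three multiple points, one on each fixed line, and all its other intersections are simple double points imposing only open non-degeneracy conditions.

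Then I would split along the two clauses of the definition. In the concurrent case, put $O=L_1\cap L_2\cap L_3$ and fix $L_1:y=0$, $L_2:x=0$, $L_3:x=\lambda y$; a multiple point on $L_1,L_2,L_3$ away from $O$ is recorded by one affine parameter, say $\alpha,\beta,\tau$. The only relations among these parameters come from a line $L_i$ forced through three multiple points $[\alpha:0:1]$, $[0:\beta:1]$, $[\lambda\tau:\tau:1]$, which must be collinear. The key computation is that collinearity here reads $1/\tau=\lambda/\alpha+1/\beta$, so in the reciprocal (pencil) coordinates $\tilde a=1/\alpha$, $\tilde b=1/\beta$, $\tilde c=1/\tau$ it becomes the linear relation $\tilde c=\lambda\tilde a+\tilde b$; lines through at most two multiple points impose no relation at all. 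Hence all the required incidences together cut out an affine-linear subspace of the space of reciprocal coordinates, and intersecting with the open locus where the lattice is exactly $L(\mathcal{A})$ (all points distinct, no unwanted incidences) leaves a nonempty open subset of a linear space, which is irreducible. This settles the concurrent case.

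In the triangle case I would fix $L_1:x=0$, $L_2:y=0$, $L_3:z=0$, so that the three vertices are fixed points. Now the linearization is unavailable: a collinearity among a point of $L_1$, a point of $L_2$ and a point of $L_3$ is genuinely bilinear, and this is exactly where the simple hypothesis enters. By assumption $L_3$ carries at most one multiple point $q$ off the vertices, and a line through a vertex meets two of the fixed lines at that same fixed point and so contributes no third, independent incidence; therefore every line $L_i$ meeting all three fixed lines in multiple points must pass through $q$. I would then parametrize $S$ by the position of $q$ on $L_3$, a pencil parameter for each line forced through $q$ (which rationally determines that line's multiple points on $L_1$ and $L_2$), and free parameters for the remaining multiple points on $L_1,L_2$ and for the lines meeting at most two fixed lines. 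Every line of $\mathcal{A}$ is then a rational function of these parameters, so $S$ is dominated by an open subset of an affine space and is irreducible.

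The step I expect to be the main obstacle is the triangle case, and specifically the verification that the incidence data always resolves into this pencil-through-$q$ description without spawning an auxiliary nonlinear constraint: that distinct lines through $q$ never meet $L_1$ or $L_2$ in the same point, that the fixed vertices genuinely absorb the incidences routed through them, and that the constructed points stay distinct so the exact-lattice locus is a nonempty open set. This is precisely the point at which the simple hypothesis is indispensable — dropping it lets the bilinear collinearity relations survive and produce the reducible moduli spaces exhibited later in the paper — so the heart of the argument is confirming that the single special point $q$ linearizes, or removes, every surviving constraint.
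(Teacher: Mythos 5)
The first thing to note is that the paper itself contains no proof of Theorem~\ref{thm:NY-Main}: it is recalled verbatim from \cite{Nazir2010} (their Theorem~3.5), so your attempt can only be measured against the argument in that reference, not against anything internal to this paper. Measured that way, your proposal is correct and is in substance the same proof. The normalization step is legitimate (concurrency of $L_1,L_2,L_3$ is a lattice condition, and $PGL(3,\mathbb{C})$ is transitive on ordered triples of distinct concurrent lines and on ordered triangles, so the slice $S$ surjects onto $\mathcal{M}_{\mathcal{A}}$). Your key computation in the concurrent case checks out: collinearity of $[\alpha:0:1]$, $[0:\beta:1]$, $[\lambda\tau:\tau:1]$ is $\alpha\beta=\tau(\alpha+\lambda\beta)$, i.e.\ $1/\tau=\lambda/\alpha+1/\beta$, and your reciprocal coordinates are precisely the affine coordinates obtained by sending $O$ to the line at infinity, which is how the constraints are linearized in \cite{Nazir2010}; the slice is then a nonempty (witnessed by $\mathcal{A}$ itself) open subset of an affine-linear space times affine factors for the unconstrained lines, hence irreducible. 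In the triangle case you also read the definition correctly (the paper's clause~2 is garbled as printed; ``apart from $L_1\cup L_2\cup L_3$'' means apart from the two vertices lying on that line, as in \cite{Nazir2010}), and the obstacle you flag at the end is in fact automatic: $q$ is not a vertex, so $q\notin L_1\cup L_2$, and two distinct lines through $q$ meet only at $q$, so no multiple point of $L_1$ or $L_2$ lies on two lines of the pencil through $q$. Hence each such point is either a rational function of the single pencil parameter of the unique three-incidence line through it or a free parameter, every remaining line passes through at most two already-constructed points and is therefore determined or free, and no bilinear condition survives.

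One simplification worth recording: the triangle case does not need your parametrization at all, because it follows by iterating Theorem~\ref{lemma:extend-irreduciblity}. Indeed, if every line of $\mathcal{A}\setminus\{L_1,L_2,L_3\}$ passed through three multiple points, all such lines would pass through $q$; but then the multiple point $\ell\cap L_1$ of such a line $\ell$ would require a third line of $\mathcal{A}$ through it, which could only be another line of the pencil through $q$ — impossible, since two lines of that pencil meet only at $q$. So a simple $C_{\leq 3}$ triangle arrangement with more than three lines always contains a line carrying at most two multiple points; removing it preserves the simple $C_{\leq 3}$ triangle property, and one descends to the bare triangle $\{L_1,L_2,L_3\}$, whose moduli space is a point. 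This inductive route is shorter, and it also explains structurally why clause~2 of the definition excludes the reducible examples produced later in the paper, where the bilinear collinearity constraints on a triangle do survive.
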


\begin{thm}[Lemma 3.2, \cite{Nazir2010}]\label{lemma:extend-irreduciblity}
Let $\mathcal{A}=\{L_1, L_2, \cdots, L_n\}$ be a line arrangement.  Assume that $L_n$ passes through at most $2$
multiple points.   Set  $\mathcal{A}'=\{L_1, L_2, \cdots, L_{n-1}\}$, then $\mathcal{M}_\mathcal{A}$ is
irreducible if  $\mathcal{M}_{\mathcal{A}'}$ is irreducible.
\end{thm}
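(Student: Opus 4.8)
The plan is to realize $\mathcal{M}_{\mathcal{A}}$ as fibered over $\mathcal{M}_{\mathcal{A}'}$ by the map that forgets the last line, and to deduce irreducibility of the total space from that of the base together with irreducibility of the fibers. To sidestep the stabilizer subtleties of the $PGL(3,\mathbb{C})$-quotient, I would argue first on the ordered configuration spaces and only descend at the end. Write $V=\{(L_1,\dots,L_n)\in((\mathbb{CP}^2)^*)^n : L(\{L_i\})=L(\mathcal{A})\}$ and $V'=\{(L_1,\dots,L_{n-1}) : L=L(\mathcal{A}')\}$; these are locally closed subvarieties, since fixing a lattice amounts to imposing the closed conditions ``these prescribed lines are concurrent'' together with the open conditions ``no other concurrences occur and the lines are distinct''. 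Then $\mathcal{M}_{\mathcal{A}}=V/PGL(3,\mathbb{C})$ and $\mathcal{M}_{\mathcal{A}'}=V'/PGL(3,\mathbb{C})$. As a first step I would record that $V'$ is irreducible: because $PGL(3,\mathbb{C})$ is connected it fixes each irreducible component of $V'$, so every component is $PGL$-invariant and descends to a closed subset of $\mathcal{M}_{\mathcal{A}'}$; a nontrivial decomposition of $V'$ would then produce one of $\mathcal{M}_{\mathcal{A}'}$, contradicting the hypothesis.

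Next I would study the projection $p:V\to V'$ forgetting $L_n$, whose fiber over $(L_1,\dots,L_{n-1})$ is exactly the set of admissible positions of the last line, i.e.\ the lines $L_n$ for which $\{L_1,\dots,L_{n-1},L_n\}$ realizes $L(\mathcal{A})$. Here the hypothesis that $L_n$ meets at most two multiple points is decisive, and I would split into three cases by the number $k\in\{0,1,2\}$ of multiple points of $\mathcal{A}$ on $L_n$ (each such point, having multiplicity $\ge 3$ in $\mathcal{A}$, is already a vertex of $\mathcal{A}'$, hence a determined point once $(L_1,\dots,L_{n-1})$ is fixed). For $k=0$, $L_n$ ranges over all lines avoiding the finitely many vertices of $\mathcal{A}'$ and distinct from the $L_i$, a dense open subset of $(\mathbb{CP}^2)^*\cong\mathbb{CP}^2$, hence irreducible of dimension $2$. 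For $k=1$ with multiple point $p_1$, $L_n$ ranges over the pencil through $p_1$ with the finitely many lines hitting another vertex or coinciding with some $L_i$ removed, i.e.\ a $\mathbb{CP}^1$ minus finitely many points, irreducible of dimension $1$. For $k=2$ with multiple points $p_1,p_2$, the line $L_n=\overline{p_1p_2}$ is uniquely determined, so the fiber is a single point when this line has the correct incidences and is empty otherwise.

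In the cases $k=0,1$ the fiber is nonempty, irreducible, and of constant dimension over all of $V'$, so $p$ is surjective and I would invoke the standard fact that a surjective morphism onto an irreducible base all of whose fibers are irreducible of the same dimension has irreducible total space; this yields $V$ irreducible. In the case $k=2$ I would instead note that $p$ admits the regular section $(L_1,\dots,L_{n-1})\mapsto(L_1,\dots,L_{n-1},\overline{p_1p_2})$ over the locus $Z'\subseteq V'$ where $\overline{p_1p_2}$ realizes the correct incidences; since the conditions cutting out $Z'$ (avoiding the remaining vertices and the lines $L_i$) are open, $Z'$ is open in the irreducible $V'$, and $p$ restricts to an isomorphism $V\cong Z'$, so again $V$ is irreducible. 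Finally $\mathcal{M}_{\mathcal{A}}=V/PGL(3,\mathbb{C})$ is the image of the irreducible $V$ under the quotient map, hence irreducible, which completes the argument.

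I expect the main obstacle to be the fiber analysis: verifying that the fiber is irreducible of locally constant dimension over the whole base, and, in the case $k=2$, that the image of $p$ is open rather than merely constructible. This is precisely the point where the hypothesis $k\le 2$ enters, since three or more forced multiple points on $L_n$ would impose a genuine collinearity, a proper closed condition on $\mathcal{B}'$, potentially destroying both surjectivity and irreducibility. A secondary technical point is the passage between $V$ and the quotient $\mathcal{M}_{\mathcal{A}}$, which I handle by establishing irreducibility upstairs on $V$ and pushing it forward, rather than tracking the possibly jumping dimensions of the $PGL(3,\mathbb{C})$-stabilizers fiber by fiber.
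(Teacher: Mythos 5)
A preliminary remark: the paper does not prove this statement at all --- it is recalled verbatim from Nazir--Yoshinaga (Lemma 3.2 of \cite{Nazir2010}), so there is no internal proof to compare against. Your strategy is exactly the mechanism behind that cited lemma: fiber the realization space over that of $\mathcal{A}'$ by forgetting $L_n$, with fiber an open subset of the dual plane ($k=0$), of a pencil ($k=1$), or a single point ($k=2$). Your additional care with the group quotient is sound: since $PGL(3,\mathbb{C})$ is connected, each irreducible component of $V'$ is invariant and has closed image in the quotient, so irreducibility of $\mathcal{M}_{\mathcal{A}'}$ does pull back to $V'$, and irreducibility of $V$ pushes forward to $\mathcal{M}_{\mathcal{A}}$ as a continuous image. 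The case analysis of the fibers is also correct, including the observation that the $k\le 2$ hypothesis is what makes the forced incidences on $L_n$ either vacuous, a pencil condition, or a complete determination.

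The one step that does not survive scrutiny as written is the ``standard fact'' you invoke for $k=0,1$: a surjective morphism onto an irreducible base, all of whose fibers are irreducible of the same dimension, need \emph{not} have irreducible total space in the quasi-projective category. A counterexample is $X=(\mathbb{A}^1\setminus\{0\})\sqcup\{\ast\}\to\mathbb{A}^1$, the identity on the first piece and $\ast\mapsto 0$: all fibers are points and the base is irreducible, yet $X$ is even disconnected. The correct versions of this fiber criterion require the morphism to be closed (e.g.\ projective, as in Shafarevich's theorem) or open, and your $p$ is neither hypothesized nor shown to be either. Fortunately your own setup supplies the repair with no extra work. For $k=0$, the variety $V$ is by construction a nonempty Zariski-open subset of $V'\times(\mathbb{CP}^2)^*$, which is irreducible, and a nonempty open subset of an irreducible variety is irreducible --- no fiber theorem is needed. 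For $k=1$, replace the product by the incidence variety $I=\{(v',L): p_1(v')\in L\}$, the pullback along $v'\mapsto p_1(v')$ of the universal point-line incidence; this is a $\mathbb{P}^1$-bundle over the irreducible $V'$, hence irreducible, and $V$ is a nonempty open subset of $I$. (Equivalently: in both cases $p$ is an open map, being the restriction of a projection, respectively of a $\mathbb{P}^1$-bundle, to an open subset, and the fiber criterion is valid for open surjections, where the constant-dimension hypothesis is not even needed.) With this substitution the argument is complete; your treatment of $k=2$ via the section onto the open locus $Z'$ is already correct as it stands.
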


We say that a line arrangement is {\em non-reductive} if each line of the arrangement passes through at least 3 multiple points.

To classify geometric objects, the more invariants we know, the more sharp classification we can expect.
 For arrangements of lines,  numerical invariants which we will use include the highest multiplicity of multiple points, the number of multiple points of certain multiplicity, and special lines passing through given number and type of multiple points.
 The classification of moduli spaces consists of two steps. Firstly, we will roughly classify intersection lattices according to various numerical invariants. Secondly, we will write down defining equations involving parameters for a given intersection lattice. The space of parameters is the moduli space of the arrangement.

Denote by $n_r$ the number of intersection points of multiplicity $r$. We recall the following useful results.

\begin{lemma}[see for instance \cite{Hirzebruch1986}]\label{lemma:intersection-formula}
Let $\mathcal{A}$ be an arrangement of $k$ lines in $\mathbb{CP}^2$. Then
\[\frac{k(k-1)}{2}=\sum_{r\geq 2}\frac{r(r-1)n_r}{2}.\]
\end{lemma}

\begin{thm}[\cite{Hirzebruch1986}]\label{thm:Hirzebruch}Let $\mathcal{A}$ be an arrangement of $k$
lines in $\mathbb{CP}^2$. Assume that $n_{k}=n_{k-1}=n_{k-2}=0$. Then
\[n_2+\frac{3}{4}n_3\geq k +\sum\limits_{r\geq 5}(2r-9)n_r.\]
\end{thm}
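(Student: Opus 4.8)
This is Hirzebruch's inequality, and the natural route is through the Bogomolov--Miyaoka--Yau inequality $c_1^2 \leq 3c_2$ applied to a branched cover of the plane determined by $\mathcal{A}$. The plan is to manufacture, from the combinatorial data of $\mathcal{A}$, a smooth projective surface of general type whose Chern numbers are explicit polynomials in $k$ and the $n_r$, and then to read off the asserted inequality from $3c_2 - c_1^2 \geq 0$.

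First I would resolve the arrangement: blow up $\mathbb{CP}^2$ at every multiple point (multiplicity $\geq 3$) to obtain $\pi\colon \hat{Y} \to \mathbb{CP}^2$, so that the total transform of $L_1 \cup \cdots \cup L_k$ becomes a simple normal crossing divisor $D$, consisting of the proper transforms of the lines together with the exceptional curves over each $r$-fold point. On $\hat{Y}$ I would record the bookkeeping: the self-intersection of each proper transform drops by the number of multiple points lying on that line, and each point of multiplicity $r$ produces an exceptional $(-1)$-curve meeting exactly $r$ of the proper transforms. Next I would form the Kummer (abelian) cover $\rho\colon X_n \to \hat{Y}$ of degree $n^{k-1}$ branched along $D$ with uniform ramification index $n$, for a suitably chosen integer $n$.

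The standard formulas for Chern numbers of abelian covers express $c_1^2(X_n)$ and $c_2(X_n)$ as $n^{k-2}$ times polynomials in $n$ whose coefficients are linear combinations of $k$, $\sum_r n_r$, and $\sum_r r\, n_r$; Lemma~\ref{lemma:intersection-formula} lets me eliminate the quadratic moment via $\sum_r \binom{r}{2} n_r = \binom{k}{2}$. Substituting into $c_1^2(X_n) \leq 3 c_2(X_n)$ and dividing by $n^{k-2}$ yields a single polynomial inequality in $n$; specializing to the value of $n$ that optimizes the coefficients (Hirzebruch's choice $n = 5$) collapses it to $n_2 + \tfrac{3}{4} n_3 \geq k + \sum_{r \geq 5}(2r-9) n_r$. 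The remaining Chern-class computation is mechanical once the covering formulas are in place.

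The main obstacle, and the reason for the hypothesis $n_k = n_{k-1} = n_{k-2} = 0$, is verifying that $X_n$ (or its minimal model) is genuinely of general type, so that the BMY inequality applies. A point of multiplicity $k$, $k-1$, or $k-2$ would force $\mathcal{A}$ to be a pencil or near-pencil, and the associated cover would then be rational or otherwise fail to be of general type, where $c_1^2 \leq 3c_2$ either fails or yields nothing useful. Excluding these configurations guarantees the positivity — concretely, that the log-canonical divisor $K_{\hat{Y}} + D$ is big and nef — needed for the cover to be of general type and for the estimate to hold. Establishing this positivity and controlling the singularities of $X_n$ arising from the normal crossings of $D$ is the delicate part of the argument; I would alternatively note that the same conclusion can be reached by applying the logarithmic (orbifold) Miyaoka--Yau inequality directly to the pair $(\hat{Y}, D)$, which repackages these positivity hypotheses into the same combinatorial inequality.
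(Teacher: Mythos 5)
The paper itself gives no proof of this theorem—it is quoted directly from the cited 1986 Hirzebruch paper—so your proposal has to be judged against Hirzebruch's actual argument, whose general framework (Kummer covers plus Bogomolov--Miyaoka--Yau) you have correctly identified. The gap is in the step where you claim the result ``collapses'' out of plain $c_1^2\le 3c_2$ for one optimized exponent. Write $f_0=\sum_r n_r$, $f_1=\sum_r rn_r$, and let $Y_m$ be the resolved Kummer cover of exponent $m$ and degree $m^{k-1}$. Computing its Chern numbers and simplifying with Lemma \ref{lemma:intersection-formula}, the inequality $3c_2(Y_m)-c_1^2(Y_m)\ge 0$ becomes the family
\[
(m-1)^2\,n_2+\sum_{r\ge 3}\Bigl[(m+1)^2-2r(m-1)\Bigr]n_r\;\ge\;(m-1)^2\,k,\qquad m\ge 2,
\]
and no member of this family is the stated inequality. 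The coefficient of $n_4$ is $(m-3)^2\ge 0$, so the only exponent removing $n_4$ from the left side is $m=3$, which yields $n_2+n_3\ge k+\sum_{r\ge5}(r-4)n_r$; forcing the coefficient of $n_r$ ($r\ge5$) down to $-(2r-9)$ forces $m=2$, which yields $n_2+3n_3+n_4\ge k+\sum_{r\ge5}(2r-9)n_r$; and your choice $m=5$ yields only $n_2+\tfrac34 n_3+\tfrac14 n_4\ge k+\sum_{r\ge5}\tfrac{2r-9}{4}n_r$. Plain BMY on the cover can never produce the coefficients $\tfrac34 n_3$ and $(2r-9)n_r$ simultaneously, for any $m$.

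The missing idea---which is the actual content of the 1986 reference, ``characteristic numbers of surfaces with quotient singularities''---is a refined Miyaoka--Yau inequality crediting curves that sit on the cover. Take $m=2$: over each triple point $Y_2$ contains $2^{k-4}$ disjoint smooth rational curves of self-intersection $-2$, and over each quadruple point it contains $2^{k-5}$ disjoint elliptic curves of self-intersection $-4$. The Miyaoka--Sakai refinement for surfaces with quotient singularities and disjoint elliptic curves,
\[
3c_2-c_1^2\;\ge\;\tfrac92\,\#\{\text{$(-2)$-curves}\}+\sum_{C\ \text{elliptic}}\lvert C^2\rvert ,
\]
upgrades the $m=2$ inequality above exactly to $n_2+\tfrac34 n_3\ge k+\sum_{r\ge5}(2r-9)n_r$. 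This also corrects your account of the hypotheses: a point of multiplicity $k-2$ does not make $\mathcal{A}$ a near-pencil; rather, $n_{k-2}=0$ is needed because the argument requires $m=2$, and the double Kummer cover can fail to be of general type when $n_{k-2}\neq 0$, whereas for $m\ge 3$ the assumptions $n_k=n_{k-1}=0$ suffice---which is precisely why Hirzebruch's weaker 1983 inequality $n_2+\tfrac34 n_3\ge k+\sum_{r\ge5}(r-4)n_r$ (obtained from $m=3$ together with the elliptic curves over triple points) needs only those. Finally, your proposed fallback of applying log-MY directly to the pair $(\hat Y,D)$ does not rescue the argument: carried out, it gives only $\sum_r n_r\ge k$, i.e.\ the number of singular points is at least $k$, which is far from the theorem.
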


\section{Arrangements of 10 lines with multiple points of multiplicity at least 5}
One can easily discover that arrangements with multiple points of high multiplicity will most likely have irreducible moduli spaces. Results in this section suggest that we should not expect Zariski pairs of arrangements of 10 lines with at least a quintuple point.

\begin{thm}\label{prop:geq-6}
Let $\mathcal{A}$ be an arrangement of $10$ lines. If there is a multiple point of multiplicity $\geq 6$, then
the moduli space $\mathcal{M}_\mathcal{A}$ is irreducible.
\end{thm}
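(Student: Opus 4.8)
The plan is to realize $\mathcal{M}_\mathcal{A}$ as the end of a chain of arrangements obtained by repeatedly deleting a line that passes through at most two multiple points, using Theorem~\ref{lemma:extend-irreduciblity} as the engine and a pencil as the irreducible base of the chain. Let $p$ be the point of multiplicity $m\geq 6$ and set $t=10-m\leq 4$, the number of lines of $\mathcal{A}$ that avoid $p$; call $L_1,\dots,L_m$ the lines through $p$ (the \emph{pencil lines}) and $L_{m+1},\dots,L_{10}$ the rest. The base case is the pencil itself: $m$ concurrent labelled lines correspond to $m$ labelled points on the $\mathbb{P}^1$ of directions through $p$, and since the stabiliser of $p$ in $PGL(3,\mathbb{C})$ acts on this $\mathbb{P}^1$ as the full $PGL(2,\mathbb{C})$, the moduli space of a pencil is the configuration space of $m$ points on $\mathbb{P}^1$ modulo $PGL(2,\mathbb{C})$, which is irreducible (indeed rational). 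So it suffices to peel $\mathcal{A}$ down to a pencil through lines each of which, at the moment it is removed, lies on at most two multiple points.

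The combinatorial heart is a counting bound. Since two pencil lines meet only at $p$, any multiple point other than $p$ contains at most one pencil line; consequently every such point contains at least two of the $t$ lines off $p$. This forces an off-pencil line to lie on at most $t-1$ multiple points (it meets only $t-1$ other off-pencil lines, and $p\notin L_i$), while a pencil line lies on $p$ together with at most $\lfloor t/2\rfloor$ further multiple points (the $t$ off-pencil lines can cluster on it in at most $\lfloor t/2\rfloor$ groups of size $\geq 2$). When $t\leq 3$, that is $m\geq 7$, every line therefore lies on at most two multiple points; since the multiple points of a subarrangement are among those of $\mathcal{A}$, this bound persists as lines are removed, so we may delete the off-pencil lines one at a time by Theorem~\ref{lemma:extend-irreduciblity} and arrive at the irreducible pencil.

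The only delicate case is $m=6$, $t=4$, where the bounds degrade to $3$ and a line need not be peelable. Here I would run a double count of the incidences between the lines and the \emph{extra} multiple points (those $\neq p$), each of which carries at least two of the four off-pencil lines and at most one pencil line. If every off-pencil line met the maximum of three extra multiple points, the pair count $\sum\binom{j_q}{2}\leq\binom{4}{2}=6$ against the incidence count $\sum j_q=12$ forces exactly six triple points, namely the six vertices of the complete quadrilateral formed by the four off-pencil lines, each vertex carrying exactly one pencil line; that is six pencil-line/vertex incidences. If in addition every pencil line were unpeelable, each would meet $p$ and two further triple points, and summing over the six pencil lines would give $6\cdot 2=12$ incidences, a contradiction. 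Hence when $t=4$ some line always passes through at most two multiple points. If that line is off the pencil we delete it, drop to $t=3$, and finish as above; if no off-pencil line is peelable we are in the rigid complete-quadrilateral configuration, which is irreducible on its own terms: fix the four general off-pencil lines (a single $PGL(3,\mathbb{C})$-orbit, since four ordered lines in general position form a projective frame) and let $p$ vary over the open subset of $\mathbb{P}^2$ on which the six joins $pv_i$ are distinct, so that $\mathcal{M}_\mathcal{A}$ is an open subset of $\mathbb{P}^2$.

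I expect the $m=6$ bookkeeping to be the main obstacle: one must verify that the counting genuinely excludes an everywhere-unpeelable arrangement and that, in the borderline rigid configuration, deleting a pencil line immediately frees an off-pencil line (the two off-pencil lines through the demoted vertex then meet only two triple points), so that the peeling terminates at a pencil rather than stalling at a low-multiplicity configuration. Packaging the three regimes ($m\geq 7$, generic $m=6$, rigid $m=6$) into a single clean induction on the number of lines, with the pencil and the complete quadrilateral as the two irreducible anchors, is the step that needs care; the remaining cases follow essentially at once from Theorem~\ref{lemma:extend-irreduciblity} and the pencil base case.
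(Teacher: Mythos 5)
Your strategy --- iterated deletion via Theorem~\ref{lemma:extend-irreduciblity}, with the pencil as the irreducible anchor --- is sound in outline and genuinely different from the paper's proof. The paper makes the same key counting observation (at least one of the lines through the high-multiplicity point contains at most two multiple points), deletes that \emph{single} line, and then quotes the classification of moduli spaces of arrangements of $9$ lines (\cite{Ye2011}, Proposition 3.3) to conclude; your proposal replaces that external citation by a self-contained peeling argument. Your $m\geq 7$ case is correct, and so is the double count for $m=6$: if all four off-pencil lines carry three multiple points, the extra multiple points are forced to be the six vertices of the complete quadrilateral formed by the off-pencil lines, each vertex carrying exactly one pencil line.

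The gap is in your last step, the rigid case. From ``each vertex carries exactly one pencil line'' you cannot conclude that each pencil line carries exactly one vertex: the six incidences may be distributed so that one pencil line passes through \emph{two} vertices and another through \emph{none}. This happens exactly when $p$ lies on a diagonal of the complete quadrilateral, for then the joins $pv_i$ and $pv_j$ of a diagonal pair coincide, the six pencil lines are not the six joins, and there is a pencil line through no vertex. Such arrangements are realizable (take four general lines, put $p$ generically on one diagonal, take the five distinct joins plus one extra generic line through $p$), all four off-pencil lines are still unpeelable since all six vertices remain triple points, so they land squarely in your rigid case --- but their intersection lattices differ from the one whose moduli space you compute. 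Your open subset of $\mathbb{P}^2$ parametrizes only the lattice in which the six joins are distinct; the lattices with $p$ on one diagonal, or with $p$ a vertex of the diagonal triangle, are separate combinatorial types within the scope of the theorem and are left unhandled. The repair stays inside your own framework: in every rigid lattice the extra multiple points are exactly the six vertices, so any pencil line through at most one vertex has at most two multiple points and can be peeled; once it is gone, its vertex is demoted, the two off-pencil lines through that vertex drop to two multiple points, and the configuration unravels down to a pencil (this works in your bijective case as well, making the $\mathbb{P}^2$ computation dispensable). As written, though, the case analysis is incomplete --- which is precisely the $m=6$ bookkeeping your final paragraph flags but does not carry out.
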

\begin{proof}
Assume that $L_1\cap L_2\cap\cdots\cap L_6\neq \emptyset$. It is easy to check that at least one of the six
lines contains at most two multiple points. By Lemma \ref{lemma:extend-irreduciblity} and classification of arrangements of 9 lines (see \cite{Ye2011} Proposition 3.3), we see that $\mathcal{M}_\mathcal{A}$ is irreducible.
\end{proof}

When the highest multiplicity is 5, we notice that there is a case that the moduli space is reducible. However, a close look (see Remark \ref{rmk:extended-FS}) shows that the fundamental groups are still isomorphic.

\begin{thm}\label{prop:r=5}
Let  $\mathcal{A}$ be a non-reductive arrangement of $10$ lines with a quintuple point and no multiple points of higher multiplicities. Then $\mathcal{A}$ contains a Falk-Sturmfels arrangement as a sub-arrangement.
\end{thm}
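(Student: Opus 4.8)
The plan is to normalize the quintuple point, first pin down the global numerics forced by non-reductivity, and then read off the Falk--Sturmfels pattern from a charging argument. First I would set the quintuple point at $p=L_1\cap\cdots\cap L_5$ and write $L_6,\dots,L_{10}$ for the remaining five lines. The one observation everything rests on is that any multiple point $q\neq p$ contains at most one of the pencil lines $L_1,\dots,L_5$, since two distinct lines of a pencil meet only at $p$. Thus every multiple point other than $p$ is supported on $L_6,\dots,L_{10}$ together with at most one pencil line; call it of \emph{type $b$} if it contains exactly one pencil line (hence at least two of $L_6,\dots,L_{10}$) and of \emph{type $a$} if it contains none.

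The next step is to show that the hypotheses force a single quintuple point and a rigid incidence scheme. Two quintuple points can share at most one line, and in either the disjoint or the one-shared-line case the meet-at-a-point observation shows that some line of the configuration would pass through fewer than three multiple points, contradicting non-reductivity; hence $n_5=1$. Now each pencil line $L_j$ ($j\le 5$) carries $p$ together with at least two further multiple points, all of type $b$, and a type-$b$ point determines its unique pencil line, so the type-$b$ points on distinct pencil lines are distinct and there are at least $2\cdot 5=10$ of them. On the other hand a type-$b$ point of multiplicity $r$ absorbs $\binom{r-1}{2}$ of the $\binom{5}{2}=10$ pairwise intersections of $L_6,\dots,L_{10}$, with disjoint contributions; comparing $\sum_{\text{type }b}\binom{r-1}{2}\le 10$ with the bound of at least $10$ type-$b$ points forces exactly ten type-$b$ points, all triple, exhausting every intersection $L_i\cap L_j$ with $6\le i<j\le 10$. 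In particular there are no type-$a$ points and no quadruple points, in agreement with Lemma~\ref{lemma:intersection-formula} ($n_5=1,\ n_3=10,\ n_4=0,\ n_2=5$) and Theorem~\ref{thm:Hirzebruch}.

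These incidences say that each of the ten intersections of two lines among $L_6,\dots,L_{10}$ lies on exactly one pencil line, two on each pencil line, and that the two pairs assigned to a given pencil line are disjoint (three concurrent extra lines on a pencil line would meet it twice); equivalently, the assignment of extra-line pairs to pencil lines is a near-$1$-factorization of $K_5$ on $\{L_6,\dots,L_{10}\}$, which is essentially unique up to isomorphism. This is exactly the combinatorial type of the Falk--Sturmfels arrangement, so selecting the lines realizing this incidence structure exhibits a Falk--Sturmfels sub-arrangement of $\mathcal{A}$. The main obstacle I anticipate is the bookkeeping that makes the charging argument tight in every sub-case --- ruling out type-$a$ points and type-$b$ points of multiplicity four, and dispatching the putative second quintuple point --- together with the final identification step: checking that the rigid near-$1$-factorization skeleton is actually realized by the Falk--Sturmfels lattice and not by some combinatorially equivalent but geometrically distinct configuration.
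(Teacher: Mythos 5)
Your proposal is correct in substance and takes a genuinely different route from the paper. The paper derives $n_5=1$, $n_3+n_4=10$ and $n_4\le 1$ from Lemma~\ref{lemma:intersection-formula} and Hirzebruch's inequality (Theorem~\ref{thm:Hirzebruch}) combined with non-reductivity, rules out $n_4=1$ by an ad hoc incidence count, and then deletes one pencil line and cites \cite{Ye2011}, Proposition 3.4, to recognize the remaining nine lines as a Falk--Sturmfels arrangement. You replace all of the numerics by pencil combinatorics: two quintuple points would leave some line with fewer than three multiple points (so $n_5=1$ with no Hirzebruch input --- this checks out, since in the shared-line case the common line carries only the two quintuple points, and in the disjoint case the five lines through one quintuple point cut any line of the other pencil in five distinct points), and your charging argument --- at least two type-$b$ points on each of the five pencil lines, all distinct, against a budget of $\binom{5}{2}=10$ pairwise intersections of $L_6,\dots,L_{10}$ absorbed disjointly --- forces in one stroke exactly ten triple points, no type-$a$ points, no quadruple points, and the full incidence structure: every $L_i\cap L_j$ with $6\le i<j\le 10$ is a triple point on exactly one pencil line, two disjoint pairs per pencil line. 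That is correct, and the near-$1$-factorization of $K_5$ is indeed unique up to isomorphism, so you determine the entire intersection lattice, which is more than the paper ever needs to establish. The one soft spot is precisely the step you flag at the end: asserting that this skeleton \emph{is} the Falk--Sturmfels combinatorial type does not by itself produce a Falk--Sturmfels \emph{sub-arrangement}, because the Falk--Sturmfels arrangements are specific geometric objects (two real arrangements, conjugate over $\mathbb{Q}(\sqrt{5})$), and one must still know that every realization of that lattice is one of them. The repair is routine given what you proved: delete any pencil line; the remaining nine lines have one quadruple point and eight triple points, with four of the non-pencil lines through three triple points each and the fifth through four --- exactly the configuration the paper feeds into \cite{Ye2011}, Proposition 3.4. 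So your argument still needs that citation (or an explicit computation of the realization space of your lattice) to close; with it, your proof is complete, self-contained on the combinatorial side, and arguably cleaner than the paper's.
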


\begin{proof}
By Lemma \ref{lemma:intersection-formula} and Theorem \ref{thm:Hirzebruch}, we have the following inequality $n_3+n_4\leq  \frac{140-44n_5}{9}$. Since $n_5\geq 1$, thus $n_3+n_4\leq 10$. On the other hand,  there must be at least $11-n_5$
multiple points so that each line will pass through at least 3 multiple points. Therefore,
$11-n_5\leq n_3+n_4$.
The two inequalities together tell us that  $n_5=1$ and $n_3+n_4=10$.  Apply Lemma \ref{lemma:intersection-formula} again, and we see that $n_4\leq 1$.

Assume that $L_1\cap L_2\cap L_3\cap L_4\cap L_5$ is the only quintuple point. There should be no multiple point apart from $L_1\cup L_2\cup \cdots \cup L_5$.  Otherwise,  $n_3+n_4\geq 11$, since each line passes through at least 3 multiple points. Moreover, each of the lines $L_1$, $L_2$, $\dots$, $L_5$ should pass through exactly 2 more multiple points.

Let $n_4= 1$. Then there are 9 triple points. We may assume that $L_1\cap L_6\cap L_7\cap L_8$ is the quadruple point. Since each line passes through 3 multiple points,  then $L_9\cap L_{10}$ must be a triple point on $L_1$. Now there are 3 multiple points on $L_1$. The rest of the multiple points are triple points which should be in $(L_9\cup L_{10})\cap (L_6\cup L_7\cup L_8)\cap (L_2\cup L_3\cup L_4\cup L_5)$. However, there are at most 6 triple points in the intersection.

Let $n_4=0$.  We may assume that  $L_1\cap L_6\cap L_7$ and $L_1\cap L_8\cap L_9$ are the 2 triple points on $L_1$. Then the sub-arrangement $\mathcal{A}':=\mathcal{A}\setminus\{L_1\}$ has 1 quadruple point, $L_2\cap L_3\cap L_4\cap L_5$, and 8 triple points. Note that the 8 triple points should be in $(L_6\cup L_7)\cap (L_8\cup L_9)$ and $L_{10}\cap (L_6\cup L_7\cup L_8\cup L_9)$. Therefore, each of $L_6$, $L_7$, $L_8$ and $L_9$ should pass through 3 triple points and $L_{10}$ should pass through 4 triple points.  By the classification of arrangements of 9 lines (see \cite{Ye2011} Proposition 3.4), $\mathcal{A}'$ is isomorphic to a Falk-Sturmfels arrangement.
\end{proof}

\begin{rmk}\label{rmk:extended-FS}
Note that the line arrangements  in Proposition \ref{prop:r=5} are lattice isomorphic to those extended Falk-Sturmfels arrangements defined in \cite{Ye2011}, Example 4.1. Hence the fundamental groups are isomorphic.
\end{rmk}

A surprising corollary of Theorem \ref{prop:geq-6} and \ref{prop:r=5} is that there is no Zariski pair of arrangements of 10 lines with a multiple point whose multiplicity is at least 5.

\begin{thm}
Let $\mathcal{A}$ be an arrangements of $10$ lines such that $n_{r}\geq 1$ for some $r\geq 5$. Then the fundamental groups of the complement $\pi_{1}(M(\mathcal{A}))$ is determined by the intersection lattice $L(\mathcal{A})$.
\end{thm}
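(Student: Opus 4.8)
The plan is to reduce the assertion to a statement about the geometry of moduli spaces, using the fact that irreducibility (or a complex-conjugate decomposition into two pieces) is the only obstruction to a Zariski pair. Recall from the introduction that, by Randell's Lattice-Isotopy Theorem \cite{Randell1989} together with Cohen--Suciu's Theorem 3.9 \cite{Cohen1997}, any two arrangements lying in the same connected component of a moduli space, or in two complex-conjugate components, have diffeomorphic complements and hence isomorphic fundamental groups. Since relabeling the lines of an arrangement does not alter its complement, to prove that $\pi_1(M(\mathcal{A}))$ depends only on $L(\mathcal{A})$ it suffices to show, for the lattice $L(\mathcal{A})$, that $\mathcal{M}_\mathcal{A}$ is either irreducible or consists of exactly two components interchanged by complex conjugation. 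I would organize the argument according to the maximal multiplicity $r$ of a point of $\mathcal{A}$, which by hypothesis satisfies $r\geq 5$.

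First I would dispatch the two immediate cases. If $r\geq 6$, then Theorem \ref{prop:geq-6} gives that $\mathcal{M}_\mathcal{A}$ is irreducible, and we are done by the reduction above. If $r=5$ and $\mathcal{A}$ is non-reductive, then Theorem \ref{prop:r=5} shows that $\mathcal{A}$ contains a Falk--Sturmfels arrangement as a sub-arrangement, and Remark \ref{rmk:extended-FS} identifies $\mathcal{A}$, up to lattice isomorphism, with an extended Falk--Sturmfels arrangement; the two components of its moduli space are complex conjugates, so again $\pi_1(M(\mathcal{A}))$ is determined by $L(\mathcal{A})$. This leaves only the case $r=5$ with $\mathcal{A}$ reductive, that is, with some line $\ell$ passing through at most two multiple points.

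In this remaining case I would delete such a line $\ell$ and pass to the nine-line arrangement $\mathcal{A}'=\mathcal{A}\setminus\{\ell\}$. By the classification of arrangements of nine lines in \cite{Ye2011}, there are no Zariski pairs of nine lines, so $\pi_1(M(\mathcal{A}'))$ is already a combinatorial invariant, and by Lemma \ref{lemma:extend-irreduciblity} irreducibility of $\mathcal{M}_{\mathcal{A}'}$ propagates to $\mathcal{M}_\mathcal{A}$. The hard part is that Lemma \ref{lemma:extend-irreduciblity} as stated transports only irreducibility, whereas for the exceptional nine-line arrangements (those of Falk--Sturmfels type) $\mathcal{M}_{\mathcal{A}'}$ is reducible with a single pair of complex-conjugate components. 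I would therefore re-examine the fibration underlying Lemma \ref{lemma:extend-irreduciblity}: reinstating $\ell$ amounts to choosing its position in a connected, rational, and real-defined family of admissible lines (a single point when $\ell$ is forced through two prescribed points, an open subset of a pencil when through one, an open subset of $(\mathbb{CP}^2)^*$ when through none). Hence the deletion map $\mathcal{M}_\mathcal{A}\to\mathcal{M}_{\mathcal{A}'}$ has connected fibers and induces a bijection on components commuting with complex conjugation, so $\mathcal{M}_\mathcal{A}$ inherits from $\mathcal{M}_{\mathcal{A}'}$ the dichotomy of being irreducible or a single complex-conjugate pair. Applying the reduction of the first paragraph once more finishes the proof; the real work lies in establishing connectedness and real-definability of these fibers and in confirming that every reductive configuration with $r=5$ genuinely reduces into the range covered by the nine-line classification of \cite{Ye2011}.
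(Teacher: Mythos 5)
Your opening reduction---that irreducibility of $\mathcal{M}_{\mathcal{A}}$, or a decomposition into two components interchanged by complex conjugation, forces $\pi_1(M(\mathcal{A}))$ to be combinatorial---is the right starting point, and your treatment of $r\geq 6$ via Theorem \ref{prop:geq-6} is fine. The genuine gap is your repeated assertion that the reducible moduli spaces arising when $r=5$ split into \emph{complex-conjugate} pairs. This is false in exactly the case that matters: the Falk--Sturmfels arrangement and its ten-line extensions are \emph{real} arrangements, defined over $\mathbb{Q}(\sqrt{5})$ (compare the parameters $t=\frac{1\pm\sqrt{5}}{2}$ appearing elsewhere in the paper); their two moduli components are Galois conjugate, but each component is fixed pointwise-up-to-isotopy by complex conjugation rather than being swapped with the other. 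Consequently neither your first-paragraph reduction nor any conjugation-equivariant ``connected fibers'' refinement of Theorem \ref{lemma:extend-irreduciblity} can reach these arrangements: a bijection on components commuting with conjugation transports a property that these components simply do not possess. This is precisely why the Falk--Sturmfels pair is a ``potential Zariski pair'' and why the theorem needs a non-formal input at this point; your proposal never supplies one. (Your citation of Remark \ref{rmk:extended-FS} does rescue the non-reductive case, but through Ye's computation for extended Falk--Sturmfels arrangements, not through the conjugation argument you attach to it.)

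The paper closes this gap with an idea absent from your proposal: there is a projective automorphism $\varphi$ of $\mathbb{CP}^2$ carrying one Falk--Sturmfels realization to the other and fixing the line $H$ that contains the quadruple point and the two exceptional double points. Any ten-line arrangement in question is a Falk--Sturmfels arrangement plus a line through its quadruple point, so two lattice-isomorphic such arrangements either lie in one irreducible component or have complements diffeomorphic under $\varphi$; either way $\pi_1$ agrees. Note also that ``there is no Zariski pair of nine lines'' does not by itself propagate to ten lines, which is how your reductive case tries to conclude: knowing the two Falk--Sturmfels complements have isomorphic fundamental groups says nothing about the complements after a tenth line is inserted, unless the comparing diffeomorphism can be chosen to respect that added line---and that is exactly what the condition $\varphi(H)=H$ buys. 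By contrast, for arrangements containing a MacLane subarrangement your fiber-connectedness argument is sound in spirit and matches the paper's ``adding two lines to a MacLane arrangement'' observation, since there the two components genuinely are complex conjugate.
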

\begin{proof}
It suffices to consider cases that $\mathcal{A}$ has a quintuple point and contains a MacLane arrangement or a Falk Sturmfels arrangement. However, it is not hard to check that  adding two lines to a MacLane arrangement does not change the irreducibility of $\mathcal{M}^{c}_{\mathcal{A}}$. In fact, the two lines added to the MacLane arrangements must pass through a triple point and at most one triple point. Now, assume that $\mathcal{A}$ contains a Falk-Sturmfels arrangement. In other words,  $\mathcal{A}$ can be obtained from a Falk-Sturmfels arrangement by adding a line passing through the unique quadruple point of the Falk-Sturmfels arrangement. Note that a Falk-Sturmfels arrangement has the following feature: all but 2 of the intersection points in the intersection lattice are on the union of the lines passing through the quadruple point.  Moreover, the 2 double points and the quadruple point are collinear, say on the line $H$.  Recall that there is an automorphism $\varphi$ of $\mathbb{CP}^{2}$ which sends one of the Falk-Sturmfels arrangement to the other one and fixes the line $H$. Let $\mathcal{A}_{1}$ and $\mathcal{A}_{2}$ be two lattice isomorphism arrangements of 10 lines which contain Falk-Sturmfels arrangements. Then it is easy to see that either they are in the same irreducible component, or the complements $M(\mathcal{A}_{1})$ and $M(\mathcal{A}_{2})$ are diffeomorphic under the automorphism $\varphi$.
 \end{proof}

\section{Arrangements of 10 lines with 2 or more quadruple points}
In this section, we investigate an arrangement of 10 lines with at least 2 quadruple points and no multiple points of higher multiplicities.  First, let us consider possible values of the numerical invariant $n_4$ such that the arrangement is non-reductive.

\begin{lemma}\label{lemma:atmost3quadruple} Let $\mathcal{A}$ be a non-reductive arrangement of 10 lines in $\mathbb{CP}^2$ with $n_r=0$ for $r\geq 5$. Then $n_4 \leq 3$.
\end{lemma}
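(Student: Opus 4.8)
The plan is to pin down $n_4$ using three linear relations among the face numbers $n_2, n_3, n_4$ (recall that $n_r = 0$ for $r \geq 5$), and then to exploit integrality to sharpen the bound from the one that the real inequalities alone provide.

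First I would apply Lemma~\ref{lemma:intersection-formula} with $k = 10$, which gives the exact count
\[n_2 + 3n_3 + 6n_4 = 45.\]
Second, since $n_r = 0$ for $r \geq 5$ (in particular $n_{10} = n_9 = n_8 = 0$), Theorem~\ref{thm:Hirzebruch} applies and, with an empty sum on the right, yields
\[n_2 + \tfrac{3}{4}n_3 \geq 10.\]
Third, I would encode non-reductivity as an incidence count: a triple point lies on $3$ lines and a quadruple point on $4$ lines, so the total number of line--point incidences equals $3n_3 + 4n_4$; on the other hand each of the $10$ lines meets at least $3$ multiple points, whence
\[3n_3 + 4n_4 \geq 30.\]

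Next I would eliminate $n_2$. Substituting $n_2 = 45 - 3n_3 - 6n_4$ from the first relation into the Hirzebruch inequality gives $45 - \tfrac{9}{4}n_3 - 6n_4 \geq 10$, that is
\[9n_3 + 24n_4 \leq 140.\]
Combining this with the incidence bound in the form $n_3 \geq 10 - \tfrac{4}{3}n_4$ produces $90 + 12n_4 \leq 140$, so that $n_4 \leq 4$ already at the level of real numbers; this rules out $n_4 \geq 5$ outright.

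The remaining case $n_4 = 4$ is where the real inequalities are not quite decisive, and this is the only genuine obstacle. Here I would lean on integrality. With $n_4 = 4$ the incidence bound reads $3n_3 \geq 14$, forcing $n_3 \geq 5$ because $n_3 \in \mathbb{Z}$, while the bound derived from Hirzebruch's inequality reads $9n_3 \leq 44$, forcing $n_3 \leq 4$. These are incompatible, so $n_4 = 4$ cannot occur and we conclude $n_4 \leq 3$. I expect no difficulty beyond checking two bookkeeping points: that the incidence count is correct (each multiple point counted with its exact number of lines, and each line guaranteed at least three multiple points by non-reductivity), and that Theorem~\ref{thm:Hirzebruch} genuinely applies since all multiplicities are at most $4$, so the hypothesis $n_{10}=n_9=n_8=0$ holds trivially.
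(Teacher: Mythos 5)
Your proof is correct and takes essentially the same approach as the paper: eliminate $n_2$ between Lemma~\ref{lemma:intersection-formula} and Theorem~\ref{thm:Hirzebruch} to obtain $9n_3+24n_4\leq 140$, then play this against a lower bound on $n_3$ forced by non-reductivity. The only difference is how that lower bound is phrased: the paper asserts $n_3\geq 9-n_4$ and concludes in one step ($15n_4\leq 59$), while you use the incidence count $3n_3+4n_4\geq 30$ and then invoke integrality to eliminate the boundary case $n_4=4$ --- your version is, if anything, the more carefully justified of the two.
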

\begin{proof}
By Theorem \ref{thm:Hirzebruch} and Lemma
\ref{lemma:intersection-formula}, we know that $n_3\leq \frac{140-24n_4}{9}$. Since each line passes through at least 3
multiple points, then there must be at least $9-n_4$ triple points. Therefore, we obtain that $n_4\leq
3$ from the inequality $9-n_4\leq n_3\leq  \frac{140-24n_4}{9}$.
\end{proof}

%By this lemma, we shall focus on $n_4=3, 2, 1$.

Notice that the number of quadruple points is still a very rough combinatorial invariant. Fixing this invariant, we want more combinatorial invariants. An obvious one is the collinearity of those quadruple points in the arrangement.  Another invariant that we frequently used is the numbers of multiple points on lines.  By investigating possible multiple points on lines, we are able to narrow down the number of classes of arrangements with desired combinatorial properties and then write down definition equations without difficulties.

\subsection{$n_4=3$}
We first consider that arrangements have 3 quadruple points. By Lemma \ref{lemma:intersection-formula}, there are at most 7 triple points.

\begin{thm}\label{prop:r=4}Let $\mathcal{A}$ be a non-reductive arrangement of $10$ lines in $\mathbb{CP}^2$ such that
 $n_4=3$ and $n_r=0$ for $r\geq 5$. Then the moduli space $\mathcal{M}_{\mathcal{A}}$ or  $\mathcal{M}^c_{\mathcal{A}}$ is irreducible.
 \end{thm}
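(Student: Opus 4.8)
The plan is to combine the numerical constraints with a classification by the mutual position of the three quadruple points, and then resolve each resulting lattice by an explicit parametrization.

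First I would pin down the coarse numerics. By Lemma \ref{lemma:intersection-formula} with $n_4=3$ and $n_r=0$ for $r\ge 5$ we get $n_2+3n_3=27$; substituting $n_2=27-3n_3$ into Hirzebruch's inequality (Theorem \ref{thm:Hirzebruch}) forces $n_3\le 7$. On the other hand, counting incidences between the $10$ lines and the $3+n_3$ multiple points, non-reductivity (each line meets at least $3$ multiple points) gives $12+3n_3\ge 30$, i.e. $n_3\ge 6$. Hence $n_3\in\{6,7\}$: when $n_3=6$ every line carries exactly three multiple points, while $n_3=7$ leaves only three extra incidences to distribute among the lines.

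Next I would introduce the decisive combinatorial invariant: how the three quadruple points sit relative to one another. Writing $m_k$ for the number of lines through exactly $k$ quadruple points, I would use $\sum_k k\,m_k=12$ together with $\sum_k m_k=10$ to show that only three configurations occur: (I) the three quadruple points are collinear, so a single line passes through all three ($m_3=1$, $m_2=0$, $m_1=9$); (II) exactly two of the three connecting lines lie in $\mathcal{A}$ ($m_2=2$, $m_1=8$); or (III) all three connecting lines lie in $\mathcal{A}$, forming a triangle, with one remaining line off the quadruple points ($m_2=3$, $m_1=6$, $m_0=1$). The point is that a collinear triple forces its joining line into $\mathcal{A}$, since otherwise $m_1=12>10$, and that $t\le 1$ present ``sides'' in the non-collinear case would again require more than $10$ lines. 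For each of (I)--(III) I would then use the triple-point count $n_3\in\{6,7\}$ and the non-reductivity bookkeeping (in case (III), the off-diagonal line must carry at least three triple points) to enumerate the finitely many admissible intersection lattices, organizing the enumeration by which lines carry how many triple points.

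For each admissible lattice I would kill the $PGL(3,\mathbb{C})$-action by fixing a projective frame adapted to the quadruple points, and then write the remaining line equations with a small number of parameters; the collinearity conditions imposed by the triple points become explicit polynomial equations whose solution set is $\mathcal{M}_{\mathcal{A}}$. In the rigid or rationally-parametrized cases this set is a point or a rational variety, so $\mathcal{M}_{\mathcal{A}}$ is irreducible; a few lattices will instead visibly be simple $C_{\le 3}$ or reduce to the $9$-line classification, so that Theorem \ref{thm:NY-Main} or Lemma \ref{lemma:extend-irreduciblity} applies after deleting an auxiliary line. I expect the main obstacle to be twofold. The bookkeeping obstacle is \emph{exhaustiveness}: one must check that no admissible distribution of triple points among the lines is overlooked in (I)--(III). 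The substantive obstacle is the handful of lattices whose equations genuinely split and do not reduce to a $C_{\le 3}$ pattern; there, since non-reductivity forbids deleting a line through at most two multiple points, I must solve the polynomial system explicitly, trace any reducibility back to a MacLane-type sub-configuration, and verify that the several solutions are interchanged by complex conjugation, so that passing to the quotient $\mathcal{M}^c_{\mathcal{A}}$ restores irreducibility, which is all the statement demands.
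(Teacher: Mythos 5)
Your plan is correct and is essentially the paper's proof: the paper likewise takes the collinearity pattern of the three quadruple points as the decisive invariant, enumerates the admissible lattices under non-reductivity, and resolves each by explicit parametrization, with the one genuinely reducible lattice (whose dual is the nine-line $\mathcal{A}^{\pm\sqrt{-1}}$ configuration) handled exactly as you propose, by noting that its two points are exchanged by complex conjugation so that $\mathcal{M}^c_{\mathcal{A}}$ is irreducible. The only organizational difference is that the paper disposes of your case (II) at the outset --- if two quadruple points are not joined by a line of $\mathcal{A}$, then each of the two lines joining them to the third quadruple point carries exactly two multiple points, contradicting non-reductivity --- whereas in your plan this same observation would surface during the non-reductivity bookkeeping of the enumeration.
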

\begin{proof}Given 2 non-collinear quadruple points in $\mathcal{A}$, we claim that there will be a line passing through only 2 multiple points. To see that, we let $L_1\cap L_2\cap L_3\cap
L_4$ and $L_5\cap L_6\cap L_7\cap L_8$ be the 2 quadruple points. Then the third quadruple point must be $L_i\cap L_j \cap L_9\cap L_{10}$ for some $i\in\{1, 2, 3 ,4\}$ and  $j\in \{5, 6, 7, 8\}$. Then there are only 2 multiple points (in fact, 2 quadruple points) on $L_i$, as well as on $L_j$.

Consider that the 3 quadruple points are not collinear, but any 2 of them are collinear in $\mathcal{A}$.  Let $L_1\cap L_2\cap L_3\cap L_{10}$, $L_4\cap L_5\cap L_6\cap L_{10}$ and $L_3\cap L_4\cap L_7\cap L_8$ be the 3 quadruple points.  Since each line passes through 3 multiple points, there must be a triple point on $L_{10}$. We may assume that $L_7\cap L_9\cap L_{10}$ is the triple point.

Assume that $L_8\cap L_9$ is not a triple point.  Then $L_8$ should contain either $\{L_1\cap L_5, L_2\cap L_6\}$ (see Figure \ref{n4=3-1}) or $\{L_1\cap L_6, L_2\cap L_5\}$  so that there will be 3 multiple points on $L_8$. Up to a lattice-isomorphism, we may assume that $L_8$ contains $\{L_1\cap L_5, L_2\cap L_6\}$. Similarly,  we assume that $L_9$ passes through $L_1\cap L_4$ and $L_3\cap L_5$ or $L_1\cap L_4$ and $L_3\cap L_6$ so that each of  $L_3$, $L_4$ and $L_9$ will pass through 3 multiple points. If $L_9$ passes through $L_1\cap L_4$ and $L_3\cap L_5$, then $L_7$ must pass through $L_1\cap L_6$ and $L_2\cap L_5$ so that each of  $L_2$ and $L_6$ passes through $3$ multiple points.  Consequently,  the arrangement is  lattice isomorphic to the arrangement in Figure \ref{n4=3-1} merged with $L_{10}$ at infinity.
One can check that the moduli space is irreducible (cf. calculations in proofs of forthcoming theorems). If $L_9$ passes through $L_1\cap L_4$ and $L_3\cap L_6$, then $L_7$ must pass through $L_2\cap L_5$ but not $L_1\cap L_6$ (otherwise the arrangement cannot be realized). In this case, we notice that the dual arrangement (in the sense that multiple points go to lines and lines go to points) consists of $9$ lines and $10$ triple points such that each line passes through at least $3$ triple points. By \cite{Ye2011} Proposition 3.8, it is lattice isomorphic to the $\mathcal{A}^{\pm\sqrt{-1}}$ arrangement (see \cite{Ye2011} Example 2.3). Hence the moduli space $\mathcal{M}^c_{\mathcal{A}}$ is irreducible.
\begin{figure}[htbp]
\begin{minipage}[b]{0.5\linewidth}
\centering
\begin{tikzpicture}[domain=-2:4, scale=0.6]
\tikzstyle{every node}=[font=\footnotesize]
\draw[domain=-2:4] plot (0, \x) node[above]{$L_5$};
\draw[domain=-2:4] plot (1, \x) node[above]{$L_4$};
\draw[domain=-2:4] plot (2, \x) node[above]{$L_6$};

\draw[domain=4:-2] plot (\x, 0) node[left]{$L_2$};
\draw[domain=4:-2] plot (\x, 1) node[left]{$L_3$};
\draw[domain=4:-2] plot (\x, 2) node[left]{$L_1$};

\draw[domain={-2:4}] plot (\x, \x) node[above]{$L_{7}$};
\draw[domain={4:-2}] plot (\x, 2-\x) node[above]{$L_{8}$};
\draw[domain=-2:3] plot (\x, {\x+1}) node[above]{$L_9$};
\end{tikzpicture}
\caption{\label{n4=3-1}}
\end{minipage}
%%%%
%%%%
\begin{minipage}[b]{0.5\linewidth}
\centering
\begin{tikzpicture}[scale=0.6]
\tikzstyle{every node}=[font=\footnotesize]
\draw[domain=-2.5:3.5] plot (0, \x) node[above]{$L_5$};
\draw[domain=-2.5:3.5] plot (1, \x) node[above]{$L_4$};
\draw[domain=-2.5:3.5] plot ({(sqrt(5)+3)/2}, \x) node[above]{$L_6$};

\draw[domain=3.5:-2.5] plot (\x, 0) node[left]{$L_2$};
\draw[domain=3.5:-2.5] plot (\x, 1) node[left]{$L_3$};
\draw[domain=3.5:-2.5] plot (\x, {(sqrt(5)+3)/2}) node[left]{$L_1$};

\draw[domain={-2.5:3.5}] plot (\x, \x) node[above]{$L_{7}$};
\draw[domain={3.5:-2.5}] plot (\x, {-2/(1+sqrt(5))*(\x-(3+sqrt(5))/2)}) node[left]{$L_{8}$};
\draw[domain=3:-2.5] plot (\x, {-\x+1}) node[above]{$L_9$};
%\draw[domain=-0.2:0.2, dashed, blue] plot (\x, {-0.5*\x+1});
%\draw[domain=-0.2:-1.5, red] plot (\x, {-19/8*\x+5/8}) node[above]{$L_9$};
\end{tikzpicture}
\caption{\label{n4=3-2}}
\end{minipage}
\end{figure}

Assume that $L_8\cap L_9$ is also a triple point  (see Figure \ref{n4=3-2}). We may assume that  $L_8\cap L_9$ is on $L_1$.  First, we note that $L_3\cap L_9$ and $L_2\cap L_4\cap L_9$ must be triple points so that each of $L_3$ and $L_4$ will pass through 3 multiple points. We may assume that $L_3\cap L_9$ is on $L_5$. Then $L_2\cap L_6\cap L_8$ and $L_1\cap L_6\cap L_7$ must be triple points so that $L_6$ will pass through 3 multiple points. Now consider  $L_5$. We see that $L_2\cap L_5\cap L_7$ must be a triple point so that $L_5$ will pass through 3 multiple points. Let $L_{10}$ be the line at infinity.  It is easy to check that $L_7$ can not be parallel to $L_9$, i.e., $L_7\cap L_9$ can not be on the line at infinity $L_{10}$. We have a contradiction.

Assume that the 3 quadruple points are collinear in $\mathcal{A}$ (see Figure
\ref{fig:n4-A}.). We may assume that $L_1\cap L_2\cap L_3\cap L_{10}$, $L_4\cap L_5\cap
L_6\cap L_{10}$ and $L_7\cap L_8\cap L_9\cap L_{10}$ are the quadruple points.  Consider the sub-arrangment $\mathcal{A}':=\mathcal{A}\setminus\{L_{10}\}$. It has at most 10 triple points and no multiple points of higher multiplicities. Moreover, we note that each line of $\mathcal{A}'$ passes through at least  3 triple points and the 3 triple points $L_1\cap L_2\cap L_3$, $L_4\cap L_5\cap
L_6$, $L_7\cap L_8\cap L_9$ are collinear.  It is not difficult to check that $\mathcal{A'}$ is lattice isomorphic to one of the arrangements in Figure
\ref{fig:n4-A}.
\begin{figure}[htbp]
\centering \subfigure{\begin{tikzpicture}[domain=-2:4, scale=0.6] \tikzstyle{every node}=[font=\footnotesize]
\draw[domain=-2:4] plot (0, \x) node[above]{$L_4$};
\draw[domain=-2:4] plot (1, \x) node[above]{$L_5$};
\draw[domain=-2:4] plot (2, \x) node[above]{$L_6$};

\draw[domain=4:-2] plot (\x, 0) node[left]{$L_1$};
\draw[domain=4:-2] plot (\x, 1) node[left]{$L_2$};
\draw[domain=4:-2] plot (\x, 2) node[left]{$L_3$};

\draw[domain={-1.5:3.5}] plot (\x, \x) node[right]{$L_{8}$};
\draw[domain=-2:3] plot (\x, {\x+1}) node[right]{$L_7$};
\draw[domain=-1:4] plot (\x, {(\x-1)}) node[right]{$L_9$};
\end{tikzpicture}}
\hspace{2cm}
\subfigure{\begin{tikzpicture}[domain=-1.5:4.5,scale=0.6] \tikzstyle{every node}=[font=\footnotesize]
\draw[domain=-1.5:3.5] plot (0, \x) node[above]{$L_4$};
\draw[domain=-1.5:3.5] plot (1, \x) node[above]{$L_5$};
\draw[domain=-1.5:3.5] plot (3, \x) node[above]{$L_6$};

\draw[domain=4.5:-1.5] plot (\x, 0) node[left]{$L_1$};
\draw[domain=4.5:-1.5] plot (\x, 1) node[left]{$L_2$};
\draw[domain=4.5:-1.5] plot (\x, 3/2) node[left]{$L_3$};

\draw plot (\x, {\x/2}) node[right]{$L_7$};
\draw plot (\x, {\x/2+1}) node[right]{$L_8$};
\draw plot (\x, {(\x-1)/2}) node[right]{$L_9$};
\end{tikzpicture}} \caption{\label{fig:n4-A}}
\end{figure}
Hence the moduli space $\mathcal{M}_{\mathcal{A}}$ is irreducible.
\end{proof}

\subsection{$n_4=2$}
Since $n_4=2$, then there are at most 9 triple points.  We first consider that the 2 quadruple points are not collinear.

\begin{thm}Let $\mathcal{A}$ be a non-reductive arrangement of $10$ lines such that $n_4=2$ and $n_r=0$ for $r\geq 5$. If the $2$ quadruple points are not on the same line of $\mathcal{A}$, then the quotient moduli space $\mathcal{M}^c_{\mathcal{A}}$ is irreducible.
\end{thm}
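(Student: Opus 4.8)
The plan is to analyze the configuration directly, since the hypothesis that $\mathcal{A}$ is non-reductive means every line passes through at least $3$ multiple points, so Theorem~\ref{lemma:extend-irreduciblity} cannot be used to peel off a line and reduce to a smaller arrangement. First I would fix notation $P_1=L_1\cap L_2\cap L_3\cap L_4$ and $P_2=L_5\cap L_6\cap L_7\cap L_8$ for the two quadruple points; since they are not collinear these involve $8$ distinct lines, leaving $L_9,L_{10}$. Combining Lemma~\ref{lemma:intersection-formula} with Theorem~\ref{thm:Hirzebruch} bounds the number of triple points, and I would use this together with non-reductivity to control the combinatorics.

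The key structural observation is that no triple point can contain two lines of the same pencil, because any two of $L_1,\dots,L_4$ meet only at $P_1$ (and similarly for $P_2$); hence every triple point must lie on $L_9$ or on $L_{10}$. I would then argue that no pencil line can pass through $O:=L_9\cap L_{10}$ --- otherwise that line would meet too few multiple points --- so $O$ is an ordinary double point and each of the eight pencil lines meets $L_9$ and $L_{10}$ in two distinct triple points. This forces exactly two perfect matchings $\sigma,\tau$ between the two pencils, realised respectively on $L_9$ and $L_{10}$, gives $n_3=8$, and shows that, up to relabelling the lines (which preserves the isomorphism type of $\mathcal{M}^c_{\mathcal{A}}$), the lattice is determined by the conjugacy class of the permutation $\pi=\sigma^{-1}\tau\in S_4$.

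Next I would introduce the projectivity that drives everything: projection from $P_1$ and from $P_2$ give perspectivities $\phi_1,\phi_2\colon L_9\to L_{10}$, each fixing $O$, and $\psi:=\phi_2^{-1}\phi_1$ is a projectivity of $L_9$ fixing $O$ that permutes the four contact points $a_1,\dots,a_4$ by $\pi$. Since $\psi^{k}$ fixes $O$ and all four $a_i$ when $k$ is the order of $\pi$, the map $\psi$ has finite order exactly $\mathrm{ord}(\pi)$ and, having two fixed points, its multiplier $\lambda$ is a primitive $\mathrm{ord}(\pi)$-th root of unity. This immediately rules out $\pi=\mathrm{id}$ and $\pi$ a transposition, since then $\psi$ would fix three points and be the identity while still having to move some $a_i$ (equivalently $\phi_1=\phi_2$, forcing $P_1=P_2$); only the cycle types $2^2$, $3\,1$ and $4$ survive.

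Finally I would compute the moduli. A standard dimension count ($20$ parameters for the ten lines, minus $8$ for $PGL(3,\mathbb{C})$, minus $2+2+8=12$ incidence conditions) shows $\mathcal{M}_{\mathcal{A}}$ is $0$-dimensional; after normalising $L_9,L_{10}$ and the two fixed points of $\psi$ the whole arrangement is rigid once $\lambda$ is chosen. For cycle type $2^2$ one gets $\lambda=-1$, a single real point, so $\mathcal{M}_{\mathcal{A}}$ is already irreducible; for types $3\,1$ and $4$ the two admissible primitive roots ($\omega,\bar\omega$ and $i,-i$) are complex conjugate, so $\mathcal{M}_{\mathcal{A}}$ consists of a pair of complex-conjugate points that are identified in $\mathcal{M}^c_{\mathcal{A}}$. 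In every case $\mathcal{M}^c_{\mathcal{A}}$ is a single reduced point, hence irreducible. The hard part will be the explicit realisability check accompanying this last step: verifying that each admissible cycle type is actually realised by lines over $\mathbb{C}$, that the chosen multiplier produces no accidental extra incidences (so that $L(\mathcal{A})$ is exactly the prescribed lattice), and that there is no residual continuous modulus --- i.e., that each surviving case yields precisely one point, or one conjugate pair, and nothing more.
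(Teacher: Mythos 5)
Your reduction of the lattice classification to the permutation $\pi=\sigma^{-1}\tau$ and the projectivity $\psi=\phi_2^{-1}\phi_1$ is an elegant reformulation (the paper instead normalizes coordinates and enumerates the possible sets of triple points on $L_{10}$ directly), but your list of surviving cycle types is wrong, and the error is not cosmetic. The correct obstruction is not ``$\psi$ fixes three points'' but ``$\pi$ has a fixed point'': if $\pi(i)=i$, then $\psi(a_i)=a_i$ means $\phi_1(a_i)=\phi_2(a_i)$, i.e.\ $L_i\cap L_{10}=L_{\sigma(i)}\cap L_{10}$. The distinct lines $L_i$ and $L_{\sigma(i)}$ meet in a single point, which by the triple point on $L_9$ already lies on $L_9$; so that common point is $O=L_9\cap L_{10}$, and it acquires multiplicity four (on $L_i,L_{\sigma(i)},L_9,L_{10}$). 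This contradicts $n_4=2$ and your own observation that no pencil line passes through $O$. Hence \emph{every} fixed point of $\pi$ is forbidden, so only the fixed-point-free classes $2^2$ and $4$ occur. Cycle type $3\,1$, which you retain and claim is realized by a conjugate pair of arrangements, does not exist; this is consistent with the paper, which finds exactly two lattice types in this situation.

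The second error is the rigidity claim. The count $20-8-12=0$ only gives the \emph{expected} dimension; the incidence conditions need not be independent, and in the $2^2$ case they are not. Concretely, writing $\psi$ as $s\mapsto ms+k$ in an affine chart on $L_9$ (its two fixed points are $O$ and $L_9\cap\overline{P_1P_2}$), the conditions $\psi(a_1)=a_4$, $\psi(a_4)=a_1$, $\psi(a_2)=a_3$, $\psi(a_3)=a_2$ amount to $m=-1$ together with the \emph{single} equation $a_1+a_4=a_2+a_3\,(=k)$; after normalizing, the ratio $a_2/a_1$ survives as a genuine continuous modulus. So for type $2^2$ the moduli space $\mathcal{M}_{\mathcal{A}}$ is a one-dimensional irreducible family, not a single real point --- exactly what the paper's computation shows, where this case gives $t_2=t_1+1$ with $t_1$ a free parameter and $L_{10}=\{x+y=(t_1+1)z\}$. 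Your final assertion ``in every case $\mathcal{M}^c_{\mathcal{A}}$ is a single reduced point'' is therefore false. The theorem itself survives, because an irreducible one-dimensional $\mathcal{M}_{\mathcal{A}}$ has irreducible conjugation quotient, while the $4$-cycle case does give a complex-conjugate pair of points identified in $\mathcal{M}^c_{\mathcal{A}}$; but this is a different argument from the one you gave, and it is precisely the ``no residual continuous modulus'' verification you deferred as the hard part that fails.
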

\proof
We may assume that $L_1\cap L_2\cap L_3\cap L_4$ and $L_5\cap L_6\cap L_7\cap L_8$ are the two quadruple points. %By assumption, there should be at least 8 triple points.
Then each of $L_9$ and $L_{10}$ must pass through 4 of the points in $(L_1\cup L_2\cup L_3\cup L_4)\cap (L_5\cup L_6\cup L_7\cup L_8)$ so that each line will pass through at least 3 multiple points. By lattice isomorphism, we may assume that  $L_i\cap L_{9-i}\cap L_9$, where $i=1,2,3,4$, and $L_4\cap L_8\cap L_{10}$ are triple points (see Figure \ref{n4=2-1}).
\begin{figure}[htbp]
\centering
\begin{tikzpicture}[domain=-2:4,scale=0.6]
\tikzstyle{every node}=[font=\footnotesize]
\draw[domain=-1:4] plot (0, \x) node[above]{$L_5$};
\draw[domain=-1:4] plot (1, \x) node[above]{$L_6$};
\draw[domain=-1:4] plot (2, \x) node[above]{$L_7$};
\draw[domain=-1:4] plot (3, \x) node[above]{$L_8$};

\draw[domain=4:-1] plot (\x, 0) node[left]{$L_4$};
\draw[domain=4:-1] plot (\x, 1) node[left]{$L_3$};
\draw[domain=4:-1] plot (\x, 2) node[left]{$L_2$};
\draw[domain=4:-1] plot (\x, 3) node[left]{$L_1$};

\draw[domain={-1:4}] plot (\x, \x) node[above]{$L_{9}$};
\draw[domain={2.5:3.5}] plot (\x, 3-\x) node[below right]{$L_{10}$};
\end{tikzpicture}
\caption{\label{n4=2-1}}
\end{figure}
 Then there are only 2 lattice isomorphism classes of arrangements satisfying our assumption. They are determined by triple points on $L_{10}$. The possible sets of triple points on $L_{10}$ are  $\{L_1\cap L_5, L_2 \cap L_6, L_3\cap L_7, L_4\cap L_8\}$, $\{L_1\cap L_7, L_2\cap L_6, L_3\cap L_5, L_4\cap L_8\}$, and $\{L_1\cap L_6, L_2 \cap L_5, L_3\cap L_7, L_4\cap L_8\}$.  However, by a permutation $(2, 3)(6, 7)$, the last two sets determine the same arrangement.

Assume that  $L_1=\{x=t_2z\}$, $L_2=\{x=t_1z\}$, $L_3=\{x=z\}$, $L_4=\{x=0\}$, $L_5=\{y=0\}$, $L_6=\{y=z\}$,  $L_7=\{y=t_1z\}$, $L_8=\{y=t_2z\}$ and $L_9=\{x=y\}$ where $t_1, t_2\in\mathbb{C}\setminus\{0 ,1\}$ and $t_1\neq t_2$.
If $L_{10}$ contains the set of points $\{L_i\cap L_{i+4} \mid i=1, 2, 3, 4\}$, then we get $t_2=t_1+1$ and $L_{10}=\{x+y=(t_1+1)z\}$. Hence the moduli space $\mathcal{M}_{\mathcal{A}}$ is irreducible.
If $L_{10}$ contains the set $\{L_1\cap L_7, L_2\cap L_6, L_3\cap L_5, L_4\cap L_8\}$, then $t_1=1+(\pm\sqrt{-1})$, $t_2=\pm\sqrt{-1}$, and $L_{10}=\{y=(t_1-1)x+z\}$. Therefore $\mathcal{M}^c_{\mathcal{A}}$ is irreducible.

\iffalse Defining equations of the arrangements can be written as:
\begin{equation}\label{n_4=2-1}
\scriptstyle xy(x-z)(y-z)(x-t z)(y-t z)(x-(t-1)z)(y-(t-1)z)(x-y)(y-(t-1)x-z)=0.
\end{equation}
where $t=1+(\pm\sqrt{-1})$
\fi
\qed

Now we consider the case that the 2 quadruple points are collinear in $\mathcal{A}$.

\begin{thm}
Let $\mathcal{A}$ be a non-reductive arrangement of $10$ lines such that $n_4=2$ and $n_r=0$ for $r\geq 5$. If the $2$ quadruple points are on the same line of $\mathcal{A}$, then either the quotient moduli space $\mathcal{M}^c(\mathcal{A})$ is irreducible, or $\mathcal{A}$ is isomorphic to one of the arrangements defined by equations \eqref{n_4=2-3}, \eqref{n_4=2-5}, \eqref{n_4=2-4} and \eqref{n_4=2-7}.
\end{thm}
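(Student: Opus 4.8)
The plan is to fix coordinates adapted to the two collinear quadruple points, reduce the problem to a finite list of lattice types by combinatorial constraints, and then compute the moduli space of each type, isolating the finitely many that stay reducible after the complex-conjugation quotient. First I would normalize coordinates. Writing the two quadruple points as $P=L_1\cap L_2\cap L_3\cap L_4$ and $Q=L_1\cap L_5\cap L_6\cap L_7$ on the common line $L_1$, I use $PGL(3,\mathbb{C})$ to set $P=[0:0:1]$, $Q=[1:0:0]$ and $L_1=\{y=0\}$. Then $L_2,L_3,L_4$ lie in the pencil $\{y=\mu_i x\}$ through $P$ and $L_5,L_6,L_7$ in the pencil $\{y=\lambda_j z\}$ through $Q$, and the residual diagonal freedom lets me normalize two of these slopes. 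Since $n_r=0$ for $r\geq 5$, the three remaining lines $L_8,L_9,L_{10}$ avoid both $P$ and $Q$, and by non-reductivity each of them must pass through at least three triple points.

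Next comes the combinatorial reduction. Lemma~\ref{lemma:intersection-formula} together with Theorem~\ref{thm:Hirzebruch} bounds $n_3$ from above, while counting triple-point incidences --- $L_1$ needs at least one, each of $L_2,\dots,L_7$ at least two, and each of $L_8,L_9,L_{10}$ at least three --- bounds $n_3$ from below, confining it to a short range. The decisive structural fact is that two lines of the same pencil meet only at $P$ (respectively $Q$), so every triple point off $L_1$ must combine one line of the $P$-pencil, one of the $Q$-pencil, and/or one of $L_8,L_9,L_{10}$. Tracking how the forced triple points can be distributed, and using the symmetry that permutes the two pencils to identify isomorphic configurations, I would cut the possibilities down to finitely many lattice types. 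Whenever deleting $L_1$ (or one of $L_8,L_9,L_{10}$) produces a nine-line sub-arrangement matching a known class, I would invoke the classification of \cite{Ye2011} and Lemma~\ref{lemma:extend-irreduciblity} to dispatch the case quickly, just as in the proofs of Theorems~\ref{prop:r=5} and \ref{prop:r=4}.

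Then, for each surviving type, I would translate the collinearity conditions imposed on $L_8,L_9,L_{10}$ into polynomial equations in the slope parameters, eliminate to a single governing polynomial, and read off $\mathcal{M}_{\mathcal{A}}$ as the resulting parameter variety with the degenerate loci --- where an extra coincidence raises a multiplicity or merges two points --- removed. A type contributes an irreducible $\mathcal{M}_{\mathcal{A}}$ exactly when this variety is irreducible; when it splits only into complex-conjugate pieces, the quotient $\mathcal{M}^c_{\mathcal{A}}$ becomes irreducible. The finitely many types whose governing polynomial has genuinely distinct components not interchanged by conjugation, so that even $\mathcal{M}^c_{\mathcal{A}}$ remains reducible, are precisely the arrangements recorded by equations \eqref{n_4=2-3}, \eqref{n_4=2-5}, \eqref{n_4=2-4} and \eqref{n_4=2-7}.

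The hard part will be the size and bookkeeping of this case analysis, compounded by two delicate issues: deciding realizability over $\mathbb{C}$, since some combinatorially admissible distributions of triple points force incompatible coincidences and cannot be realized; and correctly separating parameter spaces that are genuinely disconnected from those connected only through complex conjugation. The computational crux is the elimination step together with the careful excision of degenerate parameter values, where B\'ezout- or resultant-type control of the number of solution components is what ultimately distinguishes the irreducible cases from the four exceptions.
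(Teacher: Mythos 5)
Your outline follows the same skeleton as the paper's proof: normalize coordinates so the two quadruple points become base points of two pencils meeting along a common line, use non-reductivity and incidence counts to constrain where triple points can sit, enumerate the lattice types up to the pencil-swapping symmetry, and then compute the parameter space of each type, with reducibility of $\mathcal{M}^c_{\mathcal{A}}$ surviving only when the parameter solutions are distinct real (non-conjugate) values. Two points, however, are genuine gaps rather than deferred routine work.

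First, the shortcut you propose --- deleting a line to reach a known nine-line class and invoking Lemma~\ref{lemma:extend-irreduciblity} --- is unavailable here: that lemma requires the deleted line to pass through at most two multiple points, and non-reductivity says precisely that no line of $\mathcal{A}$ has this property. (The proofs of Theorems~\ref{prop:r=5} and~\ref{prop:r=4}, which you cite as a model, identify sub-arrangements such as Falk--Sturmfels or $\mathcal{A}^{\pm\sqrt{-1}}$ directly; they do not delete a line and apply that lemma, and the paper's proof of the present theorem uses neither device.) Second, and more importantly, the entire content of the theorem is the case analysis you defer. The paper's organizing step is concrete: non-reductivity of the common line $L_{10}$ forces exactly one triple point $L_7\cap L_8\cap L_{10}$ on it (a second such triple point would require a fourth free line), and the classification then branches on whether the remaining free line $L_9$ meets $L_7$ and/or $L_8$ in triple points (neither, both, or exactly one), each branch being settled by explicit coordinates and checks for extra coincidences, empty moduli, or lattice isomorphisms given by explicit permutations. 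Without executing some such branching one cannot conclude that the reducible-after-quotient cases are \emph{exactly} the four equations \eqref{n_4=2-3}, \eqref{n_4=2-5}, \eqref{n_4=2-4} and \eqref{n_4=2-7}; in your write-up this list is asserted as the expected output of a computation that is never performed, so the theorem is restated rather than proved.
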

\begin{proof}
We may assume that $L_1\cap L_2\cap L_3\cap L_{10}$ and $L_4\cap L_5\cap L_6\cap L_{10}$ are the two quadruple points and $L_7\cap L_8\cap L_{10}$ is the only triple point on $L_{10}$. Then all triple points on $L_9$ are on $L_1\cup L_2\cup \cdots L_8$.
\paragraph{Case 1:}
Assume that none of $L_9\cap (L_7\cup L_8)$ is a triple point of $\mathcal{A}$. Then $L_9$ should pass through exactly 3 triple points in $(L_1\cup L_2\cup\cdots L_6)\setminus(L_7\cup L_8\cup L_{10})$. We may assume that the 3 triple points on $L_9$ are $L_i\cap L_{7-i}$, where $i\in\{1, 2, 3\}$. Then each of $L_7$ and $L_8$ must pass through at least 2 of $\{L_i\cap L_{3+j}\mid i, j\in\{1, 2, 3\}\}$ and $L_8\cap L_9$ is on $L_{10}$.  Moreover, since there are at most 9 triple points, then only one of $L_7$ and $L_8$ can pass through 3 of  $\{L_i\cap L_{3+j}\mid i, j\in\{1, 2, 3\}\}$. We may assume that $L_7$ passes through 2 of  $\{L_i\cap L_{3+j}\mid i, j\in\{1, 2, 3\}\}$.
By a  lattice isomorphism, we may assume that $L_2\cap L_4$ is on $L_7$.  By an automorphism of the dual of the projective plane, we may assume that $L_{10}=\{z=0\}$, $L_1=\{x=tz\}$, $L_2=\{x=z\}$, $L_3=\{x=0\}$, $L_4=\{y=0\}$, $L_5=\{y=z\}$, $L_6=\{y=tz\}$, and $L_{9}=\{x=y\}$ where $t$ is a complex number such that $t\neq 0, 1$ (see Figure \ref{n4=2-2}).
\begin{figure}[htbp]
\begin{minipage}[b]{0.5\linewidth}
\centering
\begin{tikzpicture}[domain=-2:4,scale=0.8]
\tikzstyle{every node}=[font=\footnotesize]
\draw[domain=-1:3] plot (0, \x) node[above]{$L_4$};
\draw[domain=-1:3] plot (1, \x) node[above]{$L_5$};
\draw[domain=-1:3] plot (2, \x) node[above]{$L_6$};

\draw[domain=3:-1] plot (\x, 0) node[left]{$L_3$};
\draw[domain=3:-1] plot (\x, 1) node[left]{$L_2$};
\draw[domain=3:-1] plot (\x, 2) node[left]{$L_1$};

\draw[domain={-1:3}] plot (\x, \x) node[above]{$L_{9}$};
\draw[domain={0.2:-0.2}] plot (\x, \x+1) node[below left]{$L_{7}$};
\end{tikzpicture}
\caption{\label{n4=2-2}}
\end{minipage}
%%%%
%%%%
 \begin{minipage}[b]{0.5\linewidth}
 \centering
\begin{tikzpicture}[domain=-2:4,scale=0.7]
\tikzstyle{every node}=[font=\footnotesize]
\draw[domain=-1:3.5] plot (0, \x) node[above]{$L_4$};
\draw[domain=-1:3.5] plot (1, \x) node[above]{$L_5$};
\draw[domain=-1:3.5] plot (2, \x) node[above]{$L_6$};

\draw[domain=3:-1] plot (\x, 0) node[left]{$L_3$};
\draw[domain=3:-1] plot (\x, 1) node[left]{$L_2$};
\draw[domain=3:-1] plot (\x, 2) node[left]{$L_1$};

\draw[domain={2.2:-0.6}] plot (\x, 1.5*\x) node[left]{$L_{9}$};
\draw[domain={0.5:0.8}] plot (\x, -\x+5/3);
\node at (0.4,1.5){$L_{8}$};
\draw[domain={1.2:0.8}] plot (\x, -0.5*\x+2);
\node at (1.5, 1.3) {$L_{7}$};
\end{tikzpicture}
\caption{\label{n4=2-3-6}}
\end{minipage}
\end{figure}

\begin{enumerate}
\item
Assume that $L_1\cap L_5$ is on $L_7$, then $L_7=\{y=(t-1)x+z\}$.  Since the intersection point $L_7\cap L_8$ is on $L_{10}$, then the equation of the line $L_8$ has the form $y=(t-1)x+cz$, where $c$ is a complex number and $c\neq 1$.
Since each line passes through at least 3 multiple points, then either $\{L_3\cap L_6, L_1\cap L_4\}\subset L_8$ or $\{L_3\cap L_5, L_2\cap L_6\}\subset L_8$.
If $\{L_3\cap L_6, L_1\cap L_4\}\subset L_8$, then $t=0$. We get a contradiction..
If $\{L_3\cap L_5, L_2\cap L_6\}\subset L_8$, then we have $t=2$ and $L_8=\{y=x-z\}$. It follows that $L_7\cap L_8\cap L_9\cap L_{10}$ is an extra quadruple points.  Again, this contradicts our assumption.
\item
Assume that $L_3\cap L_5$ is  in $L_7$, then $L_{7}=\{x+y=z\}$ and  $L_8=\{x+y=cz\}$, where $c\neq 1$ is a complex number. Then $L_8$ passes through exactly 2 of $\{L_1\cap L_4, L_1\cap L_5, L_2\cap L_6, L_3\cap L_6\}$. Note that the coordinates of the points in $\{L_1\cap L_4, L_1\cap L_5, L_2\cap L_6, L_3\cap L_6\}$ are of the form $(\alpha(t), \beta(t), 1)$, where $\alpha(t)$ and $\beta(t)$ are linear functions of $t$. Then we obtain a system of  two linear equations in $t$ and $c$. Thus the moduli space is either empty or irreducible.
\item
Assume that  $L_3\cap L_6$ is  in $L_7$, then $L_7=\{y=-\frac{1}{t}x+z\}$ and $L_8=\{y=-\frac{1}{t}x+cz\}$, where $c$ is a complex number such that $c\neq 1$. Since each line has at least 3 multiple points, then $L_8$ must pass through one of $L_1\cap L_4$ and $L_1\cap L_5$. If $L_1\cap L_4$ is on $L_8$, then $L_3\cap L_5$ must be on $L_8$ so that $L_5$ will pass through 3 multiple points. Then $t=-1$ which implies that $L_7\cap L_8\cap L_9\cap L_{10}$ is an extra quadruple points. This is a contradiction to our assumption.  If $L_1\cap L_5$ is on $L_8$, then $L_2\cap L_6$ should be on $L_8$ so that $L_{8}$ passes through 3 multiple points.  Then $t=1$. This is a contradiction again.
\end{enumerate}
\paragraph{Case 2:}
Assume that both $L_7\cap L_9$ and $L_8\cap L_9$ are triple points. Then at least one of $L_9\cap L_i\cap L_j$, where $i\in\{1, 2,3\}$ and $j\in\{4, 5, 6\}$, is a triple point so that $L_9$ will pass through 3 multiple points. Assume that $L_9\cap L_3\cap L_4$ is a triple point. Moreover, we may assume that $L_7\cap L_9\cap L_5$ is a triple point.
Consider the intersection of $L_1$, $L_6$ and $L_9$. There are two possibilities.
\paragraph{(1)}Consider that $L_1\cap L_6\cap L_9=\emptyset$. Then $L_2\cap L_8$ or $L_1\cap L_8$ should be in $L_9$ so that each line passes through 3 multiple points. We may assume that $L_2\cap L_8\cap L_9$ is a triple point   (see Figure \ref{n4=2-3-6}).   Consequently,  either $L_1\cap L_6$ or $L_3\cap L_6$ should be on $L_8$, otherwise $L_6$ will pass through at most 2 multiple points.
\begin{enumerate}
\item
Assume that $L_1\cap L_6$ is on $L_8$. Then $L_3\cap L_5$ should be on $L_8$ so that $L_5$ will pass through 3 multiple points. It is easy to see that $L_7$ must pass through $L_1\cap L_4$ and $L_2\cap L_6$ so that each of  $L_1$ and $L_2$ will pass through 3 multiple points.  Recall that $L_7\cap L_8$ is on $L_{10}$.  Then equations of such arrangements can be written as:
\begin{equation*}\label{n_4=2-2}\scriptstyle  xyz(x-z)(y-z)(x-t z)(y-t z)(y-\frac{t}{t-1} (x-z))(y-\frac{t}{t-1}(x+t z))(y-\frac{t^2}{t-1}x)=0\end{equation*}
where $t=\frac{1\pm\sqrt{-1}}{2}$.
\item
Assume that $L_3\cap L_6$ is on $L_8$. Then $L_1\cap L_5$ should be on $L_8$ so that $L_5$ will pass through 3 multiple points.  Consequently, we should have $\{L_1\cap L_4, L_2\cap L_6\}\subset L_7$ or $\{L_2\cap L_4, L_1\cap L_6\}\subset L_7$.
Assume that $\{L_1\cap L_4, L_2\cap L_6\}$ is in $L_7$.  Then
the equations of the arrangements can be written as:
\begin{equation}\label{n_4=2-3}\scriptstyle
xyz(x-z)(y-z)(x-t z)(y-(1-t) z)(y-x-(1-t)z)(y-x+t z)(y-(2-t)x)=0 %%%%%%%%%%%%%%%%%% y+(1-t) changed to y-(1-t). Thanks to Anna.
\end{equation}
where $t=\frac{1\pm\sqrt{5}}{2}$.
Clearly, they are real arrangements (affine pictures are shown in Figure \ref{n4=2-real-excptional-1}).
\begin{figure}[htbp]
\centering
\subfigure{}{
\begin{tikzpicture}[domain=-2:4,scale=0.8]
\tikzstyle{every node}=[font=\footnotesize]
\draw[domain=-2:2] plot (0, \x) node[above]{$L_4$};
\draw[domain=-2:2] plot (1, \x) node[above]{$L_5$};
\draw[domain=-2:2] plot (({(1+sqrt(5))/2}, \x) node[above]{$L_6$};

\draw[domain=3:-1] plot (\x, 0) node[left]{$L_3$};
\draw[domain=3:-1] plot (\x, 1) node[left]{$L_2$};
\draw[domain=3:-1] plot (\x, {(1-sqrt(5))/2}) node[left]{$L_1$};

\draw[domain={-1:3}] plot (\x, {((3-sqrt(5))/2)*\x}) node[right]{$L_{9}$};
\draw[domain={2.5:-1}] plot (\x, {\x+(1-sqrt(5))/2}) node[left]{$L_7$};
\draw[domain={3:-0.2}] plot (\x, {\x-(1+sqrt(5))/2}) node[left]{$L_8$};
\end{tikzpicture}
}
\subfigure{}{
\begin{tikzpicture}[domain=-2:4,scale=0.9]
\tikzstyle{every node}=[font=\footnotesize]
\draw[domain=-1:3] plot (0, \x) node[above]{$L_4$};
\draw[domain=-1:3] plot (1, \x) node[above]{$L_5$};
\draw[domain=-1:3] plot (({(1-sqrt(5))/2}, \x) node[above]{$L_6$};

\draw[domain=2.5:-1.5] plot (\x, 0) node[left]{$L_3$};
\draw[domain=2.5:-1.5] plot (\x, 1) node[left]{$L_2$};
\draw[domain=2.5:-1.5] plot (\x, {(1+sqrt(5))/2}) node[left]{$L_1$};

\draw[domain={1.15:-0.2}] plot (\x, {((3+sqrt(5))/2)*\x}) node[below]{$L_{9}$};
\draw[domain={1.3:-1}] plot (\x, {\x+(1+sqrt(5))/2}) node[left]{$L_7$};
\draw[domain={2.3:-1}] plot (\x, {\x-(1-sqrt(5))/2}) node[left]{$L_8$};
\end{tikzpicture}
}
\caption{\label{n4=2-real-excptional-1}}
\end{figure}
Assume that $\{L_2\cap L_4, L_1\cap L_6\}$ is in $L_7$. By the permutation $(1,6)(2,5)(3,4)(7,8)$,  we know that the arrangement is isomorphic to arrangements defined by equation \eqref{n_4=2-2}.
\end{enumerate}
\paragraph{(2)}Consider that $L_1\cap L_6\cap L_9\neq\emptyset$. Then $L_2\cap L_8$ should be on $L_9$ by assumption. Moreover, $L_8$ should pass through either $L_1\cap L_5$ or $L_3\cap L_5$ so that $L_5$ will pass through 3 multiple points. We may assume that $L_8$ passes through $L_3\cap L_5$ (see Figure \ref{n4=2-3}).
\begin{figure}[htbp]
 \begin{minipage}[b]{0.5\linewidth}
 \centering
\begin{tikzpicture}[domain=-2:4,scale=0.7]
\tikzstyle{every node}=[font=\footnotesize]
\draw[domain=-1:3.5] plot (0, \x) node[above]{$L_4$};
\draw[domain=-1:3.5] plot (1, \x) node[above]{$L_5$};
\draw[domain=-1:3.5] plot (2, \x) node[above]{$L_6$};

\draw[domain=3:-1] plot (\x, 0) node[left]{$L_3$};
\draw[domain=3:-1] plot (\x, 1) node[left]{$L_2$};
\draw[domain=3:-1] plot (\x, 3) node[left]{$L_1$};

\draw[domain={2.2:-0.6}] plot (\x, 1.5*\x) node[left]{$L_{9}$};
\draw[domain={0.5:1.167}] plot (\x, -3*\x+3);
\node at (1.5,-0.5){$L_{8}$};
\draw[domain={1.2:0.8}] plot (\x, -0.5*\x+2);
\node at (1.52, 1.33) {$L_{7}$};
\end{tikzpicture}
\caption{\label{n4=2-3}}
\end{minipage}
%%%%
%%%%
\begin{minipage}[b]{0.5\linewidth}
\centering
\begin{tikzpicture}[domain=-2:4,scale=0.7]
\tikzstyle{every node}=[font=\footnotesize]
\draw[domain=-1:3.5] plot (0, \x) node[above]{$L_4$};
\draw[domain=-1:3.5] plot (1, \x) node[above]{$L_5$};
\draw[domain=-1:3.5] plot (2.5, \x) node[above]{$L_6$};

\draw[domain=3.5:-1] plot (\x, 0) node[left]{$L_3$};
\draw[domain=3.5:-1] plot (\x, 1) node[left]{$L_2$};
\draw[domain=3.5:-1] plot (\x, 2) node[left]{$L_1$};

\draw[domain={-1:3.5}] plot (\x, \x) node[above]{$L_{9}$};
\draw[domain={1.5:3.2}] plot (\x, -2*\x+6) node[below]{$L_7$};
\end{tikzpicture}
\caption{\label{n4=2-4}}
\end{minipage}
\end{figure}
Now look at $L_1$. One of $L_7$ and $L_8$ must pass through $L_1\cap L_4$ so that $L_1$ passes through 3 multiple points.
\begin{enumerate}
\item Assume that $L_7$ passes through $L_1\cap L_4$. Then $L_7$ must also pass through $L_2\cap L_6$ so that each of $L_2$ and $L_6$ passes through 3 multiple points. Then equations of arrangements with such an intersection lattice can be written as: \begin{equation} \label{n_4=2-5}\scriptstyle
xyz(x-z)(y-z)(x-t_1 z)(y-t_2 z)(y-x)(y-(t_2-1)x - z)(y-(t_2-1)x-(t_1-1)z)=0
 \end{equation}
where $t_1=(t_2-1)^2$ and $(t_2-1)^3-(t_2-1)^2+1=0$. Therefore there are 1 real and 2 complex arrangements in the moduli space.  The real arrangement is shown in Figure \ref{n4=1-real-2-complex}.
 \begin{figure}[htbp]
\centering
\begin{tikzpicture}[domain=-2:4,scale=1]
\tikzstyle{every node}=[font=\footnotesize]
\draw[domain=-1:1.8] plot (0, \x) node[above]{$L_4$};
\draw[domain=-1:1.8] plot (1, \x) node[above]{$L_6$};
\draw[domain=-1:1.8] plot ({0.5698402909980533}, \x) node[above]{$L_5$};

\draw[domain=2:-1] plot (\x, 0) node[left]{$L_3$};
\draw[domain=2:-1] plot (\x, 1) node[left]{$L_1$};
\draw[domain=2:-1] plot (\x, {0.24512233375330719}) node[left]{$L_2$};

\draw[domain={-1:1.8}] plot (\x, \x) node[above]{$L_{9}$};
\draw[domain={2:-1}] plot (\x, {-0.7548776662466928*\x+1}) node[above]{$L_7$};
\draw[domain={1.8:-1}] plot (\x, {-0.7548776662466928*\x+0.4301597090019467}) node[above left]{$L_8$};
\end{tikzpicture}
\caption{\label{n4=1-real-2-complex}}
\end{figure}
\item Assume that $L_8$ passes through $L_1\cap L_4$. Then $L_2\cap L_7$ and $L_6\cap L_7$ must be triple points so that each of $L_2$ and $L_6$ passes through 3 multiple points.  If the 2 points $L_2\cap L_7$ and $L_6\cap L_7$ coincide, i.e., $L_2\cap L_6\cap L_7$ is a triple point, then the arrangement isomorphic to the one in previous case by a permutation (1, 4)(2, 5)(3, 6)(7, 8). Assume that $L_2\cap L_6\cap L_7$ is empty. Then $L_2\cap L_4\cap L_7$ and $L_3\cap L_6\cap L_7$ should be triple points. It is not hard to check that the moduli space is empty.
\end{enumerate}
\paragraph{Case 3:}
Assume that only one of $L_9\cap L_7$ and $L_9\cap L_8$ is a triple point. We may assume that $L_1\cap L_7\cap L_9$ is a triple point.  Then there will be 2 more triple points on $L_9$. Those 2 triple points are in $\{L_i\cap L_{3+j}\mid i, j\in\{2, 3\}$. Therefore, we may assume that the 3 triple points on $L_9$ are $L_3\cap L_4\cap L_9$, $L_2\cap L_5\cap L_9$, and $L_1\cap L_7\cap L_9$ (see Figure \ref{n4=2-4}). Consequently,  either $L_2\cap L_6$ or $L_3\cap L_6$ has to be on $L_7$ so that $L_6$ will pass through 3 multiple points.  Note that up to a permutation $(2, 3)(4, 5)$ we may assume that $L_2\cap L_6$ is on $L_7$. By an automorphism of the dual of the projective plane, we can write $L_{10}=\{z=0\}$, $L_1=\{x=t_1z\}$, $L_2=\{x=z\}$, $L_3=\{x=0\}$, $L_4=\{y=0\}$, $L_5=\{y=z\}$, $L_6=\{y=t_2z\}$, $L_9=\{x=y\}$, $L_7=\{y=\frac{t_2-1}{t_2-t_1}(x-t_1z)+z\}$ ,and $L_8=\{y=\frac{t_2-1}{t_2-t_1}x+cz\}$, where $t_1, t_2\in \mathbb{C}\setminus\{0, 1\}$, $t_1\neq t_2$,  and $c\neq \frac{t_1(t_2-1)}{t_2-t_1}+1\in\mathbb{C}$.

By our assumption, $L_8$ must pass through one of $\{L_1\cap L_4, L_1\cap L_5, L_1\cap L_6\}$ so that $L_1$ will pass through 3 multiple points. %Similarly, $L_8$ must pass through one of $\{L_1\cap L_6, L_3\cap L_6\}$ so that $L_6$ will pass through 3 multiple points.  On the other hand, $L_7$ can pass though one more double point which is $Q_{32}$.

Let $L_1\cap L_4$ be on $L_8$. Then $L_8$ must pass through $L_3\cap L_6$ so that $L_6$ will pass through 3 multiple points. Consequently, $L_7$ must pass through $L_3\cap L_5$ so that $L_5$ will passes through 3 multiple points. Under such a intersection lattice structure, it is not difficult to see that the defining equation of the arrangement can be written as
\begin{equation} \label{n_4=2-4}\scriptstyle
xyz(x-z)(y-z)(x-t^2 z)(y-t z)(y-x)(y+\frac{1}{t}x-t z)(y+\frac{1}{t}(x-z))=0
 \end{equation}
 where $t=\frac{-1\pm\sqrt{5}}{2}$.~%  we must have $t_2=t_1+1$ and $t_1^2+t_1-1=0$.
From the equation, we know that the intersection lattice has real realizations (see Figure \ref{n4=2-real-excptional-2}. The line $L_{10}$ is at the infinity).
 \begin{figure}[htbp]
\centering
\subfigure{}{
\begin{tikzpicture}[domain=-2:4,scale=1]\tikzstyle{every node}=[font=\footnotesize]
\draw[domain=-1.5:2] plot (0, \x) node[above]{$L_4$};
\draw[domain=-1.5:2] plot (1, \x) node[above]{$L_5$};
\draw[domain=-1.5:2] plot ({(3-sqrt(5))/2}, \x) node[above]{$L_6$};

\draw[domain=2:-1] plot (\x, 0) node[left]{$L_3$};
\draw[domain=2:-1] plot (\x, 1) node[left]{$L_2$};
\draw[domain=2:-1] plot (\x, {(-1+sqrt(5))/2}) node[left]{$L_1$};

\draw[domain={-1:2}] plot (\x, \x) node[above]{$L_{9}$};
\draw[domain={1.2:-0.6}] plot (\x, {-((1+sqrt(5))/2)*\x+(-1+sqrt(5))/2}) node[left]{$L_8$};
\draw[domain={1.8:-0.2}] plot (\x, {-((1+sqrt(5))/2)*\x+(1+sqrt(5))/2}) node[left]{$L_7$};
\end{tikzpicture}
}
\hspace{2cm}
\subfigure{}{
\begin{tikzpicture}[domain=-2:4,scale=0.8]\tikzstyle{every node}=[font=\footnotesize]
\draw[domain=-2:1.8] plot (0, \x) node[above]{$L_4$};
\draw[domain=-2:1.8] plot (1, \x) node[above]{$L_5$};
\draw[domain=-2:1.8] plot ({(3+sqrt(5))/2}, \x) node[above]{$L_6$};

\draw[domain=3:-2] plot (\x, 0) node[left]{$L_3$};
\draw[domain=3:-2] plot (\x, 1) node[left]{$L_2$};
\draw[domain=3:-2] plot (\x, {(-1-sqrt(5))/2}) node[left]{$L_1$};

\draw[domain={-2:1.8}] plot (\x, \x) node[above]{$L_{9}$};
\draw[domain={-0.5:3}] plot (\x, {-((1-sqrt(5))/2)*\x+(-1-sqrt(5))/2}) node[right]{$L_8$};
\draw[domain={-2:3}] plot (\x, {-((1-sqrt(5))/2)*\x+(1-sqrt(5))/2}) node[right]{$L_7$};
\end{tikzpicture}
}
\caption{\label{n4=2-real-excptional-2}}
\end{figure}

Let $L_1\cap L_5$ be on  $L_8$.  Then $L_8$ must pass through $L_3\cap L_6$ so that $L_6$ will pass through 3 multiple points. Moreover, $L_8$ must pass through $L_2\cap L_4$ so that $L_4$ will pass through at least 3 multiple points. % Moreover, $L_8$ must pass through $L_3\cap L_6$ or $L_6\cap L_{9}$ so that $L_6$ will pass through 3 multiple points.
It is not difficult to see that the defining equation of the arrangement can be written as
\begin{equation*} \label{n_4=2-6}\scriptstyle
xyz(x-z)(y-z)(x-(t-1)z)(y-t z)(y-x)(y-(t-1)x - 2z)(y - (t -1)x- z)=0
 \end{equation*}
where $t=1\pm\sqrt{-1}$. So the complex conjugate quotiented moduli space $\mathcal{M}^c_{\mathcal{A}}$ is irreducible.

Let $L_1\cap L_6$ be on  $L_8$. Then $L_8$ must pass through $L_2\cap L_4$ so that $L_4$ will pass through at least 3 multiple points.  Consequently,  either $L_7$ or $L_8$ must pass through $L_3\cap L_5$ so that each of $L_5$ and $L_3$ will pass through 3 multiple points.
\begin{enumerate}
 \item Assume that $L_3\cap L_5$ is on  $L_8$. By writing down the defining equation, %We have $t_1=1/3$ and $t_2=2/3$. Therefore,
 we see that the moduli space is irreducible.
 \item Assume that $L_3\cap L_5$ is on  $L_7$. Then %,  $t_2=2t_1$, $2t_1^2-4t_1+1=0$ and
 defining equations can be written as follows:
 \begin{equation} \label{n_4=2-7}\scriptstyle
xyz(x-z)(y-z)(x-(1\pm\sqrt{2}/2)z)(y- (2\pm\sqrt{2})z)(y-x)(y\mp\sqrt{2}x\pm\sqrt{2}z)(y\mp\sqrt{2}x-z)=0.
 \end{equation}
 Thus, we have two real arrangements (see Figure \ref{n4=2-real-excptional-3} without $L_{10}$ which is the line at the infinity).
  \begin{figure}[htbp]
\centering
\subfigure{}{
\begin{tikzpicture}[domain=-2:4,scale=0.6]\tikzstyle{every node}=[font=\footnotesize]
\draw[domain=-1.5:4] plot (0, \x) node[above]{$L_4$};
\draw[domain=-1.5:4] plot (1, \x) node[above]{$L_5$};
\draw[domain=-1.5:4] plot ({1+sqrt(2)/2}, \x) node[above]{$L_6$};

\draw[domain=4:-1] plot (\x, 0) node[left]{$L_3$};
\draw[domain=4:-1] plot (\x, 1) node[left]{$L_2$};
\draw[domain=4:-1] plot (\x, {2+sqrt(2)}) node[left]{$L_1$};

\draw[domain={4:-1}] plot (\x, \x) node[left]{$L_{9}$};
\draw[domain={-0.1:3.85}] plot (\x, {sqrt(2)*\x-sqrt(2)}) node[above]{$L_7$};
\draw[domain={2:-1}] plot (\x, {sqrt(2)*\x+1}) node[left]{$L_8$};
\end{tikzpicture}
}
\hspace{2cm}
\subfigure{}{
\begin{tikzpicture}[domain=-2:4,scale=1]\tikzstyle{every node}=[font=\footnotesize]
\draw[domain=-1:2] plot (0, \x) node[above]{$L_4$};
\draw[domain=-1:2] plot (1, \x) node[above]{$L_5$};
\draw[domain=-1:2] plot ({1-sqrt(2)/2}, \x) node[above]{$L_6$};

\draw[domain=2:-1] plot (\x, 0) node[left]{$L_3$};
\draw[domain=2:-1] plot (\x, 1) node[left]{$L_2$};
\draw[domain=2:-1] plot (\x, {2-sqrt(2)}) node[left]{$L_1$};

\draw[domain={2:-1}] plot (\x, \x) node[left]{$L_{9}$};
\draw[domain={1.5:-0.4}] plot (\x, {-sqrt(2)*\x+sqrt(2)}) node[above]{$L_7$};
\draw[domain={1.3:-0.5}] plot (\x, {-sqrt(2)*\x+1}) node[left]{$L_8$};
\end{tikzpicture}
}
\caption{\label{n4=2-real-excptional-3}}
\end{figure}
 \end{enumerate}

\end{proof}

\section{Arrangements of 10 lines with a single quadruple point}

Let $\mathcal{A}$ be a non-reductive arrangement of 10 lines with a single quadruple point.  By Lemma~ \ref{lemma:intersection-formula} and Theorem \ref{thm:Hirzebruch}, we know that there are at most 12 triple points.

We say  that 2 multiple points of $\mathcal{A}$ are disjoint if they are not on the same line of $\mathcal{A}$. We say that 2 multiple points of $\mathcal{A}$ are adjoint if they are on the same line of $\mathcal{A}$.  Since $\mathcal{A}$ contains only 10 lines, it is not hard to see that there are at most 2 disjoint triple points apart from the quadruple point.

\subsection{Two disjoint triple points apart from the pencil of the quadruple point}

\begin{prop}Let $\mathcal{A}=\{L_1, L_2, \dots, L_{10}\}$ be a non-reductive line arrangement in $\mathbb{CP}^2$ with a unique quadruple point, say  $L_1\cap L_2\cap L_3\cap L_7$.
 Assume that there are $2$ disjoint triple points which are apart from $L_1\cup L_2\cup L_3\cup L_7$. Then either $\mathcal{M}^c_{\mathcal{A}}$ is irreducible, or $\mathcal{A}$ is isomorphic to the arrangements defined by equations \eqref{eq:n_4=1-1}.
\end{prop}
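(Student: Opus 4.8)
The plan is to follow the two–stage strategy of the previous theorems: first reduce the combinatorics to a rigid tripartite pattern, then introduce coordinates adapted to it and read the moduli space off an explicit system of equations. Write $O=L_1\cap L_2\cap L_3\cap L_7$ for the quadruple point and let $P,Q$ be the two disjoint triple points lying off $L_1\cup L_2\cup L_3\cup L_7$. Since neither $P$ nor $Q$ meets a line through $O$, and they share no line, their six lines exhaust $\{L_4,L_5,L_6,L_8,L_9,L_{10}\}$; relabel so that $P=L_4\cap L_5\cap L_6$ and $Q=L_8\cap L_9\cap L_{10}$. Thus the ten lines fall into three pencils $A=\{L_1,L_2,L_3,L_7\}$, $B=\{L_4,L_5,L_6\}$, $C=\{L_8,L_9,L_{10}\}$ with centres $O,P,Q$. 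Two lines of one pencil meet only at its centre, so any further multiple point is a triple point meeting each pencil in exactly one line, i.e.\ it corresponds to a triple $(a,b,c)\in A\times B\times C$. As two lines meet in a single point and $n_4=1$, the map sending each such triple to its $(b,c)$–pair is injective, so there are at most $|B||C|=9$ of them; non-reductivity forces each line of $A$ to lie on at least two, hence at least $8$. Using the counting identity of Lemma~\ref{lemma:intersection-formula} (with Theorem~\ref{thm:Hirzebruch} for consistency) I would conclude $n_3\in\{10,11\}$, and that the extra triple points form a set $S\subseteq B\times C$, $|S|\in\{8,9\}$, labelled by $A$, whose label-classes are partial permutation matrices of size $2$ or $3$ and whose rows and columns are each covered at least twice. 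Enumerating such labellings up to permutations within the pencils and the symmetry $B\leftrightarrow C$ yields a short list of candidate lattices.

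For the coordinate stage I would place the three centres at the vertices of the coordinate triangle: $O$ at the origin, $P$ and $Q$ at the two points at infinity, so that $A$ consists of lines $y=m_a x$, $B$ of verticals $x=p_b z$, and $C$ of horizontals $y=q_c z$. A triple $(a,b,c)$ is then concurrent precisely when $q_c=m_a p_b$, and the residual $PGL(3,\mathbb{C})$ is the two–dimensional torus $x\mapsto\lambda x,\ y\mapsto\mu y$, under which these relations are invariant. Hence for each candidate $S$ the moduli space is cut out in the parameter torus by the multiplicative equations $\{\,q_c=m_a p_b : (a,b,c)\in S\,\}$ together with the open non-degeneracy conditions (distinct slopes, distinct $p$'s and $q$'s, all nonzero). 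Eliminating variables along the independent cycles of the incidence graph of $S$ reduces each case to a single polynomial in one parameter, and the number and type of its roots decide reducibility. I expect that for every labelling but one the system forces the free parameter to satisfy an equation whose roots form a single complex-conjugate pair (or a unique point), so that $\mathcal{M}^c_{\mathcal{A}}$ is irreducible, exactly as in the $\pm\sqrt{-1}$ cases of Section~4; the lone exception should be the pattern whose elimination polynomial has two distinct real (or otherwise non-conjugate) roots, giving two components that survive the complex-conjugation quotient. That arrangement is the one recorded in \eqref{eq:n_4=1-1}.

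The main obstacle is precisely this last point. The hard part is carrying out the enumeration up to the pencil symmetries without missing or double-counting a labelling, and then, for each surviving type, solving the multiplicative cycle relations carefully enough to distinguish a genuinely reducible $\mathcal{M}^c_{\mathcal{A}}$ (two non-conjugate solutions) from one that becomes irreducible after the quotient (a conjugate pair). Pinning down exactly which incidence pattern produces the non-conjugate roots, and verifying that the corresponding configuration is actually realizable with all multiplicities as prescribed and no accidental coincidences, is the delicate step that isolates \eqref{eq:n_4=1-1}.
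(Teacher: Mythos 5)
Your combinatorial setup is sound, and it is in fact the same skeleton the paper's own proof uses: since the quadruple point and the two disjoint triple points pairwise share no line of $\mathcal{A}$, every further multiple point is a triple point meeting each of the three pencils $A$, $B$, $C$ in exactly one line; injectivity of the map to $(b,c)$-pairs follows from $n_4=1$; and non-reductivity gives $8\le |S|\le 9$ together with your covering conditions on rows, columns and labels. But what you have written is a plan, not a proof. The entire content of the proposition lies in the steps you defer: enumerating the admissible labellings up to symmetry, deciding which are realizable, and solving the resulting incidence equations to see which lattice yields a reducible $\mathcal{M}^c_{\mathcal{A}}$. You assert (``I expect'', ``should be'') that exactly one pattern is exceptional and that it is the one of \eqref{eq:n_4=1-1}; that assertion is precisely what has to be proved. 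The paper carries this out concretely: two lines of $C$, say $L_8$ and $L_9$, must each carry three of the extra triple points; up to lattice isomorphism the incidences of $L_9$ reduce to two cases; and in each case the constraints on $L_{10}$ produce explicit polynomials --- a free parameter $t$ (or $t^2=-1$) in the first case, and in the second case either $2t^2-2t+1=0$ (a conjugate pair, so $\mathcal{M}^c_{\mathcal{A}}$ is irreducible) or $t^4-2t^3+4t^2-3t+1=0$, which is the exceptional lattice \eqref{eq:n_4=1-1}. Note also that your guess about the mechanism is off: that quartic has no real roots (its derivative is $(2t-1)(2t^2-2t+3)$ and its value at $t=1/2$ is $5/16>0$), so the exceptional moduli space is four points forming \emph{two} complex-conjugate pairs, whence $\mathcal{M}^c_{\mathcal{A}}$ has two points; it is not a case of ``two distinct real roots.''

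There is a second, more technical gap in your coordinate stage: placing $O$, $P$, $Q$ at the vertices of the coordinate triangle is possible only when the three pencil centres are non-collinear, and nothing in the intersection lattice forces this. For example, in the paper's second case the nine-line sub-arrangement at the parameter value $t=1/2$ has $L_8\cap L_9$ lying on the line at infinity spanned by the other two centres, so configurations with collinear centres do occur along the way, and your toric parametrization simply cannot represent them. The paper's normalization (two centres at infinity, the third wherever the equations place it) makes no such assumption, so its analysis is automatically complete; to make your version rigorous you would have to either prove that non-collinearity is forced for each candidate ten-line lattice, or analyze the collinear stratum separately.
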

\begin{proof}Let $L_4\cap L_5\cap L_6$ and $L_8\cap L_9\cap L_{10}$ be the 2 triple points.
We claim that there are at least 2 of $L_8$, $L_9$ and $L_{10}$ that each passes through 3 intersection
 points of $(L_1\cup L_2\cup L_3\cup L_7) \cap (L_4\cup L_5\cup L_6)$.
In fact, there should be at least 8 triple points in $L_1\cup L_2\cup L_3\cup L_7$ so that each of the 4 lines will pass through at least 3 multiple points.  Since $L_8\cap L_9\cap L_{10}$ is a triple point, then the 8 points must be in $(L_1\cup L_2\cup L_3\cup L_7) \cap (L_4\cup L_5\cup L_6)$. Notice that each of $L_8$, $L_9$ and $L_{10}$ can pass through at most 3 of those intersection points.
If only one of $L_8$, $L_9$ and $L_{10}$ passes through 3 intersection points of $(L_1\cup L_2\cup L_3\cup L_7) \cap (L_4\cup L_5\cup L_6)$, then there will be at most 7 triple points in $L_1\cup L_2\cup L_3\cup L_7$.
Let $L_8$ and $L_9$ be the two lines  each of which passes through 3 intersection points of $(L_1\cup L_2\cup L_3\cup L_7) \cap (L_4\cup L_5\cup L_6)$.

We may assume that $L_4\cap L_7$, $L_5\cap L_3$ and $L_6\cap L_2$ are on $L_8$. By our assumption, the line $L_9$ should
 pass through one of $L_1\cap L_4$, $L_1\cap L_5$ and $L_1\cap L_6$ so that $L_1$ will pass through 3 multiple points.
 By switching the labels of $L_4$, $L_5$ and $L_6$, and others accordingly,
 we may assume that $L_1\cap L_4$ is on $L_9$.

By our assumption, we see that $L_5\cap L_9$ and $L_6\cap L_9$ should also be triple points. Then either  $L_2\cap L_5$ or $L_5 \cap L_7$ should be on  $L_9$. Correspondingly, $\{L_2\cap L_5, L_3\cap L_6\}\subset L_9$, $\{L_2\cap L_5, L_6 \cap L_7\} \subset
L_9$ or $\{L_5 \cap L_7, L_3\cap L_6\} \subset L_9$.  Notice that the last two cases are equivalent by a permutation $(5, 6)(2, 3)$.

Assume that $\{L_1\cap L_4, L_2\cap L_5, L_3\cap L_6\}$ is in $L_9$  (see Figure \ref{n4=1-1})  \begin{figure}[htbp]\centering
\begin{tikzpicture}[domain=-1:6, scale=0.6]\tikzstyle{every node}=[font=\footnotesize]
\draw[domain=-1:5] plot (0, \x) node[above]{$L_4$};
\draw[domain=-1:5] plot (1, \x) node[above]{$L_5$};
\draw[domain=-1:5] plot (2, \x) node[above]{$L_6$};

\draw[domain=4:-2] plot (\x, 3) node[left]{$L_1$};
\draw[domain=4:-2] plot (\x, 2) node[left]{$L_2$};
\draw[domain=4:-2] plot (\x, 1) node[left]{$L_3$};
\draw[domain=4:-2] plot (\x, 0) node[left]{$L_7$};

\draw[domain=-1:4] plot (\x, \x) node[above]{$L_8$};
\draw[domain=4:-1] plot (\x, -\x+3) node[above]{$L_9$};

\end{tikzpicture}
\caption{\label{n4=1-1}}
\end{figure}.
By automorphisms of the dual of the projective plane, we can write
$L_7=\{y=0\}$, $L_3=\{y=z\}$, $L_2=\{y=tz\}$, $L_4=\{x=0\}$, $L_5=\{x=z\}$, $L_6=\{x=tz\}$, $L_{8}=\{x=y\}$, $L_{9}=\{y=-x+(t+1)z)\}$, and $L_1=\{y=(t+1)z\}$, where $t$ is a complex number, and $t\neq 0, 1$.
Since $L_1$ passes through 3 multiple points by our assumption, then $L_{10}$ should pass through either $L_1\cap L_6$ or $L_1\cap L_5$. On the other hand, $L_8\cap L_9$ should be on $L_{10}$ by our assumption.  If $L_1\cap L_6$ is on $L_{10}$, then  $L_5 \cap L_7$ also should be on $L_{10}$ so that $L_7$ will pass through 3 multiple points.
If $L_1\cap L_5$ is on $L_{10}$, then  $L_6 \cap L_7$ should also be on $L_{10}$ so that $L_7$ will have 3 multiple points. Notice by the permutation $(1, 7)(2, 3)(8, 9)$, the 2 arrangements are lattice isomorphic. Let $L_1\cap L_5$ and $L_6\cap L_7$ be on $L_{10}$, then $L_{10}$ is defined by $y=\frac{t+1}{1-t}x+\frac{t^2+t}{t-1}z$.  We check that for any $t\neq 0, 1$, the intersection $L_8\cap L_9\cap L_{10}$ is always non-empty. Thus the arrangement is defined by the following equation,
\[\scriptstyle xy(x-z)(y-z)(x-tz)(y-tz)(y-(t+1)z)(x-y)(y+x-(t+1)z)((t-1)y+(t+1)(x-tz))=0,\]
where $t$ is a complex number and $t\neq 0, 1$. It is clear that the moduli space $\mathcal{M}_{\mathcal{A}}$ is irreducible. However, one can check that it is possible that $L_{10}$ will pass through 1 more triple point $L_3\cap L_4$. Under this additional condition, we get $t^2=-1$. Hence the quotient moduli space $\mathcal{M}^c_{\mathcal{A}}$ is irreducible.

Now assume that $\{L_1\cap L_4, L_2\cap L_5, L_6\cap L_7\}$ is in $L_9$  (see Figure \ref{n4=1-2}).  \begin{figure}[htbp]\centering
\begin{tikzpicture}[domain=-1:6, scale=0.6]\tikzstyle{every node}=[font=\footnotesize]
\draw[domain=-1:5] plot (0, \x) node[above]{$L_4$};
\draw[domain=-1:5] plot (1, \x) node[above]{$L_5$};
\draw[domain=-1:5] plot (2, \x) node[above]{$L_6$};

\draw[domain=5:-1] plot (\x, 4) node[left]{$L_1$};
\draw[domain=5:-1] plot (\x, 2) node[left]{$L_2$};
\draw[domain=5:-1] plot (\x, 1) node[left]{$L_3$};
\draw[domain=5:-1] plot (\x, 0) node[left]{$L_7$};

\draw[domain=-1:5] plot (\x, \x) node[above]{$L_8$};
\draw[domain=2.5:-0.5] plot (\x, -2*\x+4) node[left]{$L_9$};

\end{tikzpicture}
\caption{\label{n4=1-2}}
\end{figure}
By automorphisms of the dual plane of the projective plane, we can write
$L_7=\{y=0\}$, $L_3=\{y=z\}$, $L_2=\{y=tz\}$, $L_4=\{x=0\}$, $L_5=\{x=z\}$, $L_6=\{x=tz\}$, $L_{8}=\{x=y\}$, $L_{9}=\{y=\frac{t}{1-t}(x-tz)\}$, and $L_1=\{y=\frac{t^2}{t-1}z\}$, where $t$ is a complex number, and $t\neq 0, 1$.

Then either $L_1\cap L_5$ or $L_1\cap L_6$ is on
$L_{10}$ so that $L_1$ will pass through 3 multiple points. Similarly, either $L_3\cap L_4$ or $L_3\cap L_6$ is on $L_{10}$.
Thus $L_{10}$ should pass through one of the
following 3 sets of points $\{L_1\cap L_5, L_3\cap L_4\}$, $\{L_1\cap L_5, L_3\cap L_6\}$ or $\{L_1\cap L_6, L_3\cap L_4\}$. By the permutation
\[(1,7,2,3)(4, 9, 6, 8,5, 10)\]
 the case that  $\{L_1\cap L_6, L_3\cap L_4\}\subset L_{10}$ can be identified with the case that $\{L_1\cap L_5, L_3\cap L_6\}\subset L_{10}$.
 Now we compute the defining equation of $L_{10}$.

  If $\{L_1\cap L_5, L_3\cap L_4\}$ is in $L_{10}$,
then $L_{10}=\{y=\frac{t^2-t+1}{t-1}x+z\}$. Since $L_8\cap L_9\cap L_{10}$ is non-empty, then $t$ must be a root of the following polynomial $t^4-2t^3+4t^2-3t+1$.
Hence, $\mathcal{A}$ is defined by the following equation
\begin{equation}\label{eq:n_4=1-1}
\scriptstyle xy(x-z)(y-z)(x-tz)(y-tz)((t-1)y-t^2z)(x-y)(tx+(t-1)y-t^2z)((t^2-t+1)x-(t-1)y+(t-1)z)=0\end{equation}
where $t$ is a solution of the following equation $t^4-2t^3+4t^2-3t+1=0$.

If $\{L_1\cap L_5, L_3\cap L_6\}$ is in $L_{10}$,
then $L_{10}=\{y=-\frac{t^2-t+1}{(t-1)^2}(x-tz)+z\}$. Since $L_8\cap L_9\cap L_{10}$ is non-empty, then $t$ must be a root of the following polynomial $2t^3-4t^2+3t-1=(t-1)(2t^2-2t+1)$. However, by our assumption $t\neq 1$.
Hence, $\mathcal{A}$ is defined by the following equation
\begin{equation*}\label{eq:n_4=1-2} \scriptstyle xy(x-z)(y-z)(x-tz)(y-tz)((t-1)y-t^2z)(x-y)(tx+(t-1)y-t^2z)((t^2-t+1)x+(t-1)y-(t^3-t+1)z)=0\end{equation*}
where $t$ is a solution of the following equation $2t^2-2t+1=0$. Therefore, the quotient moduli space $\mathcal{M}^c(\mathcal{A})$ is irreducible.
\end{proof}

\subsection{All triple points apart from the pencil of the quadruple point have an adjoint point}

\begin{thm}Let $\mathcal{A}=\{L_1, L_2,\dots, L_{10}\}$ be a non-reductive line arrangement in $\mathbb{CP}^2$ with a quadruple point $L_1\cap L_2\cap L_3\cap L_4$. Assume that $L_5\cap L_6\cap L_7$ is a triple point apart from $L_1\cup L_2\cup L_3\cup L_4$ and all triple points apart from $L_1\cup L_2\cup L_3\cup L_4$ are on $L_5\cup L_6\cup L_7$.  Then one of the following holds:
\begin{enumerate}
\item the moduli space $\mathcal{M}_{\mathcal{A}}$ is irreducible;
\item the quotient moduli space $\mathcal{M}^c_{\mathcal{A}}$ is irreducible;
\item the arrangement $\mathcal{A}$ is defined by one of the equations \eqref{eq:n_4=1-3}, \eqref{5.2-9-triples-1}, \eqref{eq:5.2-9-triples-2} and \eqref{eq:5.2-9-triples-3}.
\end{enumerate}

\end{thm}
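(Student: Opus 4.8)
The plan is to follow the same casework strategy used throughout the paper: fix the combinatorial skeleton imposed by the hypotheses, enumerate the finitely many lattice-isomorphism classes compatible with it, and for each class write down defining equations in a normalized coordinate frame to decide whether the resulting parameter space is irreducible (possibly only after quotienting by complex conjugation). Concretely, the quadruple point is $L_1\cap L_2\cap L_3\cap L_4$ and the distinguished triple point is $L_5\cap L_6\cap L_7$, with every triple point away from the pencil $L_1\cup L_2\cup L_3\cup L_4$ forced to lie on $L_5\cup L_6\cup L_7$. The three remaining lines $L_8,L_9,L_{10}$ must each pass through at least $3$ multiple points (non-reductivity), so each of them must carry triple points drawn from the intersection grid $(L_1\cup L_2\cup L_3\cup L_4)\cap(L_5\cup L_6\cup L_7)$ together with possible intersections against $L_8,L_9,L_{10}$ themselves.

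First I would record the numerical constraints. By Lemma~\ref{lemma:intersection-formula} and Theorem~\ref{thm:Hirzebruch}, since $n_4=1$ and $n_r=0$ for $r\ge 5$, there are at most $12$ triple points; and non-reductivity forces at least $8$ triple points on the pencil $L_1\cup L_2\cup L_3\cup L_4$ (three per line, with the quadruple point shared). This pins down how the grid $(L_1\cup L_2\cup L_3\cup L_4)\cap(L_5\cup L_6\cup L_7)$ must be populated and how many triple points can remain for $L_8,L_9,L_{10}$. I would then split into cases according to how many of $L_8,L_9,L_{10}$ pass through three points of the grid and, secondarily, according to the incidence pattern of $L_5,L_6,L_7$ with the pencil lines, exactly as in the two-disjoint-triple-points proposition and the $n_4=2$ theorem. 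In each surviving combinatorial type I would normalize coordinates using the $PGL(3,\mathbb{C})$ action, putting $L_1,L_2,L_3$ and $L_5,L_6,L_7$ into a standard grid with $L_7=\{y=0\}$, $L_4$ a standard extra pencil line, and a single free parameter $t$ (occasionally a second parameter $c$), then impose the collinearity conditions defining the incidences on $L_8,L_9,L_{10}$.

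The heart of the argument is then purely computational per case: each incidence condition is a polynomial equation in the parameters, and the intersection lattice is realizable exactly on the common zero locus. When the constraints cut out an irreducible variety (typically a single rational condition, leaving a $\mathbb{P}^1$ or a point), conclusion (1) or (2) holds; when they force $t$ to satisfy an irreducible polynomial with complex conjugate roots, $\mathcal{M}^c_{\mathcal{A}}$ is irreducible, giving (2); and the genuinely reducible exceptional lattices—those for which even $\mathcal{M}^c_{\mathcal{A}}$ stays reducible—are collected into conclusion (3) with their explicit equations \eqref{eq:n_4=1-3}, \eqref{5.2-9-triples-1}, \eqref{eq:5.2-9-triples-2}, \eqref{eq:5.2-9-triples-3}. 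Throughout I would use permutation symmetries of the labels (as in the $(2,3)(6,7)$ and $(1,7)(2,3)(8,9)$ reductions already employed) to collapse equivalent sub-cases and keep the enumeration finite and manageable.

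The main obstacle I expect is not any single hard idea but the combinatorial bookkeeping: with only a single quadruple point and the triple points distributed between the pencil and the fixed line $L_5\cup L_6\cup L_7$, there are substantially more incidence patterns for $L_8,L_9,L_{10}$ than in the $n_4=2$ or $n_4=3$ cases, so the enumeration branches widely and one must be careful to (i) discard patterns that force a hidden extra quadruple point or a second triple point outside $L_5\cup L_6\cup L_7$ (contradicting the hypothesis), and (ii) recognize when two apparently different patterns coincide under a label permutation. Organizing the casework so that the symmetry reductions are applied \emph{before} the coordinate computation, rather than after, is what keeps the proof tractable; the algebra in each individual case is routine elimination in one or two variables.
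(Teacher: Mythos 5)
Your strategy coincides with the paper's: enumerate the lattice-isomorphism types compatible with the hypotheses, collapse equivalent ones by label permutations, normalize coordinates by $PGL(3,\mathbb{C})$, and decide irreducibility of each resulting parameter space (possibly only after the conjugation quotient). So there is no wrong idea here; the issue is that two things the paper actually relies on are absent, and they are what make the enumeration complete and finite in a usable way.

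First, your organizing bound is too weak. You stop at $n_3\le 12$ from Lemma \ref{lemma:intersection-formula} and Theorem \ref{thm:Hirzebruch}, whereas the paper's proof begins by observing that, under the hypotheses, every triple point other than $L_5\cap L_6\cap L_7$ must lie in $(L_8\cup L_9\cup L_{10})\cap(L_1\cup L_2\cup\cdots\cup L_7)$: a triple point $L_8\cap L_9\cap L_{10}$ would contradict the hypothesis that triple points off the pencil lie on $L_5\cup L_6\cup L_7$, and a triple point using two of $L_5, L_6, L_7$ would force that line through $L_5\cap L_6\cap L_7$ and promote it to a second quadruple point. B\'ezout then gives intersection number $21$, each triple point absorbing multiplicity $2$, hence at most $10$ such points and $n_3\le 11$. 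This is what produces the paper's clean top-level trichotomy $n_3\in\{9,10,11\}$; without it your enumeration "by how many of $L_8,L_9,L_{10}$ pass through three grid points" admits phantom configurations (e.g.\ $n_3=12$) and has no a priori guarantee of covering all cases in an organized way. Second, the content of this theorem \emph{is} the executed casework: the paper dispatches the $n_3=11$ case not by computation but by recognizing a MacLane or $\mathcal{A}^{\pm\sqrt{-1}}$ sub-arrangement and invoking the classification of arrangements of $9$ lines in \cite{Ye2011}, and the four exceptional arrangements of conclusion (3) emerge only at the end of the $n_3=10$ analysis, while the $n_3=9$ case turns out to give only irreducible moduli. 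Since none of this is carried out, your proposal cannot certify that the exceptional list is exactly \eqref{eq:n_4=1-3}, \eqref{5.2-9-triples-1}, \eqref{eq:5.2-9-triples-2}, \eqref{eq:5.2-9-triples-3}; as written it is a correct plan for the paper's proof, not a proof.
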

 \begin{proof}
 It is clear that there should be at least 9 triple points so that each line passes through 3 multiple points. Note that all triple points will be in $(L_8\cup L_9\cup L_{10})\cap (L_1\cup L_2\cup \cdots\cup L_7)$, except $L_5\cap L_6\cap L_7$.  By B\'ezout's Theorem, the intersection number of $(L_1\cup L_2\cup \cdots\cup L_7)$ and $(L_8\cup L_9\cup L_{10})$, is 21. Since the intersection multiplicity of a triple point is $2$, then there will be at most 10 triple points in $L_8\cup L_9\cup L_{10}\cap (L_1\cup L_2\cup \cdots\cup L_7)$. Plus the triple point $L_5\cap L_6\cap L_7$, we will have at most 11 triple points.

\paragraph{Case 1:} Assume that there are 11 triple points. We claim that except for the triple point $L_5\cap L_6\cap L_7$, all triple points are in $L_1\cup L_{2}\cup L_{3} \cup L_4$. Assume in contrary that $L_8\cap L_9$ is a triple point in $L_7$. Then each of $L_8$ and $L_9$ will pass through at most 2 more triple points apart from $L_{10}$. On $L_{10}$, we can have at most 4 triple points. Totally, there will be  at most 10 triple points.  Since all the triple points except
$L_5\cap L_6\cap L_7$ are in $L_1\cup L_2 \cup L_{3} \cup L_4$, then there are two of $L_1$, $L_2$, $L_3$ and $L_4$ such that each passes through 3 triple points which must be in $(L_1\cup L_{2}\cup L_{3} \cup L_4)\cap (L_5\cup L_6\cup L_7)$. Therefore, by lattice isomorphism, we may assume that the arrangement contains the sub-arrangement in Figure \ref{n_4=1-n_3=11}.
\begin{figure}[htbp]\centering
\begin{tikzpicture}[domain=-2:4, scale=0.6]\tikzstyle{every node}=[font=\footnotesize]
\draw[domain=-3:2] plot (0, \x) node[above]{$L_5$};
\draw[domain=-3:2] plot (1, \x) node[above]{$L_6$};
\draw[domain=-3:2] plot (2, \x) node[above]{$L_7$};

\draw[domain=3:-1] plot (\x, 0) node[left]{$L_3$};
%\draw[domain=3:-1] plot (\x, 1.3) node[left]{$L_2$};
%\draw[domain=3:-1] plot (\x, 2) node[left]{$L_1$};
\draw[domain=3:-1] plot (\x, -1) node[left]{$L_4$};

\draw[domain=2.5:-1] plot (\x, {\x-1}) node[left]{$L_8$};
\draw[domain=2.5:-1] plot (\x, {\x-2}) node[left]{$L_9$};
\draw[domain=3:-1] plot (\x, {-0.5*\x}) node[above]{$L_{10}$};
\end{tikzpicture}
\caption{\label{n_4=1-n_3=11}}
\end{figure}
Each of $L_1$ and $L_2$ should pass through 2 triple points. Since there are at most 3 double points in $L_8\cup L_9\cup L_{10}$, then at least one of $L_8$, $L_9$ and $L_{10}$ should pass through a point in $(L_1\cup L_2)\cap (L_5\cup L_6\cup L_7)$. Assume that $L_8$ passes through $L_2\cap L_7$. If $L_9$ or $L_{10}$ passes through a double point of $L_2\cap (L_5\cup L_6\cup L_7)$, then the arrangement contains a MacLane arrangement as a sub-arrangement. It is not hard to verify that the quotient moduli space $\mathcal{M}^c_{\mathcal{A}}$ is irreducible.  Assume that the arrangement does not contain a MacLane arrangement. Then $L_2\cap L_9\cap L_{10}$ must be a triple point so that $L_2$ will pass through 2 triple points. Then the arrangement contains a sub-arrangement $\mathcal{A}'=\mathcal{A}\setminus\{L_1\}$. which is lattice isomorphic to $\mathcal{A}^{\pm\sqrt{-1}}$. Therefore, the quotient moduli space $\mathcal{M}^c_{\mathcal{A}}$ is irreducible.

\paragraph{Case 2:} Assume that there are 10 triple points.
There are two main subcases:
\paragraph{Subcase 1:}
Assume that there is another triple point, say $L_7\cap L_8\cap L_9$ apart from $L_1\cup L_2\cup L_3\cup L_4$. Notice that each of $L_8$ and $L_9$ passes through at most 2 more triple points apart from $L_{10}$ which are on $L_5\cup L_6$. On $L_{10}$ there can be at most 4 triple points which are on $L_1\cup L_2\cup L_3\cup L_4$ (see for instance Figure \ref{n_4=1-3}).
Then each of $L_8$ and $L_9$ should pass through 2 points in $(L_1\cup L_{2} \cup L_{3}\cup L_4)\cap (L_5\cup L_6)$ and $L_{10}$ should pass through 4 triple points so that there will be 10 triple points.  Since each of $L_1$, $L_2$, $L_3$ and $L_4$ should pass through 2 triple points (so that each will pass through 3 multiple points), then each of the four lines $L_1$, $L_2$, $L_3$ and $L_4$ should pass through exactly 1 of the 4 triple points in $(L_1\cup L_2\cup L_3\cup L_4)\cap (L_5\cup L_6)\cap (L_8\cup L_9)$. By lattice isomorphism, we may assume that the arrangement contains the sub-arrangement in Figure \ref{n4=1-3}.
\begin{figure}[htbp]\centering\begin{tikzpicture}[domain=-1:6, scale=0.6]\tikzstyle{every node}=[font=\footnotesize]
\draw[domain=-1:5] plot (0, \x) node[above]{$L_5$};
\draw[domain=-1:5] plot (1, \x) node[above]{$L_6$};
\draw[domain=-1:5] plot (3, \x) node[above]{$L_7$};

\draw[domain=4:-2] plot (\x, 1.5) node[left]{$L_3$};
\draw[domain=4:-2] plot (\x, 2.4) node[left]{$L_2$};
\draw[domain=4:-2] plot (\x, 3.1) node[left]{$L_1$};
\draw[domain=4:-2] plot (\x, 0) node[left]{$L_4$};

\draw[domain=3.57:-2] plot (\x, {0.7*\x+2.4}) node[left]{$L_8$};
\draw[domain=3.33:-0.66] plot (\x, {1.5*\x}) node[left]{$L_9$};

\end{tikzpicture}
\caption{\label{n4=1-3}}
\end{figure}
We may assume that $L_1\cap L_7\cap L_{10}$ is a triple point so that $L_7$ will pass through 3 multiple points.
The defining equations of $L_1$, $L_2$, $\dots$, $L_9$ can be written as
$L_1=\{y=z\}$, $L_2=\{y=0\}$, $L_3=\{y=t_1z\}$, $L_4=\{y=t_2z\}$, $L_5=\{x=0\}$, $L_6=\{x=z\}$, $L_7=\{x=t_3z\}$, $L_8=\{y=x\}$ and $L_9=\{y=(t_1-t_2)x+t_2z\}$, where $t_1, t_2, t_3\in\mathbb{C}\setminus\{0, 1\}$, $t_1\neq t_2$, and satisfy the equation \[t_3(t_1-t_2-1)+t_2=0.\]

Recall that $L_{10}$ should pass through 4 triple points so that there will be 10 triple points. Since each line should pass through 3 multiple points, the possibilities are as follows.

\begin{enumerate}
\item $\{L_2\cap L_6, L_3\cap L_5, L_4\cap L_8\}\subset L_{10}$ (see Figure \ref{n_4=1-3}).

\begin{figure}[htbp]
\begin{minipage}{0.5\textwidth}
\centering\begin{tikzpicture}[domain=-1:6,scale=0.6]\tikzstyle{every node}=[font=\footnotesize]
\draw[domain=-4:3] plot (0, \x) node[above]{$L_5$};
\draw[domain=-4:3] plot (1, \x) node[above]{$L_6$};
\draw[domain=-4:3] plot ({(-0.75487766624669284
-1)/-0.75487766624669284
}, \x) node[above]{$L_7$};
\draw[domain=3:-4] plot (\x, -0.75487766624669284
) node[left]{$L_3$};
\draw[domain=3:-4] plot (\x, 1) node[left]{$L_1$};
\draw[domain=3:-4] plot (\x, 0) node[left]{$L_2$};
\draw[domain=3:-4] plot (\x, {-0.75487766624669284
/(-0.75487766624669284
+1)}) node[left]{$L_4$};
\draw[domain=-3.8:3] plot (\x, \x) node[right]{$L_8$};
\draw[domain=2.6:-0.35] plot (\x, {(-0.75487766624669284
+0.75487766624669284
/(-0.75487766624669284
+1))*\x-0.75487766624669284
/(-0.75487766624669284
+1)}) node[left]{$L_9$};
\draw[domain=-4:3] plot (\x, {0.75487766624669284
*\x-0.75487766624669284}) node[right]{$L_{10}$};
\end{tikzpicture}
\caption{\label{n_4=1-3}}
\end{minipage}
\begin{minipage}{0.5\textwidth}
\centering
\begin{tikzpicture}[domain=-1:6,scale=0.5]\tikzstyle{every node}=[font=\footnotesize]
\draw[domain=-4:4] plot (0, \x) node[above]{$L_5$};
\draw[domain=-4:4] plot (1, \x) node[above]{$L_6$};
\draw[domain=-4:4] plot (3, \x) node[above]{$L_7$};

\draw[domain=4:-4] plot (\x, -1) node[left]{$L_3$};
\draw[domain=4:-4] plot (\x, 1) node[left]{$L_1$};
\draw[domain=4:-4] plot (\x, 0) node[left]{$L_2$};
\draw[domain=4:-4] plot (\x, -3) node[left]{$L_4$};

\draw[domain=-3.8:4] plot (\x, \x) node[right]{$L_8$};
\draw[domain=3.5:-0.4] plot (\x, {2*\x-3}) node[left]{$L_9$};
\draw[domain=-4:4] plot (\x, {(2/3)*(\x-3/2)}) node[right]{$L_{10}$};
\end{tikzpicture}
\caption{\label{n_4=1-6}}
\end{minipage}
\end{figure}

Then $L_{10}$ is defined by $y=-t_1x+t_1z$ and $t_1, t_2, t_3$ satisfy the following two equations:
\[1=t_1-t_1t_3
, \textup{~~and~~} t_2=t_1-t_1t_2.
\]
Simplify the system of those three equations, we obtain that $t_3=\frac{t_1-1}{t_1}$, $t_2=\frac{t_1}{t_1+1}$ and $t_1^3-t_1^2+1=0$. A defining equation can be written as:
\begin{equation}\label{eq:n_4=1-3}
\scriptstyle
xy(x-z)(y-z)(x-y)(x-\frac{t-1}{t}z)(y-t z)(y-\frac{t}{t+1}z)(y-\frac{t^2+t+1}{t+1}x-\frac{t}{t+1}z)(y-\frac{t^2-t+1}{t^2}(x+\frac{1}{t}z))=0
\end{equation}
where $t$ satisfies $t^3-t^2+1=0$. %A real root is -0.75487766624670338. A real representative is shown in Figure

\item $\{L_2\cap L_9, L_3\cap L_5, L_4\cap L_8\}\subset L_{10}$ (see Figure \ref{n_4=1-6}).
\iffalse Then $L_{10}$ is defined by $y=\frac{t_2-t_1}{t_3(t_2-t_1)-t_2}(x-\frac{t_2}{t_2-t_1}z)$ and $t_1, t_2, t_3$ satisfy the following two equations:
\[t_1=\frac{t_2-t_1}{t_3(t_2-t_1)-t_2}(-\frac{t_2}{t_2-t_1}), \text{~~and~~} t_2=\frac{t_2-t_1}{t_3(t_2-t_1)-t_2}(t_2-\frac{t_2}{t_2-t_1}).\]
Simplify the system of those three equations, \fi
One can verify that in this case $t_1=-1$, $t_2=-3$,  and $t_3=3$. Therefore, the moduli space $\mathcal{M}_{\mathcal{A}}$ is irreducible.

\item $\{L_2\cap L_9, L_3\cap L_8, L_4\cap L_6\}\subset L_{10}$.  By a permutation $(5,9)(6,8)(2, 3)$, we see that this arrangement is a lattice isomorphic to the one shown in Figure \ref{n_4=1-3}.

\item $\{L_2\cap L_9, L_3\cap L_5, L_4\cap L_6\}\subset L_{10}$. By a permutation $(5,9)(6,8)(2, 3)$, we see that this arrangement is lattice isomorphic to the one shown in Figure \ref{n_4=1-6}.

\end{enumerate}

\paragraph{Subcase 2:}
Assume that all triple points, except $L_5\cap L_6\cap L_7$, are on $L_1\cup L_2\cup L_3\cup L_4$. Since there are 9 triple points on $L_1\cup L_2\cup L_3\cup L_4$, then one of $L_1$, $L_2$, $L_3$ and $L_4$, say $L_1$, must pass through 3 triple points and each of the other 3 lines $L_2$, $L_3$ and $L_4$ should pass through exactly 2 triple points. Note that there are at most 3 triple points, $L_8\cap L_9$, $L_8\cap L_{10}$ and $L_9\cap L_{10}$, apart from $L_5\cup L_6\cup L_7$.  We will classify arrangements in this case according to the number of triple points apart from $L_5\cup L_6\cup L_7$.

\begin{enumerate}
\item There are 3 triple points apart from $L_5\cup L_6\cup L_7$.
 In this case, $L_8\cap L_9$, $L_8\cap L_{10}$ and $L_9\cap L_{10}$ are triple points in $(L_2\cup L_3\cup L_4)\setminus (L_5\cup L_6\cup L_7)$.  It is not hard to see that each of $L_8$, $L_9$ and $L_{10}$ passes through exactly 4 triple points, and each of $L_5$, $L_6$ and $L_7$ passes through exactly 3 triple points.  We may assume that $L_1\cap L_5\cap L_8$, $L_4\cap L_7\cap L_8$, $L_2\cap L_8\cap L_9$ and $L_3\cap L_{10}$ are the triple points on $L_8$. Moreover, we may assume that $L_1\cap L_6\cap L_9$, $L_1\cap L_7\cap L_{10}$ and $L_4\cap L_9\cap L_{10}$ are triple points.  Since each line should pass through at least 3 multiple points, then $L_3\cap L_5\cap L_9$ and $L_2\cap L_6\cap L_{10}$ should be triple points (see Figure \ref{n4=1-4}).
  \begin{figure}[htbp]
 \begin{minipage}{0.5\textwidth}
\centering\begin{tikzpicture}[domain=-1:6, scale=0.6]\tikzstyle{every node}=[font=\footnotesize]
\draw[domain=-1:6] plot (0, \x) node[above]{$L_5$};
\draw[domain=-1:6] plot (1, \x) node[above]{$L_6$};
\draw[domain=-1:6] plot (1.8019377358048223, \x) node[above]{$L_7$};

\draw[domain=3:-3] plot (\x, {(1.8019377358048223+1)/1.8019377358048223}) node[left]{$L_3$};
\draw[domain=3:-3] plot (\x, {1.8019377358048223^2+1.8019377358048223}) node[left]{$L_4$};
\draw[domain=3:-3] plot (\x, 0) node[left]{$L_1$};
\draw[domain=3:-3] plot (\x, 1) node[left]{$L_2$};

\draw[domain=2.1:-0.3] plot (\x, {(1.8019377358048223+1)*\x}) node[left]{$L_8$};
\draw[domain=1.5:-2.8] plot (\x, {-(1.8019377358048223+1)/1.8019377358048223*\x+(1.8019377358048223+1)/1.8019377358048223}) node[above]{$L_9$};
\draw[domain=2.3:-3] plot (\x, {1/(1-1.8019377358048223)*\x-1.8019377358048223/(1-1.8019377358048223)}) node[left]{$L_{10}$};
\end{tikzpicture}
\caption{\label{n4=1-4}}
\end{minipage}
\begin{minipage}{0.5\textwidth}
\centering\begin{tikzpicture}[domain=-1:6, scale=0.8]\tikzstyle{every node}=[font=\footnotesize]
\draw[domain=-1:4] plot (0, \x) node[above]{$L_5$};
\draw[domain=-1:4] plot (1, \x) node[above]{$L_6$};
\draw[domain=-1:4] plot (2, \x) node[above]{$L_7$};

\draw[domain=3:-1] plot (\x, 2) node[left]{$L_3$};
\draw[domain=3:-1] plot (\x, 3) node[left]{$L_4$};
\draw[domain=3:-1] plot (\x, 0) node[left]{$L_1$};
\draw[domain=3:-1] plot (\x, 1) node[left]{$L_2$};

\draw[domain=1.2:-0.5] plot (\x, \x) node[left]{$L_8$};
\draw[domain=0.8:2.3] plot (\x, -2*\x+4) node[right]{$L_9$};
\draw[domain=1.3:0.7] plot (\x, \x-1) node[below]{$L_{10}$};
\end{tikzpicture}
\caption{\label{n_4=1-8}}
\end{minipage}
\end{figure}
 By automorphism of the dual of the projective plane, we can write down the equations of the lines as follows:
$L_1=\{y=0\}$, $L_2=\{y=z\}$, $L_3=\{y=t_2z\}$, $L_4=\{y=t_3z\}$, $L_5=\{x=0\}$, $L_6=\{x=z\}$, $L_7=\{x=t_1z\}$, $L_8=\{y=\frac{t_3}{t_1}x\}$,  $L_9=\{y=-t_2x+t_2z\}$, and $L_{10}=\{y=\frac{1}{1-t_1}(x-t_1z)\}$ where $t_1$, $t_2$, $t_3$ are complex numbers and satisfy the following equations: $t_2=\frac{t_1+1}{t_1}$, $t_3=t_1^2+t_1$, and $(t_1+1)(t_1^3-t_1^2-2t_1+1)=0$. Notice that $t_1+1\neq 0$, otherwise $t_2=t_3=0$. So a defining equation can be written as
\begin{equation}\label{5.2-9-triples-1} \scriptstyle
xy(x-z)(y-z)(x-t z)(y-\frac{t+1}{t}z)(y-(t^2+t)z)(y-(t+1)x)(y+\frac{t+1}{t}x-\frac{t+1}{t}z)(y-\frac{1}{1-t}(x-t z))=0,
\end{equation}
where $t$ satisfies $t^3-t^2-2t+1$=0.

\item There are 2 triple points apart from $L_5\cup L_6\cup L_7$. Then there are 8 triple points on $L_5$, $L_6$ and $L_7$. So one of them should pass through 4 triple points including  $L_5\cap L_6\cap L_7$.  Let $L_6$ be the line passing though 4 triple points. On the other hand, one of $L_1$, $L_2$, $L_3$ and $L_4$ should pass through 3 triple points, since there are 9 triple points on $L_1\cup L_2\cup L_3\cup L_4$. By lattice isomorphism, we may assume that $L_1\cap L_5\cap L_8$, $L_1\cap L_6\cap L_{10}$, $L_1\cap L_9\cap L_7$, $L_2\cap L_6\cap L_8$ and $L_3\cap L_6\cap L_9$ are triple points (see Figure \ref{n_4=1-8}).

 By automorphism of the dual of the projective plane, we can write down the equations of lines $L_1$, $L_2$, $\dots$, $L_9$ as follows:
$L_1=\{y=0\}$, $L_2=\{y=z\}$, $L_3=\{y=t_2z\}$, $L_4=\{y=t_3z\}$, $L_5=\{x=0\}$, $L_6=\{x=z\}$, $L_7=\{x=t_1z\}$, $L_8=\{y=x\}$,  $L_9=\{y=\frac{t_2}{1-t_1}(x-t_1z)\}$ where $t_1, t_2, t_3\in\mathbb{C}\setminus\{0, 1\}$, $t_1\neq t_2$.

\begin{itemize}
\item $L_8\cap L_9$ is a triple point. Then it must be on $L_4$.  It follows that
\[t_1t_2-t_1t_3-t_2t_3+t_3=0.\]
By our assumption, either $L_8\cap L_{10}$ or $L_9\cap L_{10}$ should be a triple point. Up to a permutation $(8,9)(2,3)(5,7)$, we may assume that $L_9\cap L_{10}$ is the triple point. Then $L_{10}$ can be written as $L_{10}=\{y=\frac{t_2}{(1-t_1)(1-t_2)}(x-z)\}$. Note that $L_{10}$ should also pass through $L_4\cap L_5$ or $L_3\cap L_5$ so that $L_5$ will pass through 3 multiple points.

\begin{enumerate}
\item If $L_4\cap L_5$ is on $L_{10}$, then either $L_3\cap L_{10}$ or $L_3\cap L_8$ is on $L_7$ so that $L_7$ will pass through 3 multiple points.

 If $L_3\cap L_7\cap L_{10}$ is a triple point, then $t_1$, $t_2$ and $t_3$ satisfy 2 more equations:
$t_3=-\frac{t_2}{(1-t_1)(1-t_2)}$ and $t_2=\frac{t_2}{(1-t_1)(1-t_2)}(t_1-1)$. Simplifying the 3 equations, we have $t_2=2$, $t_1=\pm\sqrt{-1}$ and $t_3=1\pm\sqrt{-1}$ which implies that $\mathcal{M}^c_{\mathcal{A}}$ is irreducible.

If $L_3\cap L_7\cap L_8$ is a triple points, then $t_1$, $t_2$ and $t_3$ must also satisfy the following 2 equations:
$t_3=-\frac{t_2}{(t_1-1)(t_2-1)}$ and $t_1=t_2$. Simplifying the 3 equations, we have $t_1=t_2$, $t_3=-\frac{t_1}{(t_1-1)^2}$ and $t_1^3-2t_1^2+3t_1-1=0$. The defining equation can be written as
\begin{equation}\label{eq:5.2-9-triples-2}
\scriptstyle
xy(x-z)(y-z)(x-tz)(y-tz)(y+\frac{t}{(t-1)^2}z)(y-x)(y-\frac{t}{1-t}(x-tz))(y-\frac{t}{(1-t)^2}(x-z)),
\end{equation}
where $t$ satisfies $t^3-2t^2+3t-1=0$.
\item
 If $L_3\cap L_5$ is on $L_{10}$, then $L_4\cap L_7$ must be on $L_{10}$ so that there will be 10 triple points and each of $L_4$ and $L_7$ passes through at least 3 multiple points. Then we have two more equations:
$t_2=-\frac{t_2}{(1-t_1)(1-t_2)}$ and $t_3=\frac{t_2}{(1-t_1)(1-t_2)}(t_1-1)$. Simplifying those equations, we have $t_1=\frac{t_2-2}{t_2-1}$, $t_3=\frac{t_2}{t_2-1}$, and $t_2=\frac{3}{2}$. Therefore, the moduli space $\mathcal{M}_{\mathcal{A}}$ is irreducible.
\end{enumerate}

\item $L_8\cap L_9$ is not a triple point. Then $L_8\cap L_{10}$ and $L_9\cap L_{10}$ must be triple points. The equation of $L_{10}$ can be written as $L_{10}=\{y=\frac{t_2}{(1-t_1)(1-t_2)}(x-z)\}$. Since $L_{10}$ passes through 3 triple points, then $t_1=2$.  Checking out triple points on $L_4$, we see that either $L_8\cap L_4$ or $L_9\cap L_4$ should be a triple point. We may assume that $L_9\cap L_4\cap L_5$ is a triple point. Consequently, we must have  $t_3=\frac{t_1t_2}{t_1-1}$. Now we notice that there must be another triple point on $L_4$.  Either $L_4\cap L_7\cap L_8$ or $L_4\cap L_7\cap L_{10}$ is a triple point.
\begin{enumerate}
\item Assume that $L_4\cap L_7\cap L_8$ is a triple. Then $t_3=t_1$. It turns out that $t_2=1$. This implies that $L_2$ and $L_3$ coincide, which can not hold.
\item Assume that $L_4\cap L_7\cap L_{10}$ is a triple. Then $t_3=\frac{t_2}{t_2-1}$. It follows that $t_1=2$, $t_2=\frac{3}{2}$ and $t_3=3$.
\end{enumerate}

\end{itemize}

\item  Assume that only 1 triple point is apart from $L_5\cup L_6\cup L_7$. Let $L_8\cap L_9$ be the triple point apart from $L_5\cup L_6\cap L_7$. Since there are 9 triple points on $(L_8\cup L_9\cup L_{10})\cap (L_1\cup L_2\cup L_3
\cup L_4)$ by our assumption, we may assume that each of $L_8$ and $L_{10}$ passes through 3 points of $(L_1\cup L_2\cup L_3\cup L_4)\cap (L_5\cup L_6\cup L_7)$. Moreover, we may assume that $L_1\cap L_8$, $L_1\cap L_9$ and $L_1\cap L_{10}$ are triple points.
By lattice isomorphism, we may assume that $L_1\cap L_5\cap L_8$, $L_2\cap L_6\cap L_8$, $L_3\cap L_7\cap L_8$, $L_1\cap L_9$, $L_1\cap L_{10}$, $L_2\cap L_9$ and $L_4\cap L_8\cap L_9$  are triple points.  Now we determine the arrangements according to possible incidence on $L_9$ and $L_{10}$.
\begin{enumerate}
\item $L_2\cap L_9\in L_5$ and $L_1\cap L_9\in L_6$.  Defining equations of the lines $L_1$,$\dots$, $L_9$ can be written as follows:
$L_1=\{y=0\}$, $L_2=\{y=z\}$, $L_3=\{y=t_1z\}$, $L_4=\{y=\frac{1}{2}z\}$, $L_5=\{x=0\}$, $L_6=\{x=z\}$, $L_7=\{x=t_1z\}$, $L_8=\{y=x\}$ and $L_9=\{x+y=z\}$, where $t_1 \in \mathbb{C}\setminus\{0 ,1/2, 1\}$.
It is not hard to see that $L_{10}$  should contain either
$\{L_1\cap L_7, L_3\cap L_5, L_4\cap L_6\}$ or $\{L_1\cap L_7, L_3\cap L_6, L_4\cap L_5\}$.
\begin{enumerate}
\item $\{L_1\cap L_7, L_3\cap L_5, L_4\cap L_6\}\subset L_{10}$. Then $L_{10}=\{x+y=t_1z\}$ and $L_4\cap L_6=(1, \frac{1}{2}, 1)\in L_{10}$. It turns out that  $t_1=\frac{3}{2}$.

\item $\{L_1\cap L_7, L_3\cap L_6, L_4\cap L_5\}\subset L_{10}$. Then $L_{10}=\{y=-\frac{1}{2t_1}x+\frac{z}{2}\}$ and $L_3\cap L_6=(1, t_1, 1)\in L_{10}$. It turns out that $2t_1^2-t_1+1=0$. Therefore $\mathcal{M}^c_{\mathcal{A}}$ is irreducible.
\end{enumerate}

\item $L_2\cap L_9\in L_5$ and $L_1\cap L_9\in L_7$.
Defining equations of the lines $L_1$,$\dots$, $L_9$ can be written as follows:
$L_1=\{y=0\}$, $L_2=\{y=z\}$, $L_3=\{y=t_1z\}$, $L_4=\{y=t_2z\}$, $L_5=\{x=0\}$, $L_6=\{x=z\}$, $L_7=\{x=t_1z\}$, $L_8=\{y=x\}$ and $L_9=\{y=-\frac{1}{t_1}x+z\}$, where $t_1\neq t_2\in \mathbb{C}\setminus\{0, 1\}$ satisfy a equation $t_2=-\frac{t_2}{t_1}+1$.
It is not hard to see that $L_{10}$  should pass through $L_1\cap L_6$, $L_3\cap L_5$ and $L_4\cap L_7$.
Then a defining equation of $L_{10}$ can be written as $y=-t_1x+t_1z$. Moreover, since $L_4\cap L_7=(t_1, t_2, 1)\in L_{10}$, then $t_2=-t_1^2+t_1$. Together with the previous equation, we get $t_1=0$. Then there is no such arrangement.

\item $L_2\cap L_9\in L_7$, then $L_1\cap L_9\in L_6$.
Defining equations of the lines $L_1$,$\dots$, $L_9$ can be written as follows:
$L_1=\{y=0\}$, $L_2=\{y=z\}$, $L_3=\{y=t_1z\}$, $L_4=\{y=t_2z\}$, $L_5=\{x=0\}$, $L_6=\{x=z\}$, $L_7=\{x=t_1z\}$, $L_8=\{y=x\}$ and $L_9=\{y=\frac{1}{t_1-1}(x-z)\}$, where $t_1\neq t_2\in \mathbb{C}\setminus\{0, 1\}$ satisfy a equation $t_2=\frac{t_2-1}{t_1-1}$.
Now consider $L_{10}$. It is not hard to see that $L_{10}$  should contains either
$\{L_1\cap L_7, L_3\cap L_6, L_4\cap L_5\}$ or $\{L_1\cap L_7, L_3\cap L_5, L_4\cap L_6\}$.
\begin{enumerate}
\item $\{L_1\cap L_7, L_3\cap L_6, L_4\cap L_5\}\subset L_{10}$. Then $L_{10}=\{y=-\frac{t_2}{t_1}x+t_2z\}$ and $L_4\cap L_6=(1, t_1, 1)\in L_{10}$. Then $t_2=t_1^2+1$ and $t_1^3-2t_1^2+t_1-1=0$. The defining equation of $\mathcal{A}$ can be written as
\begin{equation}\label{eq:5.2-9-triples-3}\scriptstyle
xy(x-y)(x-z)(y-z)(x-tz)(y-tz)(y-\frac{t^2}{t-1}z)(y-\frac{1}{t-1}(x-z))(y+\frac{t^2}{t(t-1)}x-\frac{t^2}{t-1}z),
\end{equation}
where $t$ satisfies $t^3-2t^2+t-1=0$.
 The real arrangement is shown in Figure \ref{n4=1-5}.
 \begin{figure}[htbp]\centering\begin{tikzpicture}[domain=-1:6, scale=0.6]\tikzstyle{every node}=[font=\footnotesize]
\draw[domain=-2:5] plot (0, \x) node[above]{$L_5$};
\draw[domain=-2:5] plot (1, \x) node[above]{$L_6$};
\draw[domain=-2:5] plot (1.7548776662466927
, \x) node[above]{$L_7$};

\draw[domain=5:-1] plot (\x, 1.7548776662466927
) node[left]{$L_3$};
\draw[domain=5:-1] plot (\x, {1.7548776662466927
^2/(1.7548776662466927
-1)}) node[left]{$L_4$};
\draw[domain=5:-1] plot (\x, 0) node[left]{$L_1$};
\draw[domain=5:-1] plot (\x, 1) node[left]{$L_2$};

\draw[domain=5:-1] plot (\x, \x) node[left]{$L_8$};
\draw[domain=4.7:-.4] plot (\x, {1/(1.7548776662466927
-1)*(\x-1)}) node[left]{$L_9$};
\draw[domain=2.6:-0.4] plot (\x, {-1.7548776662466927
/(1.7548776662466927
-1)*\x+1.7548776662466927
^2/(1.7548776662466927
-1)}) node[left]{$L_{10}$};
\end{tikzpicture}
\caption{\label{n4=1-5}}
\end{figure}
\item $\{L_1\cap L_7, L_3\cap L_5, L_4\cap L_6\}\subset L_{10}$. Then $L_{10}=\{x+y=t_1z\}$ and $L_4\cap L_6=(1, t_2, 1)\in L_{10}$. Then $t_1$ and $t_2$ satisfy the following equations: $t_1^2-3t_1+3=0$ and $t_2=t_1-1$. Therefore, the quotient moduli space $\mathcal{M}^c_{\mathcal{A}}$ is irreducible.
\end{enumerate}
\end{enumerate}

\item  All triple points are on $L_5\cup L_6\cup L_7$.  Then each of the lines $L_8$, $L_9$ and $L_{10}$ passes exactly 3 triple points and the triple points are in $(L_1\cup L_2\cup L_3\cup L_4)\cap (L_5\cup L_6\cup L_7)$. We may assume that $L_8$ passes through $L_1\cap L_5$, $L_2\cap L_6$ and $L_3\cap L_7$.  Let $L_9$ be the line passing through $L_1\cap L_6$ and $L_{10}$ be the line passing through $L_1\cap L_7$. Notice that each of $L_5$, $L_6$ and $L_7$ passes through 4 triple points so that all 10 triple points will be on them.  Then $L_{10}$ must pass through either $L_3\cap L_6$ or $L_4\cap L_6$.
%Assume that the arrangement does not contain a MacLane arrangement as sub-arrangement.
\begin{enumerate}
\item
If $L_3\cap L_6\in L_{10}$, then $L_{10}$ must pass through $L_4\cap L_5$.
Consequently, $L_9$ should pass through $L_2\cap L_5$ and $L_4
\cap L_7$.
Then the defining equation can be written as
 \begin{equation*}\label{eq:str}
 \scriptstyle
 xy(x-z)(y-z)(x-y)(x-t_1z)(y-t_1z)(y-t_2 z)(y+x-z)(y-\frac{t_1}{1-t_1}(x-t_1z))  \tag{$\star$}
 \end{equation*}
 where $t_1=\frac{1\pm\sqrt{-1}}{2}$ and $t_2=1-t_1=\frac{1\mp\sqrt{-1}}{2}$.

\item
 If $L_4\cap L_6\in L_{10}$, then $L_{10}$ must pass through either $L_3\cap L_5$ or $L_2\cap L_5$.
 \begin{itemize}
 \item If $L_{10}$ passes through $L_3\cap L_5$, then $L_9$ passes through $L_2\cap L_5$ and $L_4\cap L_7$, or $L_2\cap L_7$ and $L_4\cap L_5$.
 \begin{itemize}\item Assume that  $L_2\cap L_5$ and $L_4\cap L_7$ are in $L_9$.  It is not hard to check that such an arrangement can not be realized.
 \item Assume that  $L_2\cap L_7$ and $L_4\cap L_5$ are in $L_9$.  By the permutation (8, 9, 10)(4, 3, 2)(5, 6, 7), the arrangement is isomorphic to the one defined by Equation $\eqref{eq:str}$.

 \end{itemize}
 \item If $L_{10}$ passes through $L_2\cap L_5$, then $L_9$ passes through $L_3\cap L_5$ and $L_4\cap L_7$. One can check that by the permutation (5, 7, 6)(10, 9, 8)(2, 4) the arrangement is isomorphic to the one defined by Equation $\eqref{eq:str}$.
 \end{itemize}
\end{enumerate}
\end{enumerate}

\paragraph{Case 3:} Assume that there are 9 triple points.

Then only 8 triple points are in $L_8\cup L_9\cup L_{10}$. Since we assume that each line passes through at least 3 multiple points,  those 8 triple points should be in $(L_1\cup L_2\cup L_3\cup L_4)\cap (L_8\cup L_9\cup L_{10})$. Moreover,
 at least 1 and at most 2 of the intersection points in $\{L_8\cap L_9, L_8\cap L_{10}, L_9\cap L_{10}\}$ should be triple points. In fact, if  none of  $\{L_8\cap L_9, L_8\cap L_{10}, L_9\cap L_{10}\}$ is a triple point, then
 one of $L_8$, $L_9$ and $L_{10}$ will pass through at most 2 multiple points. If all of $\{L_8\cap L_9, L_8\cap L_{10}, L_9\cap L_{10}\}$ are triple points, then one of $L_5$, $L_6$ and $L_7$ will pass through at most 2 multiple points.
 \begin{enumerate}
 \item Assume that $L_8\cap L_9$ is a triple point. Then $L_{10}$ passes through 3 of the double points in $(L_1\cup L_2\cup L_3\cup L_4)\cap (L_5\cup L_6\cup L_7)$ and each of $L_8$ and $L_9$ should pass through 2 of those double points. We may assume that $L_{10}$ passes through $L_1\cap L_5$, $L_2\cap L_6$ and $L_3\cap L_7$, and that $L_8\cap L_9$ is on $L_3$. Moreover, we may assume that $L_2\cap L_8$ and $L_1\cap L_9$ are triple points.  We can write down the defining equations of some of the lines as follows:
 $L_1=\{y=0\}$, $L_2=\{y=z\}$,  $L_3=\{y=t_1z\}$,  $L_4=\{y=t_2z\}$,  $L_5=\{x=0\}$, $L_6=\{x=z\}$, $L_7=\{x=t_1z\}$ and $L_{10}=\{y=x\}$, where $t_1\neq t_2\in \mathbb{C}\setminus\{0, 1\}$.
We have the following possible cases:
\begin{enumerate}
\item\label{a} If $L_2\cap L_5\in L_8$, $L_4\cap L_6\in L_8$, $L_4\cap L_5\in L_9$ and $L_1\cap L_7\in L_9$,
then $L_8=\{y=(t_2-1)x+z\}$ and $L_9=\{y=-\frac{t_2}{t_1}x+t_2z\}$. Since $L_3\cap L_8\cap L_9$ is a triple points, $t_1$ and $t_2$ satisfy the following equation: $t_1t_2^2-t_1^2t_2-2t_1t_2+t_1^2+t_2=0$ which defines an irreducible curve in $\mathbb{C}^2$.

\item $L_2\cap L_5\in L_8$, $L_4\cap L_6\in L_8$, $L_4\cap L_7\in L_9$ and $L_1\cap L_6\in L_9$, then $L_8=\{y=(t_2-1)x+z\}$ and $L_9=\{y=\frac{t_2}{t_1-1}(x-z)\}$. Since $L_3\cap L_8\cap L_9$ is a triple points, $t_1$ and $t_2$ satisfy the following equation: $t_2^2+t_1^2t_2-2t_1t_2-t_1^2+t_1=0$ which defines an irreducible curve in $\mathbb{C}^2$.

\item \label{c} $L_2\cap L_5\in L_8$, $L_4\cap L_7\in L_8$, $L_4\cap L_6\in L_9$ and $L_1\cap L_7\in L_9$, then $L_8=\{y=\frac{t_2-1}{t_1}x+z\}$ and $L_9=\{-\frac{t_2}{t_1-1}(x-t_1z)\}$, where $t_1$ and $t_2$ satisfy the following equation: $t_2^2-2t_1t_2+t_2+t_1-1=0$ which is irreducible.

\item $L_2\cap L_5\in L_8$, $L_4\cap L_7\in L_8$, $L_4\cap L_5\in L_9$ and $L_1\cap L_6\in L_9$, then $L_8=\{y=\frac{t_2-1}{t_1}x+z\}$ and $L_9=\{-t_2x+t_2z\}$, where $t_1$ and $t_2$ satisfy the following equation: $t_2^2-t_1^2t_2-t_2+t_1=0$ which is irreducible.

\item $L_2\cap L_7\in L_8$, $L_4\cap L_5\in L_8$, $L_1\cap L_7\in L_9$ and $L_4\cap L_6\in L_9$, then $L_8=\{y=-\frac{t_2-1}{t_1}x+t_2z\}$ and $L_9=\{y=-\frac{t_2}{t_1-1}(x-t_1z)\}$, where $t_1$ and $t_2$ satisfy the following equation: $t_1^2-t_1=0$. However, by our assumption, $t_1\in \mathbb{C}\setminus\{0, 1\}$. Therefore, this case can not be realized.

\item $L_2\cap L_7\in L_8$, $L_4\cap L_5\in L_8$, $L_1\cap L_6\in L_9$ and $L_4\cap L_7\in L_9$.  By the permutation $(8, 9)(5, 6)(1, 2)$, we see that an arrangement in this case is lattice isomorphic to an arrangement in case \eqref{c}.
%then $L_8=\{y=-\frac{t_2-1}{t_1}x+t_2z\}$ and $L_9=\{y=\frac{t_2}{t_1-1}(x-z)\}$where $t_1$ and $t_2$ satisfy the following equation: $t_2^2-%(2t_1+1)t_2+t_1=0$  which is irreducible.

\item $L_2\cap L_7\in L_8$, $L_4\cap L_6\in L_8$, $L_1\cap L_6\in L_9$ and $L_4\cap L_5\in L_9$.  By the permutation $(8, 9)(5, 6)(1, 2)$, we see that an arrangement in this case is isomorphic to a one in case \eqref{a}.

%then $L_8=\{y=-\frac{t_2-1}{t_1-1}(x-z)+t_2z\}$ and $L_9=\{-t_2x+t_2z)\}$, where $t_1$ and $t_2$ satisfy the following equation:  which is irreducible.

\item $L_2\cap L_7\in L_8$, $L_4\cap L_6\in L_8$, $L_1\cap L_7\in L_9$ and $L_4\cap L_5\in L_9$, then $L_8=\{y=-\frac{t_2-1}{t_1-1}(x-z)+t_2z\}$ and $L_9=\{y=-\frac{t_2}{t_1}x+t_2z)\}$, where $t_1$ and $t_2$ satisfy the following equation:  $t_1^2-2t_1t_2+t_2=0$ which is irreducible.

\end{enumerate}

 \item Assume that $L_8\cap L_9$ and $L_8\cap L_{10}$ are both triple. Since there are 8 triple points in $(L_8\cup L_9\cup L_{10})\cap (L_1\cup L_2\cup L_3\cup L_4)$, including $L_8\cap L_9$ and $L_8\cap L_{10}$, then one of those $L_8$, $L_9$ and $L_{10}$ will pass through 4 triple points. We may assume that either $L_8$ or $L_{10}$ passes through 4 triple points. Moreover, it is not hard to see that each of $L_1$, $L_2$, $\dots$, $L_7$ will pass through exactly 2 triple points so that there will be 3 multiple points on each of them.

 \begin{enumerate}
 \item Assume that $L_8$ passes through 4 triple points. By lattice isomorphism, we may assume that $L_1\cap L_5$, $L_2\cap L_9$, $L_3\cap L_{10}$ and $L_4\cap L_7$ are on $L_8$. Since $L_6$ passes three triple points, then  one of $\{L_1\cap L_9, L_3\cap L_9, L_4\cap L_9\}$
and one of $\{L_4\cap L_{10}, L_2\cap L_{10}, L_1\cap L_{10}\}$ should be triple points on $L_6$.
We can write the equations of $L_1$, $L_2$, $\dots$, $L_8$ as follows:
 $L_1=\{y=0\}$, $L_2=\{y=z\}$,  $L_3=\{y=t_2z\}$,  $L_4=\{y=t_3z\}$,  $L_5=\{x=0\}$, $L_6=\{x=z\}$, $L_7=\{x=t_1z\}$ and $L_{8}=\{y=\frac{t_3}{t_1}x\}$, where $t_1, t_2, t_3\in \mathbb{C}\setminus\{0, 1\}$ and $t_2\neq t_3$.
 Since $L_6$ passes through 3 triple points, up to a lattice isomorphism, we have the following possibilities:
\begin{itemize}
\item $L_1\cap L_6\cap L_9$, $L_4\cap L_6\cap L_{10}$, $L_3\cap L_5\cap L_9$ and $L_2\cap L_{10}\cap L_7$ are triple points.
 Then $L_9=\{y=-t_2x+t_2z\}$ and $L_{10}=\{y=\frac{t_3-1}{1-t_1}(x-t_1z)+z\}$, where $t_1=\frac{t_2t_3-t_3}{t_2}$ and $t_3^2+2t_2t_3+2t_2=0$. Therefore, the moduli space is irreducible.
 \item $L_1\cap L_6\cap L_9$, $L_4\cap L_6\cap L_{10}$, $L_3\cap L_7\cap L_9$ and $L_2\cap L_{10}\cap L_5$ are triple points.
 Then $L_9=\{y=\frac{t_2}{t_1-1}(x-z)\}$ and $L_{10}=\{y=(t_3-1)x+z\}$, where $t_1=\frac{t_2t_3-t_3}{t_2-t_3}$ and $t_3=\frac{2t_2}{t_2+1}$. Therefore, the moduli space is irreducible.
 \item $L_3\cap L_6\cap L_9$, $L_1\cap L_6\cap L_{10}$, $L_4\cap L_5\cap L_9$ and $L_2\cap L_{10}\cap L_7$ are triple points.  Then $L_9=\{y=(t_2-t_3)x+t_3z\}$ and $L_{10}=\{y=\frac{1}{t_1-1}(x-z)\}$, where $t_1=\frac{t_3^2-t_3}{t_3-t_2}$ and $t_2^2+(t_3^2-3t_3)t_2+t_3=0$. Therefore, the moduli space is irreducible.
 \item $L_3\cap L_6\cap L_9$, $L_2\cap L_6\cap L_{10}$, $L_4\cap L_5\cap L_9$ and $L_1\cap L_{10}\cap L_7$ are triple points.
 Then $L_9=\{y=(t_2-t_3)x+t_3z\}$ and $L_{10}=\{y=\frac{1}{1-t_1}(x-t_1z)\}$, where $t_1=\frac{t_3-t_3^2}{t_2-t_3}$ and $t_2=t_3-t_3^2$. Therefore, the moduli space is irreducible.
\end{itemize}

 \item Assume that $L_{10}$ passes through 4 triple points. By lattice isomorphism, we may assume that $L_1\cap L_5$, $L_2\cap L_6$, $L_3\cap L_7$ and $L_4\cap L_8$ are on $L_{10}$. Moreover, we may assume that $L_1\cap L_6\cap L_8$ is a triple point. Since $L_8\cap L_9$ is also a triple points, then $L_8\cap L_9 $ is either on $L_2$ or $L_3$. If $L_2\cap L_8\cap L_9$ is a triple point, then $L_4\cap L_7\cap L_9$, and $L_3\cap L_5\cap L_9$ should be triple points by our assumption.  If $L_3\cap L_8\cap L_9$ is a triple point, then $L_9$ should contain $\{L_4\cap L_7, L_2\cap L_5\}$
 or $\{L_2\cap L_7, L_4\cap L_5\}$. Similar to the case (a), with some elementary algebraic computations, we see that the moduli spaces are irreducible.
 \end{enumerate}
 \end{enumerate}
 \end{proof}

\subsection{All triple points are in the pencil of the quadruple point}
Assume that all the triple points are on the lines passing through the quadruple point. We first show that there are at most 11 triple points so that the arrangement is non-reductive.

\begin{lemma}\label{prop:quadruple+3-3triples}Let $\mathcal{A}$ be a non-reductive arrangement of $10$ lines with $1$ quadruple point so that all  triple points are on the lines passing through the quadruple point. Then
there are at most $11$ triple points.
\end{lemma}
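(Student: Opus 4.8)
The plan is to fix the quadruple point, bound how many triple points each line through it can carry, and then eliminate the extremal loading by a dual reformulation that turns the required collinearities into parallelisms.

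First I would set $Q=L_1\cap L_2\cap L_3\cap L_4$ and call $L_1,\dots,L_4$ the pencil lines and $L_5,\dots,L_{10}$ the remaining six. Since any two pencil lines meet only at $Q$, every triple point other than $Q$ lies on exactly one $L_i$ with $i\le 4$, and such a point has the form $L_i\cap L_a\cap L_b$ with $a,b\in\{5,\dots,10\}$. As each of $L_5,\dots,L_{10}$ meets $L_i$ in a single point, $L_i$ carries at most $3$ triple points; writing $t_i$ for this number we get $t_i\le 3$, and because each triple point is counted on a unique $L_i$ we have $n_3=t_1+t_2+t_3+t_4\le 12$ (the same bound also follows from Lemma~\ref{lemma:intersection-formula} and Theorem~\ref{thm:Hirzebruch}). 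It remains to exclude the top values.

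The key step is a reformulation. Choose affine coordinates with $Q$ the origin and no $L_{4+k}$ vertical, so $L_{4+k}=\{y=\alpha_k x+\beta_k\}$ with $\beta_k\neq 0$ (indeed no $L_{4+k}$ passes through $Q$, else $Q$ would have multiplicity $5$). A pencil line of slope $c$ contains $L_{4+k}\cap L_{4+\ell}$ exactly when $\frac{\alpha_k\beta_\ell-\alpha_\ell\beta_k}{\beta_\ell-\beta_k}=c$, and a direct computation shows this quantity equals the slope of the segment joining the dual points $\Pi_k=(1/\beta_k,\ \alpha_k/\beta_k)$ and $\Pi_\ell$. Thus ``the triple points on $L_i$ are collinear on $L_i$'' becomes ``the corresponding segments $\Pi_k\Pi_\ell$ are mutually parallel.'' Distinct lines give distinct $\Pi_k$, and three of the $L_{4+k}$ are concurrent iff the three $\Pi_k$ are collinear; such a concurrence would create either a triple point off the pencil or a point of multiplicity at least $4$, both forbidden, so $\Pi_1,\dots,\Pi_6$ are in general position.

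Now if $t_i=3$ then the three triple points on $L_i$ use all six of $L_5,\dots,L_{10}$ (each meets $L_i$ once) and so form a perfect matching $M_i$ of $\{1,\dots,6\}$, realized as a parallel class among the $\Pi_k$; distinct pencil lines yield edge-disjoint matchings. I claim no three of the $t_i$ equal $3$. Three pairwise edge-disjoint perfect matchings of $K_6$ are, by uniqueness of its $1$-factorization, a relabelling of $\{12,34,56\}$, $\{13,25,46\}$, $\{14,26,35\}$. Normalizing $\Pi_1=(0,0)$, $\Pi_2=(1,0)$, $\Pi_3=(0,1)$ (possible since these are not collinear), the first two classes force $\Pi_4=(a,1)$, $\Pi_5=(1,d)$, $\Pi_6=(a,d)$, and the third class requires both $ad=a-1$ and $ad=a+1$, which is impossible. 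Hence at most two of the $t_i$ can equal $3$ and the others are at most $2$, so $n_3\le 3+3+2+2=10$, in particular $n_3\le 11$ as claimed.

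The main obstacle is locating the dual reformulation $\Pi_k=(1/\beta_k,\alpha_k/\beta_k)$ that converts collinearity of triple points on a pencil line into parallelism of segments; after that the contradiction drops out of three scalar equations. The only inputs needed to justify it are that no $L_{4+k}$ passes through $Q$ (giving $\beta_k\neq 0$) and a generic affine frame avoiding verticality, while the general position of the $\Pi_k$ comes directly from the hypothesis that every triple point lies on the pencil.
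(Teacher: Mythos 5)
Your dual reformulation is correct and elegant: the identity making the slope of $\Pi_k\Pi_\ell$ (with $\Pi_k=(1/\beta_k,\alpha_k/\beta_k)$) equal to the slope of the pencil line through $L_{4+k}\cap L_{4+\ell}$ checks out, distinct lines give distinct dual points, concurrence of lines corresponds to collinearity of dual points, and the matchings $M_i$ attached to pencil lines with $t_i=3$ are indeed pairwise edge-disjoint. This is a genuinely different route from the paper, which assumes $12$ triple points and grinds through a lattice case analysis with explicit coordinates. However, your combinatorial step is false, and the error is fatal. Three pairwise edge-disjoint perfect matchings of $K_6$ are \emph{not} unique up to relabelling. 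Your triple $\{12,34,56\}$, $\{13,25,46\}$, $\{14,26,35\}$ has union equal to the triangular prism (it contains the triangle with edges $13,34,14$), and these prism-type triples are exactly the ones that extend to a $1$-factorization; your parallelism computation correctly rules out this type. But the triple $\{12,34,56\}$, $\{23,45,61\}$, $\{14,25,36\}$ is also pairwise edge-disjoint and is \emph{not} a relabelling of yours: its union is $K_{3,3}$ with parts $\{1,3,5\}$, $\{2,4,6\}$ (triangle-free), and it does not extend to a $1$-factorization (its complement in $K_6$ is two triangles), so uniqueness of the $1$-factorization of $K_6$ says nothing about it. Worse, this type \emph{is} realizable by six points in general position: take $\Pi_1=(0,0)$, $\Pi_2=(1,0)$, $\Pi_3=(0,1)$, $\Pi_4=(1,1)$, $\Pi_5=(3/2,1/2)$, $\Pi_6=(-1/2,1/2)$; then $\{12,34,56\}$ is a horizontal class, $\{23,45,61\}$ a class of slope $-1$, and $\{14,25,36\}$ a class of slope $1$. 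Consequently your conclusion $n_3\le 10$ is false: the theorem immediately following this lemma in the paper exhibits non-reductive arrangements with one quadruple point and $11$ triple points, all on the pencil, with three pencil lines carrying $3$ triple points each --- concretely, the ten lines $\{y=0\}$, $\{y=\pm z\}$, $\{x=0\}$, $\{x=\pm z\}$, $\{y=\pm x\}$, $\{y=x\pm z\}$, whose three full matchings have union exactly of the $K_{3,3}$ type above.

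The argument can be repaired so that it proves exactly the stated (sharp) bound, and then it remains a nice alternative to the paper's proof. Suppose all four pencil lines carried three triple points each, giving four pairwise edge-disjoint perfect matchings $M_1,\dots,M_4$ of $K_6$. The three edges of $K_6$ not used meet each vertex exactly once, hence form a fifth perfect matching, so $M_1,\dots,M_4$ lie inside a $1$-factorization. Now any three factors of a $1$-factorization extend to a $1$-factorization by definition, and since $K_{3,3}$-type triples never extend, every triple among $M_1,\dots,M_4$ is of prism type. Your normalization $\Pi_1=(0,0)$, $\Pi_2=(1,0)$, $\Pi_3=(0,1)$, forcing both $ad=a-1$ and $ad=a+1$, then applies verbatim to such a triple and yields a contradiction. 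Hence at most three of the $t_i$ equal $3$, so $n_3\le 3+3+3+2=11$, which is the lemma's bound and, by the example above, cannot be improved.
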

\begin{proof}
Assume that the quadruple point is $L_3\cap L_4\cap L_7\cap L_8$ and there are 12 triple points. Then each of those 4 lines will pass through exactly 3 triple points. However, we will show that it can not be realized. We may assume that $L_1\cap L_2\cap L_3$ and $L_4\cap L_5\cap L_6$ are two triple points.  By lattice isomorphism, we see that the arrangement must contain the following sub-arrangement (see Figure \ref{n_4-0-sub}) so that each of $L_3$ and $L_4$ passes through 3 triple points.
\begin{figure}[htbp]\centering
\begin{tikzpicture}[domain=-2:4, scale=0.6]\tikzstyle{every node}=[font=\footnotesize]
\draw[domain=-2:4] plot (0, \x) node[above]{$L_5$};
\draw[domain=-2:4] plot (1, \x) node[above]{$L_4$};
\draw[domain=-2:4] plot (2, \x) node[above]{$L_6$};

\draw[domain=4:-2] plot (\x, 0) node[left]{$L_2$};
\draw[domain=4:-2] plot (\x, 1) node[left]{$L_3$};
\draw[domain=4:-2] plot (\x, 2) node[left]{$L_1$};

\draw[domain={0.8:1.2}] plot (\x, \x) node[right]{$L_{7}$};
\draw[domain={0.8:1.2}] plot (\x, 2-\x) node[below]{$L_{8}$};
\draw[domain=-2:3] plot (\x, {\x+1}) node[right]{$L_9$};
\draw[domain=-1:4] plot (\x, {(\x-1)}) node[right]{$L_{10}$};
\end{tikzpicture}
\caption{\label{n_4-0-sub}}\end{figure}

Since $L_7$ also passes through 3 triple points, then either $L_7\cap L_9$ or $L_7\cap L_{10}$ should be a triple point. Up to a lattice isomorphism, we may assume that $L_7\cap L_9$ is a triple point.

If  $L_7\cap L_9\cap L_{10}$ is a triple point, then either $\{L_2\cap L_5, L_1\cap L_6\}$ or $\{L_1\cap L_5, L_2\cap L_6\}$ should be in $L_7$ so that $L_7$ will pass through 3 triple points. If $\{L_2\cap L_5, L_1\cap L_6\}\subset L_7$, then the only possible triple points are $L_1\cap L_5$,  $L_2\cap L_6$, $L_2\cap L_9$, $L_6\cap L_9$, $L_1\cap L_{10}$, $L_5\cap L_{10}$. Among any 3 of them, 2 of the 3 points are on the same line. Therefore, $L_8$ can not pass through 3 triple points in this case.
If $\{L_1\cap L_5, L_2\cap L_6\}\subset L_7$, then the only possible triple points are $L_2\cap L_5$,  $L_1\cap L_6$, $L_2\cap L_9$, $L_6\cap L_9$, $L_1\cap L_{10}$, $L_5\cap L_{10}$. Among any 3 of them, 2 of the 3 points are on the same line. Therefore, $L_8$ can not pass through 3 triple points in this case.

 Now consider the case that $L_7\cap L_9\cap L_{10}$ is not a triple point. So either $L_2\cap L_7\cap L_9$ or $L_6\cap L_7\cap L_{9}$ is a triple.
By switching the labels between $L_2$ and $L_6$ and others accordingly, we assume that $L_2\cap L_7\cap L_9$ is a triple point.

 \begin{itemize}
\item If $L_7\cap L_{10}$ is not a triple, then only one more point, $L_1\cap L_5$ or $L_1\cap L_6$ could be a triple point on $L_7$. Thus there will be at most 2 triple points on $L_7$.

\item If $L_7\cap L_{10}$ is a triple point, then it can not be on $L_1$. Otherwise, $L_7$ will have only two triple points $L_1\cap L_7\cap L_{10}$ and $L_2\cap L_7\cap L_9$, because all the possible triple points are on $L_1\cup L_{10}\cup L_2\cup L_9$.

Therefore, $L_5 \cap L_7\cap L_{10}$ is the triple point. And the third triple points on $L_7$ must be $L_1\cap L_6\cap L_7$.  % Similarly, $L_8$ must pass through $L_6\cap L_9$, $L_1\cap L_{10}$ and $L_2\cap L_5$.

 Now we can write down equations of lines of the sub-arrangement $\mathcal{A}\setminus\{L_8\}$ as follows: $L_1=\{y=t_2z\}$, $L_2=\{y=-z\}$, $L_3=\{y=0\}$, $L_4=\{x=0\}$, $L_5=\{y=-z\}$, $L_6=\{y=t_1z\}$, $L_9=\{y=t_2x+t_2z\}$, $L_{10}=\{y=\frac{1}{t_1}x-z\}$, and $L_7=\{y=\frac{t_2}{t_1}x\}$, where $t_1, t_2\in\mathbb{C}\setminus\{0, 1\}$. The two variables must satisfy the following equations associated to the triple points:
\begin{align*}
&L_2\cap L_7\cap L_9\neq\emptyset:&  t_1=t_2+1, \\
&L_6\cap L_7\cap L_9\neq\emptyset:& t_2=t_1+1.
 \end{align*}
However, those 2 equations have no common solution. So $L_7$ can not passes through 3 triple points if $L_7\cap L_9\cap L_{10}$ is not a triple point.
\end{itemize}
\end{proof}

The classification will run on numbers of triple points.

\begin{thm}Let $\mathcal{A}$ be a non-reductive arrangement of $10$ lines with $1$ quadruple point and $11$ triple points such that all triple points are on the $4$ lines passing through the quadruple point. Then the moduli space $\mathcal{M}_{\mathcal{A}}$ is irreducible.
\end{thm}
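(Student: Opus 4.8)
The plan is to first pin down the combinatorial skeleton completely and only then realize it in coordinates. Since the quadruple point has multiplicity exactly $4$, no two of the four lines through it (the \emph{pencil lines}) can meet a second time, so every triple point consists of exactly one pencil line together with two of the six remaining lines $L_5,\dots,L_{10}$; call the latter the \emph{free} lines. In particular no three free lines are concurrent, hence the $\binom{6}{2}=15$ pairwise intersections of the free lines are distinct, and exactly the $11$ of them through which a pencil line passes are the triple points, the other $4$ being double points. A second count (each of the $4\cdot 6=24$ pencil--free incidences is either one of the $2\cdot 11=22$ incidences absorbed by a triple point, or a double point) produces $2$ further double points, so $n_2=6$, consistent with Lemma~\ref{lemma:intersection-formula}.

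The key rigidity comes from bounding the number $m_i$ of triple points on each pencil line $L_i$. If two triple points on $L_i$ shared a free line $L_a$, then $L_a$ and $L_i$ would meet in two points, which is impossible; hence the triple points on $L_i$ use $2m_i$ \emph{distinct} free lines, and since there are only six free lines we get $m_i\le 3$. Non-reductivity forces $m_i\ge 2$, and $\sum_i m_i=11$, so the multiplicities are forced to be $(m_1,m_2,m_3,m_4)=(3,3,3,2)$ up to relabeling. Thus each of the three pencil lines carrying $3$ triple points induces a \emph{perfect matching} of the six free lines, while the fourth induces a matching on four of them. I would then record the few combinatorial types up to permutation of the free lines, using that the three perfect matchings are pairwise edge-disjoint (so their union is a $3$-regular graph on $6$ vertices, whose complement is $C_6$ or $2C_3$), together with the placement of the remaining two-edge matching.

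For realizability and irreducibility I would pass to coordinates. A dimension count---each triple point imposes one condition and the quadruple point two, against the $20-8=12$ moduli of ten lines up to $PGL(3,\mathbb{C})$---gives expected dimension $12-(11+2)=-1$, so the configuration is rigid and $\mathcal{M}_{\mathcal{A}}$ is expected to be $0$-dimensional; proving irreducibility then amounts to proving that the realization is \emph{unique} up to projective transformation. Concretely I would send the quadruple point and three triple points in general position to the standard reference points, write the four pencil lines through the quadruple point and the six free lines with undetermined coefficients, and convert each of the $11$ prescribed concurrences into a polynomial equation. Because the $(3,3,3,2)$ pattern is so constrained, solving this system should determine every remaining coefficient and collapse the parameter space to a single point.

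The hard part will be the last step: verifying, for each combinatorial type surviving the enumeration, that the over-determined incidence system has a \emph{single} solution rather than a finite Galois orbit that could split into complex-conjugate points. This is exactly the distinction between $\mathcal{M}_{\mathcal{A}}$ and $\mathcal{M}^c_{\mathcal{A}}$: a unique (rational) solution yields the stated irreducibility of $\mathcal{M}_{\mathcal{A}}$ itself, whereas a conjugate pair would only give irreducibility after quotienting. I would therefore track the equations defining the slopes of the pencil lines with care, expecting the rigidity of the $(3,3,3,2)$ pattern---three perfect matchings imposing enough collinearity relations---to force the configuration to a single point with no quadratic, and hence no conjugate, ambiguity remaining.
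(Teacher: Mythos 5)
Your combinatorial reduction is sound, and it is essentially the same skeleton the paper uses implicitly: each triple point contains exactly one pencil line, two triple points on a pencil line cannot share a free line, so each pencil line carries at most $3$ triple points and the distribution is forced to be $(3,3,3,2)$; the three saturated pencil lines then induce pairwise edge-disjoint perfect matchings on the six free lines. The genuine gap is that your proof stops exactly where the theorem begins. You never carry out the enumeration of the admissible lattice types, nor solve the resulting incidence equations; instead you assert the expected outcome (``solving this system \emph{should} determine every remaining coefficient,'' ``\emph{expecting} \dots no quadratic, and hence no conjugate, ambiguity'') as a hope. That outcome is precisely the content of the theorem, and it cannot be extracted from your rigidity heuristic. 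The expected-dimension count $12-(11+2)=-1$ does not preclude a positive-dimensional solution set (the incidence conditions may be dependent), and rigidity does not preclude a Galois orbit of several points: in this very paper the neighbouring case with the same hypotheses but $10$ triple points (Theorem~\ref{prop:n4=1-0-n3=10}) is equally ``rigid'' yet yields $t=\frac{1\pm\sqrt{-3}}{2}$, so that only $\mathcal{M}^c_{\mathcal{A}}$, not $\mathcal{M}_{\mathcal{A}}$, is irreducible there; and in the $n_4=2$ section rigid lattices produce orbits involving $\pm\sqrt{5}$ and $\pm\sqrt{2}$, giving genuinely reducible moduli spaces. Whether one gets a single rational point or a nontrivial Galois orbit is exactly what distinguishes this theorem from its neighbours, and it can only be settled by the computation you defer.

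For comparison, the paper's proof consists of that computation. Building on the proof of the preceding lemma (impossibility of $12$ triple points), it pins down a one-parameter normal form: $L_7\cap L_9\cap L_{10}$ must be a triple point; the alternative $\{L_1\cap L_5,\ L_2\cap L_6\}\subset L_7$ is shown to be unrealizable, so $\{L_2\cap L_5,\ L_1\cap L_6\}\subset L_7$; and all lines except the last pencil line $L_8$ are written in coordinates depending on a single parameter $t$. The six candidate points available to $L_8$ are listed explicitly, and each of the nine admissible choices of two of them forces $t$ to a single rational value in $\{-1,\,2,\,\tfrac12\}$, so each realizable lattice gives exactly one point of $\mathcal{M}_{\mathcal{A}}$. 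Your matching framework would organize these nine cases nicely and is a perfectly viable way to structure the argument, but until the cases are actually solved, the irreducibility of $\mathcal{M}_{\mathcal{A}}$ (as opposed to $\mathcal{M}^c_{\mathcal{A}}$) remains unproven.
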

\begin{proof}
Let $L_3\cap L_4\cap L_7\cap L_8$ be the quadruple point. We may assume that each of $L_3$, $L_4$, and $L_7$ passes through 3 triple points and $L_8$ passes through 2 triple points.
As we have seen in the proof of Proposition \ref{prop:quadruple+3-3triples}, $L_7\cap L_9\cap L_{10}$ must be a triple point.  Then either $\{L_2\cap L_5, L_1\cap L_6\}$ or $\{L_1\cap L_5, L_2\cap L_6\}$ is in $L_7$ so that $L_7$ will pass through 3 triple points.
\begin{itemize}
\item $\{L_2\cap L_5, L_1\cap L_6\}\subset L_7$.   By lattice isomorphisms and automorphisms of the dual projective plane, we may assume that the arrangement contains the following sub-arrangement (Figure \ref{n_4=1-11triples}):
\begin{figure}[htbp]\centering
\begin{tikzpicture}[domain=-3:3, scale=0.6]\tikzstyle{every node}=[font=\footnotesize]
\draw[domain=-3:3] plot (0, \x) node[above]{$L_4$};
\draw[domain=-3:3] plot (1, \x) node[above]{$L_5$};
\draw[domain=-3:3] plot (2, \x) node[above]{$L_6$};

\draw[domain=3:-3] plot (\x, 0) node[left]{$L_3$};
\draw[domain=3:-3] plot (\x, 1) node[left]{$L_2$};
\draw[domain=3:-3] plot (\x, 2) node[left]{$L_1$};

\draw[domain={-2.5:3}] plot (\x, \x) node[right]{$L_{7}$};
\draw[domain=-3:3] plot (\x, {-0.5*\x+1}) node[right]{$L_{10}$};
\draw[domain=2.5:-.5] plot (\x, {-2*\x+2}) node[above]{$L_{9}$};
\end{tikzpicture}
\caption{\label{n_4=1-11triples}}
\end{figure}
Defining equations of the lines can be written as: $L_1=\{y=tz\}$, $L_2=\{y=z\}$, $L_3=\{y=0\}$, $L_4=\{x=0\}$, $L_5=\{x=z\}$, $L_6=\{y=tz\}$, $L_7=\{y=x\}$, $L_9=\{y=-tx+tz\}$ and $L_{10}=\{y=-\frac{1}{t}x+z\}$, where $t\in\mathbb{C}$ and $t\neq 0, 1$.

The only possible triple points are $L_1\cap L_5=(1:t:1)$,  $L_2\cap L_6=(t:1:1)$, $L_2\cap L_9=(\frac{t-1}{t}:1:1)$, $L_6\cap L_9=(t:t-t^2:1)$, $L_1\cap L_{10}=(t-t^2:t:1)$, $L_5\cap L_{10}=(1:1-\frac{1}{t}:1)$. Among any three, two of them are on the same line. Since $L_8$ passes through 2 triple points,  the possibilities are as follows:
\begin{enumerate}
\item If $L_8$ passes through $L_1\cap L_5$ and  $L_2\cap L_6$, then $t=-1$.
\item If $L_8$ passes through $L_1\cap L_5$ and $L_2\cap L_9$, then $t=2$.
\item If $L_8$ passes through $L_1\cap L_5$ and $L_6\cap L_9$, then $t=1/2$.
\item If $L_8$ passes through $L_1\cap L_{10}$ and $L_2\cap L_6$, then $t=1/2$.
\item If $L_8$ passes through $L_1\cap L_{10}$ and $L_2\cap L_9$, then $t=-1$.
\item If $L_8$ passes through $L_1\cap L_{10}$ and $L_6\cap L_9$, then $t=2$.
\item  If $L_8$ passes through $L_2\cap L_6$ and $L_5\cap L_{10}$, then $t=2$.
\item If  $L_8$ passes through $L_2\cap L_9$ and $L_5\cap L_{10}$, then $t=1/2$.
\item  If $L_8$ passes through $L_6\cap L_9$ and $L_5\cap L_{10}$, then $t=-1$.
\end{enumerate}

\item If $\{L_1\cap L_5, L_2\cap L_6\}\subset L_7$, then by an automorphism of the dual plan, we can write defining equations of the lines as follows: $L_1=\{y=t_2z\}$, $L_2=\{y=z\}$, $L_3=\{y=0\}$, $L_4=\{x=0\}$, $L_5=\{x=z\}$, $L_6=\{y=t_1z\}$,  $L_9=\{y=-t_2x+t_2z\}$, $L_{10}=\{y=-\frac{1}{t_1}x+z\}$, $L_7=\{y=t_2x\}$, where $t_1, t_2\in \mathbb{C}\setminus\{0, 1\}$. Since $L_7$ passes through $L_2\cap L_6$, then $t_1$, $t_2$ satisfy the following equation: $t_1t_2=1$. Since $L_7\cap L_9\cap L_{10}$ is a triple point, then $t_2=1$. Therefore this case can not be realized.
\end{itemize}

\end{proof}

\begin{thm}\label{prop:n4=1-0-n3=10}
Let $\mathcal{A}$ be a non-reductive arrangement of $10$ lines with $1$ quadruple point and $10$ triple points such that all triple points are on the 4 lines passing through the quadruple point. Then the quotient moduli space $\mathcal{M}^c_{\mathcal{A}}$ is irreducible.%, or the arrangement is isomorphic to one of the arrangements defined by equations \eqref{eq:n_4=1-0-n_3=10-1}  and \eqref{eq:n_4=1-0-n_3=10-3}.
\end{thm}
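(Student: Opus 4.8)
The plan is to reduce the statement to a finite combinatorial enumeration followed by an explicit coordinate computation in each case, exactly in the spirit of the preceding theorems. First I would fix the combinatorial skeleton. By Lemma \ref{lemma:intersection-formula} the data $n_4 = 1$, $n_3 = 10$ force $n_2 = 9$. Write the quadruple point as $p = L_1 \cap L_2 \cap L_3 \cap L_4$ and call $L_1, \dots, L_4$ the \emph{pencil lines} and $L_5, \dots, L_{10}$ the \emph{non-pencil lines}. Since two distinct pencil lines meet only at $p$, every triple point lies on exactly one pencil line, and hence involves exactly one pencil line and two non-pencil lines. Non-reductivity forces each pencil line to carry at least two triple points (the quadruple point plus two triples gives its three required multiple points), while each pencil line meets only six non-pencil lines and so carries at most three triple points; as the total is $10$, the distribution of triple points over $L_1, \dots, L_4$ must be $(3,3,2,2)$. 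Counting the $2 \cdot 10 = 20$ incidences between non-pencil lines and triple points, and using that each non-pencil line carries between $3$ (non-reductivity) and $4$ (it meets only four pencil lines, each usable once) triple points, pins down that exactly two non-pencil lines pass through $4$ triple points and four pass through $3$.

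Second, I would enumerate the lattice-isomorphism classes compatible with this skeleton. After relabeling I may assume $L_1, L_2$ carry three triple points and $L_3, L_4$ carry two. The remaining freedom is the subgroup of relabelings preserving the pencil and the above numerics; using it I would list the finitely many incidence patterns, each time specifying which pair of non-pencil lines meets each pencil line at a triple point, and discarding any pattern lattice-isomorphic to one already treated. Note that Lemma \ref{lemma:extend-irreduciblity} is \emph{not} available here, since every line passes through at least three multiple points, so no line can be deleted; this is precisely why an explicit computation is unavoidable.

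Third, for each surviving pattern I would choose a projective frame: using $PGL(3, \mathbb{C})$ I send $p$ and several of the forced incidences to coordinate vertices and coordinate lines, so that the pencil lines and a couple of non-pencil lines acquire standard equations and only one or two parameters (say $t$, or $t_1, t_2$) remain free, as in Case 3 of the preceding theorem and in Proposition \ref{prop:quadruple+3-3triples}. Writing the equations of the remaining lines and imposing the concurrency of every remaining triple point yields, in each case, a small system of polynomial equations in the parameters. Solving it, the realization locus is either empty (the pattern is unrealizable, or two lines are forced to coincide), or an explicit subvariety which one checks is irreducible, or a finite set defined over $\mathbb{Q}$ that splits into complex-conjugate points; in the last situation the quotient by complex conjugation collapses the conjugate points, so $\mathcal{M}^c_{\mathcal{A}}$ is irreducible. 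In every realizable case the conclusion is therefore that $\mathcal{M}^c_{\mathcal{A}}$ is irreducible.

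The main obstacle is the combinatorial enumeration rather than the algebra: one must organize the incidence patterns so that the symmetry group is exploited efficiently, correctly recognize when two patterns are lattice-isomorphic in order not to double-count, and identify the unrealizable patterns early. Once a pattern survives, the associated polynomial system is of low degree in one or two parameters and is routine to solve; the only subtlety is to track when the roots form a single complex-conjugation orbit, which is exactly what forces us to pass from $\mathcal{M}_{\mathcal{A}}$ to $\mathcal{M}^c_{\mathcal{A}}$ in the statement.
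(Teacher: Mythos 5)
Your proposal follows essentially the same route as the paper's own proof: the identical combinatorial reduction (the pencil-line distribution $(3,3,2,2)$ of triple points, and the incidence count $a+b=6$, $4a+3b=20$ forcing exactly two non-pencil lines through four triple points), followed by an enumeration of incidence patterns up to lattice isomorphism and explicit one- or two-parameter coordinate computations whose realizable solutions turn out to be single rational points or complex-conjugate pairs (e.g.\ $t^2-t+1=0$). The paper simply carries out in full the case analysis you outline, organizing it by whether the two four-point non-pencil lines meet at a triple point on a pencil line and by which lines play that role.
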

\begin{proof}Let $L_3\cap L_4\cap L_7\cap L_8$ be the quadruple point.  Since there are 10 triple points on those 4 lines and we know that each of the 4 lines passes through at least 2 and at most 3 triple points, then we may assume that each of $L_3$ and $L_4$ passes through 3 triple points.
On the other hand, each of the other lines passes through at least 3 and at most 4 triple points.  Let $a$ and $b$ be the numbers of lines in $\{L_1, L_2, L_5, L_6, L_9, L_{10}\}$ which pass through 4 and 3 triple points, respectively.  Then $a$ and $b$ should satisfy the following system of equations:
\begin{align*}a+b&=6\\ 4a+3b&=20.
\end{align*}
It follows that $a=2$ and $b=4$.  Assume that $L$ and $L'$ are the two lines such that each passes through 4 triple points. Then either $L\cap L'$ is a triple point on $L_3\cup L_4$, or not. If $L\cap L'$ is a triple points on $L_3\cup L_4$, we can assume that $L=L_1$ and $L'=L_2$ and $L\cap L'$ is on $L_3$. If $L\cap L'$ is not a triple point, we can assume that $L=L_1$, $L'\in\{L_5, L_6\}$. Moreover, we may assume that $L_1\cap L_2\cap L_3$ is a triple point and $L_4\cap L_5\cap L_6$ is a triple point.

Up to the permutation $(5, 6)$,  we may assume the arrangements contain the following sub-arrangement (Figure \ref{n_4-1-10triples}).
\begin{figure}[htbp]\centering
\begin{tikzpicture}[domain=-3:3, scale=0.6]\tikzstyle{every node}=[font=\footnotesize]
\draw[domain=-3:3] plot (0, \x) node[above]{$L_4$};
\draw[domain=-3:3] plot (1, \x) node[above]{$L_5$};
\draw[domain=-3:3] plot (2, \x) node[above]{$L_6$};

\draw[domain=3:-3] plot (\x, 0) node[left]{$L_3$};
\draw[domain=3:-3] plot (\x, 1) node[left]{$L_2$};
\draw[domain=3:-3] plot (\x, 2) node[left]{$L_1$};

\draw[domain={0.25:-0.25}] plot (\x, 2*\x) node[below]{$L_{8}$};
\draw[domain={0.5:-0.5}] plot (\x, 0.5*\x) node[left]{$L_{7}$};
\draw[domain=-3:3] plot (\x, {-0.5*\x+1}) node[right]{$L_{10}$};
\draw[domain=2.5:-.5] plot (\x, {-2*\x+2}) node[above left]{$L_{9}$};
\end{tikzpicture}
\caption{\label{n_4-1-10triples}}
\end{figure}
We can write down the defining equations of the lines as follows: $L_1=\{y=t_2z\}$, $L_2=\{y=z\}$, $L_3=\{y=0\}$, $L_4=\{x=0\}$, $L_5=\{x=z\}$, $L_6=\{y=t_1z\}$,  $L_9=\{y=-t_2x+t_2z\}$, and $L_{10}=\{y=-\frac{1}{t_1}x+z\}$, where $t_1, t_2\in \mathbb{C}\setminus\{0, 1\}$.
It is not hard to check that that $L_7$ or $L_8$ will pass through an extra triple point $L_9\cap L_{10}$, if $L_7$ or $L_8$ passes through $L_1\cap L_6$ and $L_2\cap L_5$. So we should assume that $L_1\cap L_6$ and $L_2\cap L_5$ are not on $L_7$ or $L_8$.  \begin{enumerate}
\item Assume that $L_2$ also passes through 4 triple points.  It is not hard to see that $L_9\cap L_{10}$ can not be a triple point on $L_7\cup L_8$, otherwise, either $L_1$ or $L_2$ will pass through at most 3 triple points.  Thus $L_1\cap L_{10}$ and $L_2\cap L_9$ should be triple points on $L_7\cup L_8$.
 By lattice isomorphisms, we may assume that $L_7$ passes $L_2\cap L_9$. Since there are 4 triple points on $L_1$, $L_1\cap L_7$ should be a triple point.
\begin{enumerate}
\item Assume that
 $L_7$ passes through $L_1\cap L_5$. Then $L_7=\{y=t_2x\}$ and $t_2=2$. By assumption,  $L_8$ should pass through $L_2\cap L_6$ and $L_1\cap L_{10}$. Then $L_8=\{y=\frac{1}{t_1}x\}$. The fact that $L_1\cap L_8\cap L_{10}$ is a triple point implies that $t_2=\frac{1}{2}$. Therefore, this case can not be realized.
\item Assume that  $L_7$ passes through $L_1\cap L_6$. Then $L_8$ passes through $L_2\cap L_5$ and $L_1\cap L_{10}$. Therefore,  $t_1=t_2-1$ and $t_2+t_1(t_2-1)=0$. It follows that $t_2^2-t_2+1=0$ and $t_1=t_2-1$. The defining equation of the arrangement can be written as the following
\begin{equation*}\label{eq:n_4=1-0-n_3=10-1}
\scriptstyle
xy(x-z)(y-z)(x-(t-1)z)(y-t z)(y-\frac{t}{t-1}x)(y-x)(y+t x-t z)(y+\frac{1}{t-1}x-z),
\end{equation*}
where $t=\frac{1\pm\sqrt{-3}}{2}$.
\item Assume that  $L_7$ passes through $L_1\cap L_{10}$. Then $L_8$ passes through $L_1\cap L_5$ and $L_2\cap L_6$. Therefore,  $t_1=-1$ and  $t_2=-1$.
\end{enumerate}

\item Assume that $L_5$ passes through 4 triple points.  We claim that $L_9\cap L_{10}$ can not be on $L_7\cup L_8$. Assume, in contrary,  that $L_9\cap L_{10}$ is on $L_7$, then $L_7$ must pass through $L_1\cap L_5$ and $L_8$ must pass through $L_2\cap L_5$ and $L_1\cap L_6$ by assumptions.  However,  if $L_8$ passes through $L_2\cap L_5$ and $L_1\cap L_6$, then it also passes through $L_9\cap L_{10}$.  That is impossible. On the other hand,  if none of $L_7$ and $L_8$ passes through $L_1\cap L_5$, then $L_7\cup L_8$ must pass through $L_2\cap L_5$ and $L_{10}\cap L_5$ so that $L_5$ passes through 4 triple points.  Consequently, $L_7$ and $ L_8$ should also pass through $L_1\cap L_{10}$ and $L_1\cap L_6$ so that $L_1$ will pass through 4 triple points. However, we notice that there will be 4 triple points on $L_{10}$. That contradicts our assumption.
Therefore, we may assume that $L_7$ passes through $L_1\cap L_5$ but not $L_9\cap L_{10}$. Since there are 2 triple points on $L_7$, there should be one more triple point on $L_7$. Here are the possibilities:
\begin{enumerate}
\item Assume that
 $L_7$ passes $L_2\cap L_9$. Then $L_8$ passes through $L_1\cap L_6$ and $L_5\cap L_{10}$  by assumption. Therefore $t_2=2$ and $t_1=3$.
\item Assume that  $L_7$ passes $L_2\cap L_6$. Then $L_8$ must pass through 1 point of each of the sets $\{L_1\cap L_{10}, L_1\cap L_6\}$ and $\{L_2\cap L_5, L_5\cap L_{10}\}$. However, since none of those 4 points are on $L_9$, then $L_9$ will pass through only 2 triple points.
\item Assume that  $L_7$ passes through $L_9\cap L_6$. Then $L_8$ passes through $L_1\cap L_{10}$ and $L_2\cap L_5$. Therefore, $t_1=\frac{1}{2}$ and $t_2=\frac{1}{3}$.
\end{enumerate}

\item Assume that  $L_6$ passes through 4 triple points. %Then each of $L_7$ and $L_8$ should pass through a triple point on $L_2$ and a triple point on $L_5$.
We claim that $L_1\cap L_6$ can not be on $L_7\cup L_8$,  if $L_9\cap L_{10}$ is not on $L_7\cup L_8$.  Assume, in the contrary,  that $L_9\cap L_{10}$ is not on $L_7\cup L_8$, but $L_1\cap L_6$ is on $L_7\cup L_8$. We may assume that $L_7$ passes $L_1\cap L_6$. Then up to a lattice isomorphism, we may also assume that  $L_2\cap L_9$ is on $L_7$. Then $L_8$ must pass through one of $L_2\cap L_6$ and $L_9\cap L_6$ so that $L_6$ passes through 4 triple points. However, there are already 3 triple points on each $L_2$ and $L_9$ and we assume that each of them passes through exactly 3 triple points. We obtain a contradiction.

Assume that $L_1\cap L_6$ and  $L_9\cap L_{10}$ are both also on $L_7$. Then $L_8$ should pass through $L_1\cap L_5$ and $L_2\cap L_6$. By writing down the defining equation, we see that this can not be realizable.

Assume that $L_1\cap L_6$ is on $L_7$ and $L_9\cap L_{10}$ is on $L_8$. Since there are only 2 triple points on $L_8$, then 1 of $L_1$ and  $L_6$ will pass through at most 3 triple points.

Therefore, none of $L_9\cap L_{10}$ and $L_1\cap L_6$ should be on $L_7\cup L_8$.  Up to lattice isomorphisms, there are only  two possible cases:
\begin{enumerate}
\item  $L_7$ passes $L_1\cap L_{10}$ and $L_2\cap L_6$ and $L_8$ passes through $L_1\cap L_5$ and $L_6\cap L_9$. It follows that $t_1=\frac{1}{2}$ and $t_2=\frac{1}{2}$.

\item  $L_7$ passes $L_1\cap L_5$ and $L_2\cap L_6$ and $L_8$ passes through $L_1\cap L_{10}$ or $L_6\cap L_9$. It follows that $t_1t_2=1$ and $t_1+t_2=1$. The defining equation can be written as
 \begin{equation*}\label{eq:n_4=1-0-n_3=10-3}
\scriptstyle
xy(x-z)(y-z)(x-(1-t)z)(y-t z)(y+x)(y-t x)(y+t x-t z)(y+\frac{1}{1-t}x-z),
\end{equation*}
where $t=\frac{1\pm\sqrt{-3}}{2}$.
\end{enumerate}

\end{enumerate}

\end{proof}

\begin{thm}\label{prop:n4-1-3triples}
Let $\mathcal{A}$ be a non-reductive arrangement of $10$ lines with $1$ quadruple point and $9$ triple points such that  all triple points are on the $4$ lines passing through the quadruple point. Then the moduli space $\mathcal{M}_{\mathcal{A}}$ is irreducible.
\end{thm}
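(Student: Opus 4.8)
The plan is to first pin down the incidence structure completely by counting, then normalize coordinates and exhibit each resulting moduli space as an irreducible (rational) curve. Write $O=L_3\cap L_4\cap L_7\cap L_8$ for the quadruple point, call $L_3,L_4,L_7,L_8$ the \emph{pencil lines} and the remaining six the \emph{transversal lines}. Since any two pencil lines meet only at $O$, no triple point other than $O$ can lie on two pencil lines; hence each of the nine triple points lies on exactly one pencil line, and each transversal meets each pencil line in at most one triple point. First I would observe that, because every pencil line must carry at least two triple points (non-reductivity, together with the standing assumption $n_r=0$ for $r\ge 5$ and the uniqueness of the quadruple point) and the nine triple points distribute among the four pencil lines, the distribution on $(L_3,L_4,L_7,L_8)$ is $(3,2,2,2)$ up to relabelling; say $L_3$ carries three. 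Counting transversal--triple-point incidences gives $2\cdot 9=18$ spread over six transversals each forced to meet at least three triple points, so every transversal passes through \emph{exactly} three triple points.

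Next I would extract the fine combinatorics. Grouping the triple points as $A$ (the three on $L_3$) and $B,C,D$ (the pairs on $L_4,L_7,L_8$), each transversal hits one point in exactly three of the four groups. Writing $n_{ABC},n_{ABD},n_{ACD},n_{BCD}$ for the numbers of transversals of each type and solving the linear incidence system (group $A$ receives $6$ incidences, each of $B,C,D$ receives $4$, and there are $6$ transversals in total) yields $n_{ABC}=n_{ABD}=n_{ACD}=2$ and $n_{BCD}=0$. Thus every transversal passes through a triple point of $L_3$, the six transversals split into three pairs through the three points of $A$, and each pair distributes its two remaining incidences among $B,C,D$. A short case analysis over how the three pairs attach to $B,C,D$ (up to the symmetry permuting $L_4,L_7,L_8$ and the points within each group) then enumerates the finitely many lattice-isomorphism classes.

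For each class I would fix coordinates with $PGL(3,\mathbb{C})$: place $O=[0:0:1]$, take $L_3=\{y=0\}$ and $L_4=\{x=0\}$, so $L_7=\{y=s_1 x\}$ and $L_8=\{y=s_2 x\}$, record the three points of $A$ on $L_3$, and write the six transversals through them with affine parameters, exactly as in the computations of the preceding theorems. Imposing the prescribed collinearities (that the two further triple points of each transversal lie on the assigned pencil lines) turns into a polynomial system in the remaining parameters. A dimension count ($20$ for ten lines in $(\mathbb{P}^2)^*$, minus $8$ for $PGL(3,\mathbb{C})$, minus $2$ for the quadruple point and $9$ for the triple points, giving $1$) predicts a one-dimensional moduli space, so in each class the equations should cut out a curve, and the task reduces to showing this curve is irreducible. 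I expect that after eliminating all but one or two parameters the constraints become either a rational parametrization by a single $t$ or a single irreducible polynomial equation in two parameters, whence $\mathcal{M}_{\mathcal{A}}$ is irreducible.

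The main obstacle is precisely this last step: verifying that each class's polynomial system defines an \emph{irreducible} curve and, crucially, that it does not split into Galois-conjugate components the way the earlier cases involving $\pm\sqrt{-1}$ or $\pm\sqrt{5}$ did (which only gave $\mathcal{M}^c_{\mathcal{A}}$ irreducible). Here, because nine is the smallest triple-point count compatible with non-reductivity, the system is the least over-determined and retains a free parameter, so each realizable class is a genuine one-parameter family defined over $\mathbb{Q}$; checking irreducibility of the resulting defining polynomial, and discarding the non-realizable classes where the system is inconsistent, is the only substantive work, and it is the routine-but-careful elimination I would carry out class by class.
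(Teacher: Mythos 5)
Your combinatorial reduction is correct and matches the first half of the paper's proof, in a somewhat cleaner form: the paper derives the same $(3,2,2,2)$ distribution on the pencil lines and, via the count $a+b=6$, $4a+3b=18$, that each of the six transversals carries exactly three triple points; your further deduction that $n_{BCD}=0$, so every transversal meets a triple point of $L_3$ and the six transversals fall into three pairs through the three points on $L_3$, is only implicit in the paper (it appears there as the ad hoc arguments forcing $L_2\cap L_7$, $L_2\cap L_8$, $L_3\cap L_9$, $L_3\cap L_{10}$ to be triple points). Up to this point the two arguments coincide.

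The genuine gap is that you stop exactly where the proof begins. The conclusion that $\mathcal{M}_{\mathcal{A}}$ (and not merely $\mathcal{M}^c_{\mathcal{A}}$) is irreducible is not a consequence of the dimension count, nor of the fact that each lattice class yields a one-parameter system defined over $\mathbb{Q}$: a one-dimensional variety defined over $\mathbb{Q}$ can perfectly well split into conjugate components (e.g.\ $x^2-2y^2=0$), and this paper itself shows the outcome cannot be predicted by such heuristics. In the immediately preceding case, Theorem \ref{prop:n4=1-0-n3=10} (identical hypotheses except $10$ triple points), the moduli spaces degenerate to pairs of complex-conjugate points, so only $\mathcal{M}^c_{\mathcal{A}}$ is irreducible; and in other lattices of Sections 4 and 5, e.g.\ \eqref{n_4=2-3}, \eqref{n_4=2-4}, \eqref{n_4=2-7}, \eqref{eq:n_4=1-3}, the moduli space remains reducible even after the conjugation quotient. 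So the theorem's actual content is precisely the finite verification you defer: (i) enumerate the lattice-isomorphism classes determined by how the three pairs of transversals attach to the triple points on $L_4$, $L_7$, $L_8$ — the paper's Cases 1--3 with nested subcases, several of which turn out non-realizable or lattice-isomorphic to one another under explicit permutations such as $(1,2)(5,6)(7,8)(9,10)$ — and (ii) for each realizable class, normalize coordinates, eliminate, and check that the resulting parameter curve is irreducible, as the paper does explicitly in the analogous one-dimensional situation of Case 3 of Section 5.2, where irreducibility of polynomials like $t_1t_2^2-t_1^2t_2-2t_1t_2+t_1^2+t_2$ must be verified one by one. Until (i) and (ii) are carried out, what you have is a correct reduction plus a prediction of the answer; since that prediction fails in the neighboring theorems, the omission is not a routine computation that can be waved through, but the substance of the proof.
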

\begin{proof}
Let $L_3\cap L_4\cap L_7\cap L_8$ be the quadruple point. By the assumption, we may assume that $L_3$ passes through 3 triple points and each of $L_4$, $L_7$ and $L_8$ passes through 2 triple points. Similar to the proof of Propositioin \ref{prop:n4=1-0-n3=10}, by our assumption,  each of the other 6 lines, $L_1$, $L_2$, $L_5$, $L_6$, $L_9$ and $L_{10}$, should pass exactly 3 triple points. Let $a$ and $b$ be the numbers of lines in $\{ L_1, L_2, L_5, L_6, L_9, L_{10}\}$ passing through 4 and 3 triple points, respectively.  Then $a$ and $b$ should satisfy the following system of equations:
\begin{align*}a+b&=6\\ 4a+3b&=18.\end{align*}
It follows that $a=0$ and $b=6$.

 Let $L_1\cap L_2$ be 1 of the triple points on $L_3$. Then there are 7 triple points on $L_1\cup L_2\cup L_3$. We may assume $L_4\cap L_5\cap L_6$ is a triple point apart from $L_1\cup L_2\cup L_3$. The other triple point apart from $L_1\cup L_2\cup L_3$ must be in $(L_9\cup L_{10})\cap (L_4\cup L_7\cup L_8)$.  Since there are only 2 triple points on $L_4$, we may assume that $L_2\cap L_4$ is not a triple point. Then $L_2\cap L_7$ and $L_2\cap L_8$ should be triple points so that $L_2$ will pass through 3 triple points and all triple points are on $L_3\cup L_4\cup L_7\cup L_8$. Similarly, $L_3\cap L_9$ and $L_3\cap L_{10}$ should be triple points so that $L_3$ will pass through 3 triple points.  Moreover, we may assume that $L_3\cap L_9$ and $L_3\cap L_{10}$ are in $L_5$ and $L_6$, respectively.

\begin{enumerate}
\item\label{1-irr} $L_2\cap L_7$ and $L_2\cap L_8$ are not on $L_5\cup L_6$. We may assume that $L_2\cap L_7\cap L_9$ and $L_2\cap L_8\cap L_{10}$ are triple points.  Recall  that, by the assumption,  there is another triple point in $(L_9\cup L_{10})\cap ((L_4\cup L_7\cup L_8)\setminus(L_1\cup L_2\cup L_3))$. Then $L_4\cap L_9\cap L_{10}$ is a triple point.  By the assumption, $(L_7\cup L_8)\cap (L_5\cup L_6)\cap L_1$ should be triple points. So there are two choices: $L_1\cap L_5\in L_7$ and $L_1\cap L_6\in L_8$ (see Figure \ref{n_4=1-0-n_3=9-1} (a)), or $L_1\cap L_6\in L_7$ and $L_1\cap L_5\in L_8$ (see Figure \ref{n_4=1-0-n_3=9-1} (b)). In each case, the moduli space $\mathcal{M}_{\mathcal{A}}$ is irreducible.
\begin{figure}[htbp]
\centering
\subfigure[]{
\begin{tikzpicture}[domain=-2:4, scale=0.6]\tikzstyle{every node}=[font=\footnotesize]
\draw[domain=-1:4] plot (0, \x) node[above]{$L_4$};
\draw[domain=-1:4] plot (1, \x) node[above]{$L_5$};
\draw[domain=-1:4] plot (2.5, \x) node[above]{$L_6$};

\draw[domain=3:-1] plot (\x, 0) node[left]{$L_3$};
\draw[domain=3:-1] plot (\x, 1) node[left]{$L_2$};
\draw[domain=3:-1] plot (\x, 1.5) node[left]{$L_1$};
\draw[domain=2.7:-0.5] plot (\x, 1.5*\x) node[below]{$L_7$};
\draw[domain=-1:3] plot (\x, 3/5*\x) node[above]{$L_8$};

\draw[domain=-0.33:1.33] plot (\x, {-3*\x+3}) node[right]{$L_9$};
\draw[domain=3:-5/6] plot (\x, {-6/5*\x+3}) node[left]{$L_{10}$};
\end{tikzpicture}}
\hspace{2cm}
\subfigure[]{
\begin{tikzpicture}[domain=-2:4, scale=0.6]\tikzstyle{every node}=[font=\footnotesize]
\draw[domain=-1:4] plot (0, \x) node[above]{$L_4$};
\draw[domain=-1:4] plot (1, \x) node[above]{$L_5$};
\draw[domain=-1:4] plot (-1, \x) node[above]{$L_6$};

\draw[domain=3:-3] plot (\x, 0) node[left]{$L_3$};
\draw[domain=3:-3] plot (\x, 1) node[left]{$L_2$};
\draw[domain=3:-3] plot (\x, 2) node[left]{$L_1$};
\draw[domain=-0.5:1.5] plot (\x, 2*\x) node[above right]{$L_7$};
\draw[domain=0.5:-1.5] plot (\x, -2*\x) node[above left]{$L_8$};

\draw[domain=-3:2] plot (\x, {-2/3*(\x-1)}) node[right]{$L_9$};
\draw[domain=3:-2] plot (\x, {2/3*(\x+1)}) node[left]{$L_{10}$};
\end{tikzpicture}
}
\caption{\label{n_4=1-0-n_3=9-1}}
\end{figure}

\item Both $L_2\cap L_7$ and $L_2\cap L_8$ are on $L_5\cup L_6$. We may assume that $L_2 \cap L_7\cap L_5$ and $L_2\cap L_8\cap L_6$ are triple points. Then $L_1\cap (L_7\cup L_8)\cap (L_9\cup L_{10})$ must be triple points so that $L_1$ passes through 3 triple points. It follows that  $L_9\cap L_{10}\cap L_4$ is a triple point. By switching labels of $L_1$ and $L_2$ and others accordingly,  the arrangements are lattice isomorphic to the arrangements in the case \ref{1-irr}. Hence the moduli space is irreducible.

\item Only one of $L_2\cap L_7$ and $L_2\cap L_8$ is on $L_5\cup L_6$. We may assume that $L_2 \cap L_7\cap L_5$ is a triple point. Let us consider the possible triple points on $L_7$ and $L_8$.
\begin{enumerate}
\item $L_7$ also passes through $L_1\cap L_6$. Then $L_8\cap (L_1\cup L_2)\cap (L_9\cup L_{10})$ and $L_4\cap L_9\cap L_{10}$ must be triple points by our assumption. Then either $L_8\cap L_2\cap L_9$ or $L_8\cap L_2\cap L_{10}$ is a triple point and correspondingly, $L_8\cap L_1\cap L_{10}$ or $L_8\cap L_1\cap L_9$ is a triple point. One can check that the first choice can not be realizable and the moduli space $\mathcal{M}_{\mathcal{A}}$ in the second choice (see Figure \ref{n_4=1-0-n_3=9-1-3-1}) is irreducible.
\begin{figure}[htbp]
\centering
\begin{tikzpicture}[domain=-2:4, scale=0.6]\tikzstyle{every node}=[font=\footnotesize]
\draw[domain=-1:3] plot (0, \x) node[above]{$L_4$};
\draw[domain=-1:3] plot (1, \x) node[above]{$L_5$};
\draw[domain=-1:3] plot (2, \x) node[above]{$L_6$};

\draw[domain=3:-3] plot (\x, 0) node[left]{$L_3$};
\draw[domain=3:-3] plot (\x, 1) node[left]{$L_2$};
\draw[domain=3:-3] plot (\x, 2) node[left]{$L_1$};
\draw[domain=-1:3] plot (\x, \x) node[above]{$L_7$};
\draw[domain=0.8:-3] plot (\x, -\x) node[above]{$L_8$};

\draw[domain=2.2:-3] plot (\x, {-2/3*(\x-1)}) node[left]{$L_9$};
\draw[domain=-2.8:3] plot (\x, {-1/3*(\x-2)}) node[below]{$L_{10}$};
\end{tikzpicture}
\caption{\label{n_4=1-0-n_3=9-1-3-1}}
\end{figure}
\item $L_7$ does not pass through $L_1\cap L_6$, but $L_8$ passes through $L_1\cap L_6$. Then each of $L_7$ and $L_8$ should pass through 1 more triple point.
\begin{itemize}
\item Both $L_1\cap L_7$ and $L_2\cap L_8$ should be on $L_9\cup L_{10}$. Then $L_4\cap L_9\cap L_{10}$ should be a triple point. Then there are two possibilities: $L_8\cap L_2\cap L_9$ and $L_7\cap L_1\cap L_{10}$ are triple points;  $L_8\cap L_2\cap L_{10}$ and $L_7\cap L_1\cap L_9$ are triple points.  In the first case, $\mathcal{M}_\mathcal{A}$ is irreducible. In the second case, the arrangement can not be realizable.
\item One of $L_1\cap L_7$ and $L_2\cap L_8$ is not on $L_9\cup L_{10}$. Up to a permutation (1, 2)(5, 6)(7, 8)(9 ,10), we may assume that $L_1\cap L_7$ is on $L_9\cap L_{10}$ but $L_2\cap L_8$ is not on $L_9\cap L_{10}$. Then $L_8\cap L_9\cap L_{10}$ must be a triple point. If $L_1\cap L_7$ is on $L_9$, then $L_{10}$ must pass through $L_2\cap L_4$. If $L_1\cap L_7$ is on $L_{10}$, then $L_9$ must pass through $L_2\cap L_4$. It is not difficult to check that the moduli space $\mathcal{M}_\mathcal{A}$ is irreducible.
\end{itemize}
\item Neither $L_7$ nor $L_8$ passes through $L_1\cap L_6$. Then $L_{10}$ should pass through 1 of $L_8\cap (L_1\cup L_2)$ so that $L_8$ will pass through 2 triple points.
\begin{itemize}
\item $L_{10}$ passes through $L_2\cap L_8$. Then $L_9$ passes through $L_6\cap L_8$ or $L_1\cap L_8$.
\begin{itemize}
\item $L_9$ passes through $L_6\cap L_8$. Then $L_{10}$ passes through $L_1\cap L_4$ or $L_1\cap L_7$, correspondingly $L_9$ passes through $L_1\cap L_7$ or $L_1\cap L_4$. In both case, the moduli spaces are irreducible.
\item $L_9$ passes through $L_1\cap L_8$. Then $L_6\cap L_7$ must be on $L_9$. Consequently, $L_{10}$ passes through $L_1\cap L_4$. Again, the moduli space is irreducible.
\end{itemize}
\item $L_{10}$ passes through $L_1\cap L_8$. Then $L_9$ passes through $L_6\cap L_8$ or $L_2\cap L_8$.
\begin{itemize}
\item $L_9$ passes through $L_6\cap L_8$. Then $L_9$ should also pass through $L_1\cap L_7$ so that $L_7$ passes through 2 triple points. Consequently, $L_{10}$ passes through $L_2\cap L_4$. However, this can not be realizable.
\item $L_9$ passes through $L_2\cap L_8$. Then $L_9$ must pass through $L_6\cap L_7$ so that each of $L_6$ and $L_7$ passes through 2 triple points. Consequently, $L_1$ will pass through only 2 triple points $L_1\cap L_2 \cap L_3$ and $L_1\cap L_8\cap L_{10}$. However, we assume that the arrangement $\mathcal{A}$ is non-reductive.
\end{itemize}
\end{itemize}
\end{enumerate}
\end{enumerate}

Therefore, we conclude that the moduli spaces of arrangement under the assumption are irreducible.

\end{proof}

\begin{prop}
Let $\mathcal{A}$ be an arrangement of $10$ lines with $1$ quadruple point and $8$ triple points so that all triple points are on the $4$ lines passing through the quadruple point. Then $\mathcal{A}$ is not non-reductive.
\end{prop}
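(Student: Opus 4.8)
The plan is to settle this by a single double-counting of incidences, with no coordinates or case analysis. First I would fix notation in the style of the previous proofs: suppose the quadruple point is $L_3\cap L_4\cap L_7\cap L_8$, call these four lines the \emph{pencil lines}, and call the remaining six lines $L_1,L_2,L_5,L_6,L_9,L_{10}$ the \emph{outer lines}. Since the four pencil lines are exactly the lines through the quadruple point, no outer line passes through it (otherwise that point would have multiplicity $5$). As the only multiple points of $\mathcal{A}$ are the quadruple point and the eight triple points, each outer line can collect multiple points \emph{only} from among the eight triple points.

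The key structural observation, and the one delicate point in the argument, is that every triple point lies on \emph{exactly one} pencil line. Indeed, by hypothesis each triple point lies on $L_3\cup L_4\cup L_7\cup L_8$, so it lies on at least one pencil line; but two distinct pencil lines meet only at their unique common point, namely the quadruple point, which is not a triple point, so no triple point can lie on two pencil lines. Consequently each of the eight triple points lies on exactly one pencil line, and hence — since a triple point lies on three lines in all — on exactly two of the six outer lines.

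I would then count the incidences between triple points and outer lines in two ways. Each triple point contributes exactly $2$ such incidences, so the total is $2\cdot 8=16$. On the other hand, if $\mathcal{A}$ were non-reductive then every outer line would pass through at least $3$ multiple points, all of which are triple points, so the six outer lines would account for at least $3\cdot 6=18$ incidences. Since $18>16$, this is impossible; some outer line therefore meets at most two triple points and hence at most two multiple points, contradicting non-reductivity. The argument has essentially no computational obstacle: everything reduces to the collinearity remark that forces each triple point onto a single pencil line, which is precisely where the hypothesis that all triple points lie on the pencil of the quadruple point is used.
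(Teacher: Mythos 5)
Your proof is correct and uses essentially the same argument as the paper: both double-count incidences between the eight triple points (each lying on exactly one pencil line, hence exactly two outer lines, giving $16$ incidences) and the six lines off the quadruple point. The paper packages the count as the system $a+b=6$, $4a+3b=16$ having no non-negative solution, while you use the direct inequality $3\cdot 6=18>16$; this is only a cosmetic difference.
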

\begin{proof}$L_1\cap L_2\cap L_3\cap L_4$ be the quadruple point. By assumption, there are 8 triple points.
Let $a$ and $b$ be the number of lines in $\{ L_5, L_6, \dots, L_{10}\}$ passing through 4 and 3 triple points respectively. If each of the 6 lines  passes at least  3 triple points,  then $a$ and $b$ should satisfy the following system of equations:
\begin{align*}a+b&=6\\ 4a+3b&=16.\end{align*}
However, there is no non-negative solution. It follows that at least one of the 6 lines passes at most two triple points.
\end{proof}

By reviewing this subsection, we can make the following conclusion.
\begin{cor}Let $\mathcal{A}$ be an arrangement of $10$ projective lines such that $n_4=1$ and $n_r=0$ for $r\geq 5$. Assume that all triple points are on the lines passing through the quadruple point. If $\mathcal{A}$ does not contain a Falk-Sturmfels arrangement, then the fundamental group $\pi_1(M(\mathcal{A}))$ is determined by the intersection lattice $L(\mathcal{A})$.
\end{cor}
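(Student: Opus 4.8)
The plan is to reduce everything to the irreducibility of $\mathcal{M}_{\mathcal{A}}$ or of its complex-conjugate quotient $\mathcal{M}^c_{\mathcal{A}}$, since by Randell's Lattice-Isotopy Theorem \cite{Randell1989} together with Theorem~3.9 of \cite{Cohen1997} the isomorphism type of $\pi_1(M(\mathcal{A}))$ is constant on each irreducible component of $\mathcal{M}_{\mathcal{A}}$ and on any pair of complex-conjugate components. Hence it suffices to exhibit, under the stated hypotheses, that one of $\mathcal{M}_{\mathcal{A}}$, $\mathcal{M}^c_{\mathcal{A}}$ is irreducible, and I would split the argument into the non-reductive and the reductive case.

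First I would dispose of the non-reductive case, which is exactly the content assembled in this subsection. Any line missing the quadruple point meets it in no multiple point, so all of its multiple points are triple points; moreover, since two distinct pencil lines meet only at the quadruple point, every triple point lies on exactly one of the four pencil lines and on two of the six remaining lines. Counting incidences of triple points with the non-pencil lines therefore gives $2n_3 \geq 6\cdot 3 = 18$, so $n_3 \geq 9$; combined with Lemma \ref{prop:quadruple+3-3triples} ($n_3 \leq 11$), this forces $n_3 \in \{9,10,11\}$. In each of these three cases the corresponding theorem of this subsection applies: for $n_3 = 11$ and for $n_3 = 9$ (Theorem \ref{prop:n4-1-3triples}) the moduli space $\mathcal{M}_{\mathcal{A}}$ is irreducible, while for $n_3 = 10$ (Theorem \ref{prop:n4=1-0-n3=10}) the quotient $\mathcal{M}^c_{\mathcal{A}}$ is irreducible. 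Observe that the Falk-Sturmfels hypothesis is not needed at all here.

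The substance lies in the reductive case. If $\mathcal{A}$ is reductive, some line $L$ passes through at most two multiple points; set $\mathcal{A}' = \mathcal{A}\setminus\{L\}$, an arrangement of $9$ lines. Since $\mathcal{A}' \subseteq \mathcal{A}$ and $\mathcal{A}$ contains no Falk-Sturmfels arrangement, and since a Falk-Sturmfels arrangement has exactly $9$ lines, $\mathcal{A}'$ cannot be lattice-isomorphic to a Falk-Sturmfels arrangement. I would then invoke the classification of arrangements of $9$ lines in \cite{Ye2011} in the form: for every $9$-line arrangement not lattice-isomorphic to a Falk-Sturmfels arrangement, $\mathcal{M}_{\mathcal{A}'}$ or $\mathcal{M}^c_{\mathcal{A}'}$ is irreducible. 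Finally, Theorem \ref{lemma:extend-irreduciblity} transfers irreducibility of $\mathcal{M}_{\mathcal{A}'}$ to $\mathcal{M}_{\mathcal{A}}$; the same fibering argument, applied to the conjugation quotients (the forgetful map deleting $L$ has connected fibres, namely the Zariski-open set of admissible positions of $L$, and complex conjugation commutes with deleting $L$), transfers irreducibility of $\mathcal{M}^c_{\mathcal{A}'}$ to $\mathcal{M}^c_{\mathcal{A}}$. Either way one of $\mathcal{M}_{\mathcal{A}}$, $\mathcal{M}^c_{\mathcal{A}}$ is irreducible, which completes the proof.

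The main obstacle is precisely the reductive case, which rests on two inputs that must be quoted with care. The first is that the \emph{only} $9$-line obstruction to irreducibility after quotienting by conjugation is the Falk-Sturmfels arrangement; this is the full strength of the $9$-line classification of \cite{Ye2011} and is the sole reason the Falk-Sturmfels hypothesis appears in the statement. The second is the complex-conjugate analogue of the extension Theorem \ref{lemma:extend-irreduciblity}, which is not stated in that cited form and would require the remark that the fibration of $\mathcal{M}_{\mathcal{A}}$ over $\mathcal{M}_{\mathcal{A}'}$ descends to the conjugation quotients with connected fibres.
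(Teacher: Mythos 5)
Your proposal is correct and is essentially the proof the paper intends (the paper itself gives none beyond ``by reviewing this subsection''): the non-reductive case follows from Lemma~\ref{prop:quadruple+3-3triples} and the three theorems of this subsection exactly as you argue, and the reductive case is the standard deletion argument via Theorem~\ref{lemma:extend-irreduciblity} combined with the $9$-line classification of \cite{Ye2011}, which is precisely where the Falk--Sturmfels hypothesis enters. Your two flagged caveats---the need for a conjugation-equivariant version of Theorem~\ref{lemma:extend-irreduciblity} and the precise form of the classification in \cite{Ye2011}---are real points of care that the paper leaves implicit, not deviations from its approach.
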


\bibliographystyle{alpha}

%\bibliography{bibliography}

\begin{thebibliography}{ABCRCAMB05}

\bibitem[AB94]{Artal-Bartolo1994}
E. Artal Bartolo, 
\newblock Sur les couples de {Z}ariski,
\newblock { J. Algebraic Geom.} 3 (1994) 223-247.

\bibitem[ABCRCAMB05]{ArtalBartolo2005}
E. Artal Bartolo, J. Carmona Ruber, J.~I.
  Cogolludo-Agust{\'{\i}}n,  M. Marco Buzun{\'a}riz, 
\newblock Topology and combinatorics of real line arrangements,
\newblock { Compos. Math.} 141 (2005) 1578-1588.

\bibitem[CS97]{Cohen1997}
D.~C. Cohen, A.~I. Suciu, 
\newblock The braid monodromy of plane algebraic curves and hyperplane
  arrangements,
\newblock {Comment. Math. Helv.} 72 (1997) 285-315.

\bibitem[Fan97]{Fan1997}
K. Fan, 
\newblock Direct product of free groups as the fundamental group of the
  complement of a union of lines,
\newblock { Michigan Math. J.} 44 (1997) 283-291.

\bibitem[GTV03]{Garber2003}
D. Garber, M. Teicher, U. Vishne,  
\newblock $\pi_1$-classification of real arrangements with up to eight lines,
\newblock { Topology} 42 (2003) 265-289.

\bibitem[Hir86]{Hirzebruch1986}
F. Hirzebruch, 
\newblock Singularities of algebraic surfaces and characteristic numbers,
\newblock in: { The {L}efschetz centennial conference, {P}art {I} ({M}exico
  {C}ity, 1984)}, volume~58 of { Contemp. Math.}, Amer.
  Math. Soc., Providence, RI, 1986, p.p. 141-155.

\bibitem[JY94]{Jiang1994}
T. Jiang, S. S.-T. Yau, 
\newblock Diffeomorphic types of the complements of arrangements of
  hyperplanes,
\newblock { Compos. Math.} 92 (1994) 133-155.

\bibitem[JY98]{Jiang1998}
T. Jiang, S. S.-T. Yau, 
\newblock Intersection lattices and topological structures of complements of
  arrangements in $\mathbb{CP}^2$,
\newblock { Ann. Scuola Norm. Sup. Pisa Cl. Sci. (4)} 26 (1998) 357-381.

\bibitem[NY10]{Nazir2010}
S. Nazir, M. Yoshinaga, 
\newblock On the connectivity of the realization spaces of line arrangements,
\newblock { arXiv:1009.0202v3},  2010.


\bibitem[Ran89]{Randell1989}
R. Randell, 
\newblock Lattice-isotopic arrangements are topologically isomorphic,
\newblock { Proc. Amer. Math. Soc.} 107 (1989) 555-559.

\bibitem[Ryb11]{Rybnikov2011}
G. Rybnikov, 
\newblock On the fundamental group of the complement of a complex hyperplane
  arrangement,
\newblock {Funct. Anal. Appl.} 45 (2011) 137-148.


\bibitem[WY05]{Wang2005}
S. Wang, S. S.-T. Yau, 
\newblock Rigidity of differentiable structure for new class of line
  arrangements,
\newblock { Comm. Anal. Geom.} 13 (2005) 1057-1075.

\bibitem[WY07]{Wang2007a}
S. Wang, S. S.-T. Yau, 
\newblock Diffeomorphic types of the complements of arrangements in
  $\mathbb{CP}^3$ i : point arrangements,
\newblock {J. Math. Soc. Japan} 59 (2007) 423-447.

\bibitem[WY08]{Wang2008}
S. Wang,  S. S.-T. Yau, 
\newblock The diffeomorphic types of the complements of arrangements in
  {$\mathbb{CP}^3$}. {II},
\newblock {Sci. China Ser. A} 51 (2008) 785-802.

\bibitem[Ye11]{Ye2011}
F. Ye, 
\newblock Classification of moduli spaces of arrangements of 9 projective
  lines,
\newblock arXiv:1112.4306, 2011.

\bibitem[YY07]{Yau2007}
S. S.-T. Yau, F. Ye, 
\newblock Rigidity of differential structure for a new class of arrangements in
  $\mathbb{CP}^4$,
\newblock preprint, 2007.

\bibitem[YY09]{Yau2009}
S. S.-T. Yau, F. Ye, 
\newblock Diffeomorphic types of complements of nice point arrangements in
  $\mathbb{CP}^l$,
\newblock {Sci. China Ser. A} 52 (2009) 2774-2791.

\end{thebibliography}

{\noindent
Meirav Amram\\ Emmy Noether Research Institute for Mathematics, Bar-Ilan University, Ramat-Gan, 52900, Israel, and\\ Shamoon College of Engineering, Bialik/Basel Sts., Beer-Sheva 84100, Israel\\
meirav@macs.biu.ac.il,~  meiravt@sce.ac.il\\
}

{\noindent Mina Teicher\\
Emmy Noether Research Institute for Mathematics, Bar-Ilan University, Ramat Gan, 52900, Israel, and\\
Institute for Advanced Study,  Einstein Drive, Princeton, NJ., 08540, USA\\
 teicher@macs.biu.ac.il\\
}

{\noindent Fei Ye\\
Emmy Noether Research Institute for Mathematics, Bar-Ilan University, Ramat Gan, 52900, Israel\\
 {\it Current address}: Department of Mathematics, The University of Hong Kong, Pokfulam, Hong Kong\\
  fye@maths.hku.hk}

\end{document}